\documentclass[10pt]{article}
\usepackage[T1]{fontenc}
\usepackage[utf8]{inputenc}
\usepackage[tt=false]{libertine}

\usepackage{amsmath,amssymb,amsthm,mathtools}

\usepackage[varbb]{newpxmath}

\usepackage{bm}

\usepackage[varbb]{newpxmath}

\usepackage{booktabs}
\usepackage{enumitem}
\usepackage{natbib}
\usepackage[hidelinks]{hyperref}
\usepackage{microtype}
\usepackage{xcolor}
\usepackage{multirow}

\mathtoolsset{showonlyrefs=true}
\numberwithin{equation}{section}

\theoremstyle{plain}

\newtheorem{theorem}{Theorem}[section]
\newtheorem{lemma}[theorem]{Lemma}
\newtheorem{proposition}[theorem]{Proposition}
\newtheorem{corollary}[theorem]{Corollary}
\theoremstyle{definition}

\newtheorem{assumption}[theorem]{Assumption}
\newtheorem{remark}[theorem]{Remark}

\newcommand{\R}{\mathbb{R}}

\newcommand{\E}{\mathbb{E}}
\newcommand{\Prob}{\mathbb{P}}
\newcommand{\dd}{\mathrm{d}}
\newcommand{\KL}{\mathrm{KL}}
\newcommand{\TV}{\mathrm{TV}}

\newcommand{\NN}{\mathrm{NN}}

\newcommand{\1}{\mathbf{1}}
\newcommand{\norm}[1]{\left\lVert #1\right\rVert}
\newcommand{\abs}[1]{\left\lvert #1\right\rvert}

\newcommand{\cX}{\mathcal{X}}

\newcommand{\cK}{\mathcal{K}}
\newcommand{\cA}{\mathcal{A}}

\newcommand{\cH}{\mathcal{H}}

\newcommand{\cP}{\mathcal{P}}

\newcommand{\var}{\mathrm{Var}}
\newcommand{\ve}{\varepsilon}
\newcommand{\wh}{\widehat}

\newcommand{\V}{V}
\newcommand{\requ}{\sigma_{\mathrm{ReQU}}}
\usepackage{authblk}

\title{Nonparametric inference for density-dependent McKean--Vlasov diffusions}
\author[1]{Denis Belomestny\thanks{Email: denis.belomestny@uni-due.de}}
\author[1]{Ekaterina Morozova\thanks{Corresponding author. Email: ekaterina.morozova@uni-due.de}}

\affil[1]{Duisburg-Essen University, 
Essen, Germany}

\date{}

\begin{document}
\maketitle

\begin{abstract}
The present research is devoted to the nonparametric estimation of a density-dependent drift coefficient in a multivariate McKean--Vlasov diffusion from independent observations at a common time, as well as the stationary density.
Under certain assumptions on the (known) potential, we reduce the problem to the one-dimensional one and construct a sieve maximum-likelihood estimator based on sparse ReQU neural networks subject to structural and H\"older constraints. 
Using the endpoint-adapted graded approximation, we achieve the rate of 
\(
\left(b_n\log n/n\right)^{2(\beta+1)/(2\beta+3)}
\)
for the Kullback-Leibler divergence between the true and estimated stationary densities, with \(b_n\) being at most a logarithmic factor. Similarly, it is shown that the constructed estimator for the drift coefficient converges to the true one at the rate of 
\(
\left(b_n\log n/n\right)^{\beta/(2\beta+3)}
\)
in the \(L^2\)-metric. A matching Assouad lower bound proves minimax optimality of this bound up to logarithmic factors.

\medskip
\noindent\textbf{Keywords:} McKean--Vlasov SDEs, non-parametric estimation, deep neural networks, invariant measure, maximum likelihood. \\
\noindent\textbf{MSC 2020:} 62G05, 62G20, 60H10, 60J60.
\end{abstract}

\section{Introduction}

We consider the nonparametric recovery of a density-dependent drift coefficient from independent particles observed once at a common time. The dynamics are \(d\)-dimensional and nonlinear in the unknown density, yet the available data are only cross-sectional. The central question is whether --- and at what rates --- the density-dependent function \(\Xi\) and the induced stationary density \(\pi_\Xi\) can be reconstructed from this stationary or near-stationary experiment.

Let \(p_t\) denote the Lebesgue density of \(X_t\). We study the McKean–Vlasov stochastic differential equation
\begin{equation}\label{eq:model-general}
  \dd X_t=-\Xi(p_t(X_t))\nabla \V(X_t)\,\dd t+\sqrt{2}\,\dd W_t,
  \qquad X_0\sim p_0,
\end{equation}
where \(W\) is a standard Brownian motion in \(\R^d\), \(\V:\R^d\to\R\) is a known confining potential, and \(\Xi:\R_+\to\R_+\) is the interaction function which is unknown. The corresponding nonlinear Fokker--Planck equation is 
\(
\partial_t p_t-\Delta p_t- \nabla\!\cdot\!\left(\nabla \V(x)\Xi(p_t(x))p_t(x)\right)=0.
\)
Unlike the usual McKean--Vlasov models involving moments, kernels, or cumulative distributions, the drift in~\eqref{eq:model-general} depends directly on the pointwise density. This dependence is not continuous under weak perturbations of the law and leads to a singular statistical problem.

Since, as shown in Section~\ref{sec:model}, 
the zero-flux stationary density satisfies 
\(\pi_\xi(x)=q_\xi(V(x))\)
with \(q_\xi\) solving~\eqref{eq:q-ode-general}, 
the likelihood depends on an observation 
\(X\) only through the scalar energy \(V(X)\). Estimating \(\Xi\) therefore becomes a one-dimensional nonlinear inverse problem, which, however, is ill-posed in the low-density regime corresponding to the spatial tails, and the coefficient is identifiable only over the range attained by the stationary density. These features determine the achievable estimation rates and require approximation methods adapted to the tail regime.

Most existing statistical results for McKean-Vlasov models concern smoother functionals of the law or observations of trajectories and interacting particle systems. The closest likelihood-based approach is that of~\cite{belomestny-orlova-2025}, where the interaction depends on the cumulative distribution function. The pointwise density dependence in~\eqref{eq:model-general} instead requires endpoint-adapted approximation and a separate inverse-stability analysis. The analytic properties of the model for the Ornstein-Uhlenbeck potential 
\begin{equation}\label{eq:ou-potential}
  \V(x)=1+\norm{x}_2^2, \qquad \nabla \V(x)=2x,
\end{equation}
were studied by~\cite{belomestny-morozova-ou}; here we develop the corresponding nonparametric statistical theory for a broader class of confining potentials. Our main contributions are as follows.

\begin{enumerate}
\item
We establish the representation~\eqref{eq:pi-general} 
for zero-flux stationary densities and reduce the problem from \(d\)-dimensional to a one-dimensional one.
\item
We construct a constrained ReQU sieve maximum-likelihood estimator and develop an endpoint-adapted approximation yielding
\[
\KL(\pi_\xi\Vert\pi_{\widehat\Xi_n})
=O_{\mathbb{P}}
\left(
\left(\frac{b_n\log n}{n}
\right)^{
\frac{2(\beta+1)}{2\beta+3}
}
\right),
\quad 
\|\widehat\Xi_n-\Xi\|_{L^2(I)}
=
O_{\mathbb P}
\left(
\left(\frac{b_n\log n}{n}\right)^{
\frac{\beta}{2\beta+3}
}
\right)
\]
for every \(I\Subset(0,\max_{x\in\R^d}\pi_\xi(x))\),
where \(b_n=O(1)\) in the compact (i.e., bounded domain) setting and \(b_n\lesssim\log n\) in the noncompact one.
\item 
We prove a matching Assouad lower bound of order
\(
n^{-2\beta/(2\beta+3)}
\)
for the squared \(L^2(I)\)-risk. The construction is carried out in the primitive coordinate \(\rho_\xi(r)=(r\xi(r))^{-1}\) and shows that the coefficient rate is minimax optimal up to logarithmic factors.
\item
Finally, we transfer the stationary guarantees to finite-time observations under quantitative convergence to equilibrium; for the Ornstein-Uhlenbeck model, \(T_n\asymp\log n\) is sufficient.
\end{enumerate}

The remainder of the paper is organized as follows. Section~\ref{sec:literature} discusses related work. Section~\ref{sec:model} derives the stationary representation and likelihood reduction. Section~\ref{sec:dnn-parametrization} constructs the neural-network sieve and graded approximation. Section~\ref{sec:klbernstein} proves the likelihood oracle inequality, while Section~\ref{sec:upper-exact} derives the estimation rates and their finite-time extension. Section~\ref{sec:lower} establishes the minimax lower bounds, and Section~\ref{sec:sim-study} presents the numerical study. The proofs and auxiliary results are collected in the Appendix.

\section{Literature Review}\label{sec:literature}

MVSDEs were introduced by McKean~\citeyearpar{mckean1966} and are the standard framework for mean-field limits; see, e.g., \cite{sznitman1991}. Statistical inference for these systems includes maximum likelihood for interacting systems (\cite{kasonga1990}), semiparametric and parametric methods (\cite{belomestny-pilipauskaite-podolskij-2023,genon-catalot-laredo-2024}), online estimation (\cite{sharrock-kantas-parpas-pavliotis-2023}), and nonparametric methods (\cite{della-maestra-hoffmann-2022,comte-genon-catalot-2023}). The conservation-law model studied by ~\cite{jourdain-malrieu-2008} and~\cite{belomestny-orlova-2025} is driven by the cumulative distribution function of the law. The latter introduced an ODE-induced likelihood framework over noncompact invariant densities. Model \eqref{eq:model-general} is of Nemytskii type: the drift depends directly on $p_t(x)$. These mappings are not continuous under weak perturbations, so PDE techniques are required. Related work includes nonlinear Fokker--Planck flows (\cite{barbu-rockner-2024}), time-dependent Nemytskii MVSDEs (\cite{grube2024}), weighted $L^1$ semigroups (\cite{rehmeier2023}), and equations with unbounded coefficients (\cite{bogachev-salakhov-shaposhnikov-2024}). We rely on the analysis of the OU potential from~\cite{belomestny-morozova-ou}. Our deep neural network approximation uses the results of~\cite{belomestny-naumov-puchkin-samsonov-2023} on piecewise-polynomial activations. ReQU networks are useful here because differentiability is required for both the model and the inverse problem. As in~\cite{belomestny-orlova-2025}, we intersect the network class with a deterministic smoothness ball because sparsity and bounded weights alone do not provide inverse stability. Our lower bounds use Assouad methods from~\cite{tsybakov2009}; related DNN nonparametric rates appear in~\cite{schmidt-hieber-2020}. 

\section{Model, stationary representation, and estimation idea}\label{sec:model}

\subsection{Model assumptions}

In what follows, it is assumed that the potential \(V\) is known and satisfies the following assumption. 
\begin{assumption}\label{ass:potential}
The function \(\V:\R^d\to[v_\star,\infty)\) is \(C^2\) and coercive, \(v_\star:=\min_{x\in\R^d} V(x)\), and \(e^{-a\V}\) is integrable for any \(a>0\). Its level-set measure 
\(m_\V(s)
:=\int_{\{x:\V(x)=s\}}\norm{\nabla \V(x)}_2^{-1}\mathcal H^{d-1}(\dd x)
\) 
for \(s>v_\star\) is locally finite and bounded above and below by positive constants on compact intervals away from critical values.
\end{assumption}
\begin{remark}
    It can be seen that the Ornstein-Uhlenbeck potential~\eqref{eq:ou-potential} satisfies Assumption~\ref{ass:potential} with 
    \(v_\star=1\) and
    \(
    m_V(s)
    =(\omega_d/2)(s-1)^{d/2-1},
    \)
    where
    \(
    \omega_d=2\pi^{d/2}/\Gamma(d/2)
    \),
    \(s>1\),
which makes it a prime example of models of the considered class.
\end{remark}

Furthermore, we work under the following assumption on the coefficient class.
\begin{assumption}\label{ass:coefficient}
There exist constants $0<\kappa<K<\infty$ and $L<\infty$ such that every admissible coefficient \(\xi\), including the true coefficient \(\Xi_0\), satisfies
\begin{equation}\label{eq:bounds-xi}
 \kappa\leq \xi(r)\leq K \quad \text{and} \quad \abs{\xi(r)-\xi(u)}\leq L\abs{r-u} \quad \text{for all } r,u\geq0.
\end{equation}
\end{assumption}
The structural conditions above are sufficient for the stationary representation and its basic stability properties. For the approximation and rate results, we impose additional regularity only on the true coefficient over the density range relevant to estimation. More precisely, for some \(\beta>1\) and \(H<\infty\), we assume \(\Xi_0\in\mathcal H^\beta([0,U],H)\), where \(U\) is chosen in Subsection~\ref{subsec:requ} to contain the density ranges of all candidate stationary models. We also assume that the range of \(\Xi_0\) is separated from the boundary of the working envelope \([\kappa,K]\), i.e., there exists \(\eta_0>0\) such that
\begin{equation}\label{eq:truth-interior-envelope}
 \kappa+\eta_0\leq\Xi_0(r)\leq K-\eta_0 \quad \text{for } 0\leq r\leq U.
\end{equation}
Condition~\eqref{eq:truth-interior-envelope} is an interiority condition relative to the sieve envelope that will be constructed later, rather than an additional shape restriction on \(\Xi_0\). The constants \(\kappa\) and \(K\) need not be sharp and may be chosen slightly outside the range of the true coefficient.

\subsection{Invariant density and identifiability}

For a candidate $\xi$, define
\begin{equation}\label{eq:gxi}
 g_\xi(r)=\int_1^r (u\xi(u))^{-1}\,\dd u, \qquad r>0.
\end{equation}

\begin{proposition}
\label{prop:invariant-general}
Under Assumptions~\ref{ass:potential} and \ref{ass:coefficient}, there exists a unique \(C^1\) zero-flux stationary density for $\xi$ given by
\begin{equation}\label{eq:pi-general}
 \pi_\xi(x)=g_\xi^{-1}(\mu_\xi-\V(x)),
\end{equation}
where $\mu_\xi$ is the unique constant satisfying \( \int_{v_\star}^{\infty}g_\xi^{-1}(\mu_\xi-s)m_\V(s)\,\dd s=1 \). Equivalently, $q_\xi(s):=g_\xi^{-1}(\mu_\xi-s)$ solves
\begin{equation}\label{eq:q-ode-general}
 q_\xi'(s)=-q_\xi(s)\xi(q_\xi(s)) \quad \text{for} \quad s>v_\star,
\end{equation}
where $a_\xi:=q_\xi(v_\star)$ is chosen to satisfy 
\begin{equation}\label{qmv_norm}
\int_{v_\star}^{\infty}q_\xi(s)m_\V(s)\,\dd s=1.
\end{equation}
\end{proposition}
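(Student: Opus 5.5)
The plan is to reduce the stationary zero-flux equation to a scalar ODE, solve it explicitly through $g_\xi$, and fix the free constant by normalization. Zero flux means
\[
\nabla\pi+\Xi(\pi)\pi\nabla\V=0 .
\]
On the open set $\{\pi>0\}$ this reads $\nabla\pi/(\pi\xi(\pi))=-\nabla\V$. Since $g_\xi'(r)=(r\xi(r))^{-1}$, the left-hand side equals $\nabla\bigl(g_\xi(\pi)\bigr)$, so $\nabla\bigl(g_\xi(\pi)+\V\bigr)=0$ on each connected component of $\{\pi>0\}$.

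First I need $\pi>0$ everywhere. If $\pi(x_0)=0$ at some point, then $\pi$ solves $\nabla\pi=-\xi(\pi)\pi\nabla\V$, which is locally Lipschitz in $\pi$ because $\xi$ is Lipschitz and bounded by Assumption~\ref{ass:coefficient}. Uniqueness for this ODE along any $C^1$ curve starting at $x_0$ gives $\pi\equiv0$ along that curve. Since $\R^d$ is connected, this forces $\pi\equiv0$, which contradicts $\int\pi=1$. Hence $\pi>0$ on $\R^d$, and $g_\xi(\pi(x))+\V(x)=\mu$ for a single constant $\mu$.

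Next I check that $g_\xi$ can be inverted on the whole line. The bounds $\kappa\le\xi\le K$ give
\[
\frac{\log r}{K}\le g_\xi(r)\le\frac{\log r}{\kappa}\quad (r\ge1),
\]
with the reversed inequalities for $r<1$. So $g_\xi$ is a strictly increasing $C^1$ bijection $(0,\infty)\to\R$. It follows that $\pi=g_\xi^{-1}(\mu-\V)$, which is \eqref{eq:pi-general}, and this expression is $C^1$ because $\V\in C^2$.

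Conversely, any function of this form has zero flux. Setting $q(s)=g_\xi^{-1}(\mu-s)$, differentiation gives $q'=-1/g_\xi'(q)=-q\xi(q)$, which is \eqref{eq:q-ode-general}.

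The remaining step is to show there is exactly one $\mu$ with $\int\pi=1$; I expect this to be the main technical point. By the coarea formula,
\[
\int_{\R^d} g_\xi^{-1}(\mu-\V(x))\,\dd x=\int_{v_\star}^\infty g_\xi^{-1}(\mu-s)\,m_\V(s)\,\dd s=:F(\mu).
\]
The coarea formula applies because critical values of $\V$ form a Lebesgue-null set (Sard, using $\V\in C^2$ on the relevant dimension, or one can simply assume it via Assumption~\ref{ass:potential}), and the critical set contributes nothing when written as $\int f(\V)\,\dd x$.

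Finiteness of $F$ comes from the bounds on $g_\xi$: for $\mu-s<0$ we have $g_\xi^{-1}(\mu-s)\le e^{\kappa(\mu-s)}$. Hence $\pi\le C_\mu e^{-\kappa\V}$, which is integrable by Assumption~\ref{ass:potential}.

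For existence and uniqueness of $\mu$:
\begin{itemize}
\item $F$ is continuous, by dominated convergence with the same exponential majorant locally uniformly in $\mu$.
\item $F$ is strictly increasing, because $g_\xi^{-1}$ is strictly increasing and $m_\V>0$ on a set of positive measure.
\item $F(\mu)\to0$ as $\mu\to-\infty$, since $g_\xi^{-1}(\mu-s)\le e^{\kappa(\mu-s)}$ once $\mu<v_\star$.
\item $F(\mu)\to\infty$ as $\mu\to\infty$, by monotone convergence, since $g_\xi^{-1}(\mu-s)\to\infty$ pointwise.
\end{itemize}
The intermediate value theorem then gives a unique $\mu_\xi$ with $F(\mu_\xi)=1$. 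Finally, $a_\xi=q_\xi(v_\star)=g_\xi^{-1}(\mu_\xi-v_\star)$, and the normalization is exactly \eqref{qmv_norm}, which completes the equivalence.
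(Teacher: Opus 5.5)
Your proof is essentially the paper's argument: you show positivity of any zero-flux solution by uniqueness along $C^1$ paths (the paper uses Gr\"onwall with $|y'|\le C_\gamma|y|$, which needs only $\xi\le K$ rather than the Lipschitz bound), then use $\nabla\bigl(g_\xi(\pi)+V\bigr)=0$ on the connected space $\R^d$, and obtain a unique $\mu_\xi$ from a finite, continuous, strictly increasing $F$ with limits $0$ and $\infty$. The one soft spot is your coarea justification: Sard for $V:\R^d\to\R$ requires $V\in C^d$, and null critical values do not make $\{\nabla V=0\}$ Lebesgue-null (think of a potential that is flat on a ball at its minimum), yet a Lebesgue-null critical set is exactly what $\int_{\R^d}\phi(V(x))\,\dd x=\int\phi(s)m_V(s)\,\dd s$ requires --- though the paper tacitly assumes the same when it identifies $\int_{\R^d}\pi_{\xi,\mu}\,\dd x$ with the $m_V$-normalization in the statement.
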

The proof is in Appendix~\ref{proof:prop:invariant-general}.
\begin{remark}
    For the OU potential~\eqref{eq:ou-potential},~\cite{belomestny-morozova-ou} prove this zero-flux solution is the unique stationary density.
\end{remark}

\begin{remark}
By~\eqref{eq:q-ode-general}, integrating 
\((\log q_\xi)'=-\xi(q_\xi)\) one obtains the Gaussian envelope
\begin{equation}\label{eq:ou-envelope}
 a_\xi e^{-K(s-v_\star)}\leq q_\xi(s)\leq a_\xi e^{-\kappa(s-v_\star)}, \qquad s\geq v_\star.
\end{equation}
Defining \(Z_c:=\int_{\R^d} \exp(-c(V(x)-v_\star))\,dx\) for \(c>0\) and using the above bounds alongside the normalisation condition~\eqref{qmv_norm}, we get that \(Z_\kappa^{-1}\leq a_\xi\leq Z_K^{-1}\), i.e.,
\begin{equation}\label{eq:exp-envelope_gen}
\frac{1}{Z_\kappa} e^{-K(s-v_\star)}
\leq q_\xi(s)
\leq \frac{1}{Z_K} e^{-\kappa(s-v_\star)},
\quad s\geq v_\star.
\end{equation}
In particular, for the Ornstein-Uhlenbeck potential~\eqref{eq:ou-potential} it holds that 
\(
(\kappa/\pi)^{d/2} 
\leq a_\xi 
\leq (K/\pi)^{d/2}.
\)
\end{remark}

It is worth mentioning that stationary observations identify \(\Xi_0\) only on the range \((0,a_0]\) of \(\pi_0\), where 
\(a_0=q_{\Xi_0}(v_\star)\) is the maximal stationary density. The limit \(r\to0\) corresponds to the spatial tail, whereas \(r\to a_0\) corresponds to the minimum energy level. We therefore fix an interior interval
\(
I=[r_-,r_+]\Subset(0,a_0)
\)
as the domain on which the coefficient is recovered.

\subsection{Dimension reduction and maximum likelihood estimation}
\label{subsec:mle}

Observe that, if \(X\sim\pi_\xi\), then \(S=V(X)\) has density 
\(f_\xi(s)=q_\xi(s)m_V(s)\). 
Given \(S=s\), the distribution of \(X\) on the level set \(\{\V=s\}\) does not depend on \(\xi\). Thus, the likelihood for \(\xi\) based on \(X_1,\ldots,X_n\) is equivalent (up to a constant) to the likelihood based on \(S_i=\V(X_i)\). 
Given a sample \(X_1,\dots,X_n\) from \(\pi_0\) and a sieve \(\mathfrak X_m\) 
satisfying \eqref{eq:bounds-xi}, we hence define the maximum likelihood estimator as
\begin{equation}\label{eq:mle}
\wh\Xi_n\in\arg\min_{\xi\in\cX_m}\mathcal L_n(\xi), 
 \quad \text{where} \quad \mathcal L_n(\xi)=\frac{1}{n}\sum_{i=1}^n -\log \pi_\xi(X_i),
\end{equation}
which is equivalent to minimizing 
\(
\frac{1}{n}\sum_{i=1}^n -\log q_\xi(S_i)
\), where \(S_i=\V(X_i)\). 
For each candidate \(\xi\), the likelihood is numerically evaluated by solving the scalar profile equation with an initial value chosen through the normalization constraint~\eqref{qmv_norm} and interpolating the resulting profile at the observed energy values \(S_i=V(X_i)\). 

\section{Neural-Network Sieve and Graded Approximation}\label{sec:dnn-parametrization}
\subsection{ReQU sieve and structural constraints}\label{subsec:requ}
Since the candidate modes $a_\xi$ can exceed the true mode $a_0$, the neural network domain must accommodate all empirically reachable densities. Given~\eqref{eq:exp-envelope_gen}, we fix the domain upper bound \(U\geq \max\{1, a_+\}\), where \(a_+ := Z_K^{-1}\).
The value $1$ is included as the exact anchor in the primitive \eqref{eq:gxi}. For $r > U$, the networks are structurally extended by the constant value $\xi_\theta(U)$, ensuring that the coefficient bounds in \eqref{eq:bounds-xi} hold globally.

Let \(\requ(z):=(z_+)^2\)
denote the ReQU activation, and let \(\NN(L,\bm p,s,B)\) be the class of ReQU networks with depth \(L\), width vector \(\bm p\), at most \(s\) nonzero parameters, and all parameters bounded in absolute value by \(B\). For the \(m\)-th sieve, the depth \(L\) is fixed, while the remaining architectural parameters may grow subject to \(\max_j p_{m,j}\leq C_p m\), \(s_m=\lceil C_s m\rceil\) and \(B_m\leq m^{C_B}\) for fixed constants \(C_p, C_s,C_B>0\) independent of \(m\). These restrictions provide the metric-entropy control required below.
To enforce the structural bounds \(\kappa\leq\xi\leq K\), fix some 
\(0<\tau<\eta_0/4(K-\kappa)\)
and let \(S_\tau:\R\to[0,1]\) be a globally Lipschitz clamping function having a Lipschitz constant \(L_\tau<\infty\) satisfying \(S_\tau(z)=z\) for all \(z\in[\tau,1-\tau]\). We parameterize candidate interaction coefficients by
\begin{equation}\label{eq:dnn-xi}
 \xi_\theta(r)=\kappa+(K-\kappa)S_\tau(N_\theta(r)) \quad \text{for } r\in[0,U], \quad N_\theta\in\NN(L,\bm p_m,s_m,B_m)
\end{equation}
and define the corresponding sieve
\begin{equation}\label{eq:smooth-dnn-sieve}
 \cX_m := \bigl\{\xi_\theta:\text{defined by \eqref{eq:dnn-xi}}\bigr\} \cap \mathcal H^\beta([0,U],H_1)
\end{equation}
with \(H_1>0\) a sufficiently large constant whose existence is established in Lemma~\ref{lem:dnn}. The estimator for \(\Xi_0\) is then given by 
\(\widehat{\Xi}=\xi_{\widehat{\theta}}\), where
\(
\widehat{\theta}
\in
\arg\min_{\theta:\,\xi_\theta\in\cX_m}
\mathcal L_n(\xi_\theta).
\)

\subsection{Linearized forward map and endpoint-adapted approximation}

Since the likelihood measures approximation error through the induced stationary densities rather than directly through the coefficient, the relevant approximation term for the analysis is \(\KL(\pi_0\Vert \pi_{\xi_m})\). 
Given~\eqref{eq:pi-general}, the change that is brought to the stationary model by coefficient approximation is characterised by the function  \(g_\xi\) and the corresponding normalisation constant \(\mu_\xi\). More precisely, define a small perturbation \(\xi_t:=\Xi_0+th\). Hereafter, a subscript zero indicates evaluation at the true coefficient \(\Xi_0\). Define
\(
(\cK h)(r)
:=
\int_1^r h(u)w_0(u)\,\dd u
\)
with \(w_0(u):=(u\Xi_0(u)^2)^{-1}\)
and observe that, since 
\(
\frac{\dd}{\dd t}|_{t=0}g_{\xi_t}(r)
=
-(\cK h)(r),
\)
differentiating
\(
g_{\xi_t}(q_{\xi_t}(s))=\mu_{\xi_t}-s
\)
gives
\begin{equation}\label{eq:stat-time}
\frac{\dd}{\dd t}\bigg|_{t=0}
\log\pi_{\xi_t}(x)
=
\Xi_0(\pi_0(x))
\left(
\dot\mu_0+(\cK h)(\pi_0(x))
\right),
\end{equation}
where 
\(\dot\mu_0=\partial_t\mu_{\xi_t}|_{t=0}\). Differentiating also the normalisation condition~\eqref{qmv_norm} gives
\begin{multline}\label{eq:c0-functional}
 \dot\mu_0
=-\left(\int_{v_\star}^{\infty}q_0(s)\Xi_0(q_0(s))(\cK h)(q_0(s))m_\V(s)\,\dd s\right)
\\\times\left(\int_{v_\star}^{\infty}q_0(s)\Xi_0(q_0(s))m_\V(s)\,\dd s\right)^{-1}.
\end{multline}
Hence, we arrive at the score operator
\(
 (\cA h)(r)
 =\Xi_0(r)\left(
 c_0(h)+(\cK h)(r)
 \right)
\)
with \(c_0(h):=\dot\mu_0\).
Now let \(X\sim\pi_0\) and define \(R:=\pi_0(X)\), denoting the law of \(R\) by \(\nu_0\). With
\(
s_0(r)=q_0^{-1}(r)=\mu_0-g_0(r),
\)
a change of variables gives
\begin{equation}\label{v0}
(\dd\nu_0/\dd r)(r)
=
m_V(s_0(r))/\Xi_0(r),
\qquad 0<r<a_0,
\end{equation}
from which the local Kullback--Leibler geometry is
\[
\KL(\pi_0\Vert\pi_{\xi_t})
=
\frac{t^2}{2}
\E_0\left[(\cA h)(R)^2\right]
+o(t^2)
=
\frac{t^2}{2}
\Vert\cA h\Vert_{L^2(\nu_0)}^2
+o(t^2).
\]
Consequently, for an approximant \(\xi_m\), the relevant forward error
is
\(
\Vert\cA(\xi_m-\Xi_0)\Vert_{L^2(\nu_0)}
\).

To determine the approximation accuracy required near the endpoints, we examine the measure appearing in the forward norm. By~\eqref{eq:c0-functional} and the uniform bounds on \(\Xi_0\), controlling
\(
\Vert\cA h\Vert_{L^2(\nu_0)}
\)
reduces, up to fixed constants, to controlling
\(
\Vert\cK h\Vert_{L^2(\nu_0)}.
\)
The behaviour of the density of \(\nu_0\) near \(r=0\) therefore determines the appropriate approximation mesh. In turn, the endpoint behaviour of \(\nu_0\) is determined by the growth of the coarea factor \(m_V(s)\) at large energies. Indeed, since
\[
\frac{d\nu_0}{dr}(r)
=\frac{m_V(s_0(r))}{\Xi_0(r)}
\quad\text{and}\quad
s_0(r)-v_\star
=\int_r^{a_0}\frac{du}{u\Xi_0(u)},
\]
it holds that
\[
\frac{1}{K}\log\frac{a_0}{r}
\leq s_0(r)-v_\star
\leq\frac{1}{\kappa}\log\frac{a_0}{r}.
\]
In what follows, we are making the following assumption on the potential \(V\).
\begin{assumption}\label{ass:log}
There exist \(C_V<\infty\), \(\alpha\geq0\), and \(v_V>v_\star\) such that
\[
m_V(s)\leq C_V(1+s-v_\star)^\alpha,
\qquad s\geq v_V.
\]
\end{assumption}
Under this condition, after decreasing \(r_0>0\) if necessary,
\begin{equation}\label{eq:nu-zero-upper}
v_0(r)
\leq
C_\nu
\left(
1+\log\!\left(\frac{r_0}{r}\right)
\right)^{\alpha},
\qquad
0<r\leq r_0,
\end{equation}
with some \(0<C_\nu<\infty\) and \(\alpha\geq 0\).
For instance, for the Ornstein--Uhlenbeck potential~\eqref{eq:ou-potential}, since
\(m_V(s)=0.5\omega_d(s-1)^{d/2-1}\), condition~\eqref{eq:nu-zero-upper} holds
with \(\alpha=(d/2-1)_+\) and \(
r_0<\min\{a_0,1/2\}
\).
Hence, under Assumption~\ref{ass:log}, \(\nu_0\) contributes at most a logarithmic factor near zero; the principal loss in the forward approximation rate comes instead from the factor \(u^{-1}\) in \(\cK\). If \(h_m=\xi_m-\Xi_0\) satisfies
\(
|h_m(u)|\lesssim u^\beta
\)
on the first mesh cell \([0,\Delta]\), then
\[
\Vert\cK h_m\Vert_{L^2(\nu_0;[0,\Delta])}
\lesssim
\Delta^{\beta+1/2}
\left(1+\left|\log\Delta\right|\right)^{\alpha/2}.
\]
A uniform mesh, for which \(\Delta\asymp m^{-1}\), therefore yields only the order \(m^{-(\beta+1/2)}\), up to logarithmic factors.

To recover the desired forward approximation order, we use the graded preliminary knots
\(
\widetilde r_{j,m}=U(j/m)^\vartheta
\)
and align the nearest knot with \(1\). This gives
\begin{equation}\label{eq:graded-knots}
0=r_{0,m}<r_{1,m}<\cdots<r_{m,m}=U,
\qquad
1\in\{r_{j,m}:0\leq j\leq m\}.
\end{equation}
The first cell then has length
\(
\Delta_{1,m}\asymp m^{-\vartheta}.
\)
Choosing
\(
\vartheta>
(2\beta+2)/(2\beta+1)
\)
ensures that its contribution is of order at most \(m^{-(\beta+1)}\), up to logarithmic factors. The following lemma constructs a ReQU approximant on this mesh and simultaneously controls its uniform coefficient error, its forward error in \(L^2(\nu_0)\), and the complexity of the resulting sieve.

\begin{lemma}
\label{lem:dnn}
Assume \(\Xi_0\in\mathcal H^\beta([0,U],H)\) satisfies \eqref{eq:truth-interior-envelope}, and Assumption~\ref{ass:log} holds. 
Consider the graded mesh with 
\(
\vartheta>
(2\beta+2)/(2\beta+1)
\).
There exist fixed architecture constants 
\(L, C_p, C_s, C_B\)
and 
\(m_0, H_1<\infty\) 
such that, for every \(m\geq m_0\), there exists a candidate \(\xi_m \in \cX_m\) 
satisfying the exact moment cancellations on every cell, namely,
\begin{equation}\label{eq:approximant-moments}
 \int_{J_{j,m}}\left( 
 \xi_m(u)-\Xi_0(u)
 \right) 
 \frac{\dd u}{u\Xi_0(u)^2}=0.
\end{equation}
This candidate achieves
\begin{equation}\label{eq:app_fwd}
\|\xi_m-\Xi_0\|_{\infty,[0,U]}
\leq C_{\mathrm{app}}m^{-\beta}
\quad\text{and}\quad
\|\mathcal A(\xi_m-\Xi_0)\|_{L^2(\nu_0)}
\leq C_{\mathrm{fwd}}m^{-(\beta+1)}
\end{equation}
with
\(
C_{\mathrm{app}}
=C_{\mathrm{app}}(\beta,H,U,\kappa,K,\vartheta),
\)
and
\(
C_{\mathrm{fwd}}
=C_{\mathrm{fwd}}
(\beta,H,U,\kappa,K,\vartheta,r_0,\alpha,C_\nu)
\).
Moreover, for \(\varepsilon\in(0,1]\), 
the sieve entropy satisfies 
\[
\log N(\varepsilon,\mathfrak X_m,\|\cdot\|_\infty)
\leq
C_{\mathrm{ent}}m
\log\left(C_{\mathrm{ent}}m/\varepsilon\right),
\]
where \(C_{\mathrm{ent}}\) depends only on the fixed sieve-architecture constants, 
\(U\), \(\kappa\), \(K\) and
\(L_\tau\).
\end{lemma}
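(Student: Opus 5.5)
The plan has three steps: build a piecewise-polynomial approximant on the graded mesh, realise it exactly by a ReQU network, and bound the entropy by a covering argument.

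\emph{Step 1: the approximant.} On each cell \(J_{j,m}=[r_{j-1,m},r_{j,m}]\) of the graded mesh \eqref{eq:graded-knots}, let \(P_j\) be a local polynomial approximant of \(\Xi_0\) of degree \(\lfloor\beta\rfloor\), for instance the Taylor polynomial at the left endpoint. On the first cell we use the Taylor polynomial at \(0\). Next we add to \(P_j\) a small multiple \(c_j\phi_j\) of a fixed bump \(\phi_j\) supported in \(J_{j,m}\), and choose \(c_j\) so that the weighted moment \eqref{eq:approximant-moments} with weight \(w_0(u)=(u\Xi_0(u)^2)^{-1}\) vanishes exactly. Since \(\int_{J_{j,m}}\phi_j w_0>0\) is comparable to \(\int_{J_{j,m}} w_0\), we get \(|c_j|\lesssim\|P_j-\Xi_0\|_{\infty,J_{j,m}}\). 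To obtain a \(C^{\beta}\) function with bounded H\"older norm \(H_1\), we glue cells with a smooth ReQU-representable partition of unity (spline blending). Alternatively we use a quasi-interpolant spline of order exceeding \(\beta\) on the graded mesh, whose H\"older seminorm is controlled by \(H\) when the mesh is quasi-uniform in the relevant sense.

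\emph{Step 2: the two error bounds in \eqref{eq:app_fwd}.} The cell lengths satisfy \(\Delta_j\lesssim \vartheta U m^{-1}\). Hence \(\|\xi_m-\Xi_0\|_\infty\lesssim H\max_j\Delta_j^\beta\lesssim m^{-\beta}\), which is the first bound. This also places \(\xi_m\) inside \([\kappa+\eta_0/2,K-\eta_0/2]\) for \(m\ge m_0\), by \eqref{eq:truth-interior-envelope}. Consequently the clamp \(S_\tau\) acts as the identity because \(\tau<\eta_0/4(K-\kappa)\), and \(\xi_m\) has the form \eqref{eq:dnn-xi}.

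For the forward bound, the moment cancellation makes \((\cK h_m)(r_{j,m})=0\) at every knot, since the anchor \(1\) is a knot. Thus on \(J_{j,m}\) we have \(|\cK h_m|\le\int_{J_{j,m}}|h_m|w_0\).
\begin{itemize}
\item For cells away from \(0\), \(w_0\lesssim r_{j-1,m}^{-1}\), so \(|\cK h_m|\lesssim \Delta_j^{\beta+1}/r_{j-1,m}\). On the graded mesh \(\Delta_j\asymp m^{-1}(j/m)^{\vartheta-1}\) and \(r_{j-1,m}\asymp (j/m)^\vartheta\), which gives \(\Delta_j/r_{j-1}\lesssim 1/j\). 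Hence \(|\cK h_m|\lesssim \Delta_j^{\beta}/j\lesssim m^{-\beta}\cdot m^{-1}\) up to the grading factor.
\item On the first cell, the Taylor remainder gives \(|h_m(u)|\lesssim u^\beta\), and the computation displayed before the lemma yields the contribution \(\Delta_1^{\beta+1/2}\) times a logarithmic factor. This is \(\lesssim m^{-(\beta+1)}\) by the choice of \(\vartheta\), using \eqref{eq:nu-zero-upper}.
\end{itemize}
Summing the squared contributions against \(\nu_0\) gives \(\|\cK h_m\|_{L^2(\nu_0)}\lesssim m^{-(\beta+1)}\). For the constant, \(|c_0(h_m)|\lesssim\|\cK h_m\|_{L^1(\nu_0)}\) by \eqref{eq:c0-functional}, and the denominator there is bounded below. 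Since \(\Xi_0\le K\), we conclude \(\|\cA h_m\|_{L^2(\nu_0)}\le C_{\mathrm{fwd}}m^{-(\beta+1)}\). The density of \(\nu_0\) near \(a_0\) and on \([r_0,a_0]\) is bounded by Assumption~\ref{ass:potential} away from critical values; near \(v_\star\) it is integrable, so that part is harmless.

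\emph{Step 3: network realisation and entropy.} ReQU networks represent \(x^2\) and hence products \(xy=((x+y)^2-(x-y)^2)/4\) exactly. Following \cite{belomestny-naumov-puchkin-samsonov-2023}, a piecewise polynomial spline with \(m\) pieces of fixed degree is then realised exactly with fixed depth, width \(O(m)\), \(O(m)\) nonzero weights, and weights polynomial in \(m\). The polynomial bound on the weights holds because the knots and coefficients are bounded and the local rescalings are of size \(\Delta_1^{-1}\asymp m^{\vartheta}\). Composing with the affine map and \(S_\tau\) yields \(\xi_m\in\cX_m\). The entropy bound follows from the standard Lipschitz dependence of a depth-\(L\) network on its parameters on \([0,U]\): the Lipschitz constant is at most \((C m B_m U)^{cL}\). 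Combined with \(\binom{P}{s_m}\) sparsity patterns, a parameter grid at scale \(\varepsilon\) divided by that Lipschitz constant, and \(L_\tau\), this gives \(\log N\le C_{\mathrm{ent}}m\log(C_{\mathrm{ent}}m/\varepsilon)\). Intersecting with the H\"older ball does not increase the covering number, up to a factor of two in \(\varepsilon\).

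The main obstacle is Step 1. We must enforce the exact weighted moment cancellation while keeping global \(C^\beta\) regularity with a constant \(H_1\) independent of \(m\). This is delicate on the graded mesh, where the cell sizes vary. The first approach to try is to take the bumps \(\phi_j=\Delta_j^{\beta}\psi((\cdot-r_{j-1})/\Delta_j)\) with a fixed smooth \(\psi\) and \(|c_j|\lesssim H\). Rescaled this way, each bump has H\"older seminorm \(O(1)\), and the bumps have disjoint supports, so the global H\"older norm stays bounded.
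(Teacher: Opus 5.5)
\textbf{Gap: your Step 1 does not produce an admissible candidate, and this construction is the core of the lemma.}

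Independent cellwise polynomials, such as your left-endpoint Taylor polynomials, do not match at the knots. At \(r_{j,m}\) the left piece differs from \(\Xi_0(r_{j,m})=(T_{r_{j,m}}\Xi_0)(r_{j,m})\) by a Taylor remainder of size up to \(H\Delta_{j,m}^{\beta}\), which is nonzero in general. So your piecewise function is discontinuous and lies in no H\"older ball, let alone in \(\mathcal H^\beta([0,U],H_1)\) with \(H_1\) independent of \(m\). The bumps vanish at the cell endpoints and cannot remove these jumps.

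The gluing step that is supposed to fix this is only named, not constructed. In the order you describe (moment-correcting bumps first, blending afterwards), the blending changes the function on every cell and destroys the exact cancellation \eqref{eq:approximant-moments}.

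The quasi-interpolant alternative fails at \(r=0\). There \(w_0(u)\asymp u^{-1}\), so the first-cell moment \(\int_0^{\Delta_{1,m}}(\xi_m-\Xi_0)w_0\,\dd u\) is finite only if \(\xi_m(0)=\Xi_0(0)\). Your first-cell estimate also needs \(\abs{\xi_m(u)-\Xi_0(u)}\lesssim u^\beta\), whereas a generic quasi-interpolant only gives an error of order \(\Delta_{1,m}^\beta\) uniformly on that cell. For the same reason \(\int_{J_{1,m}}w_0=\infty\), so the comparability you use to bound \(c_j\) breaks down on the first cell; there you need the rescaled-bump computation.

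\textbf{The missing device is the paper's two-point Hermite blending.} Take \(k=\lceil\beta\rceil-1\), \(t=(r-r_{j-1,m})/\Delta_{j,m}\), and the polynomial smoothstep \(H_k(t)\propto\int_0^t z^k(1-z)^k\,\dd z\). On \(J_{j,m}\) set \(Q_m\Xi_0=(1-H_k(t))\,T_{r_{j-1,m}}\Xi_0+H_k(t)\,T_{r_{j,m}}\Xi_0\), where \(T_x\) is the degree-\(k\) Taylor polynomial at \(x\).

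Since \(H_k^{(\ell)}(0)=H_k^{(\ell)}(1)=0\) for \(1\le\ell\le k\), all derivatives of order at most \(k\) of \(Q_m\Xi_0\) coincide with those of \(\Xi_0\) at every knot. This gives a globally \(C^k\) piecewise polynomial of fixed degree with \(\norm{(Q_m\Xi_0-\Xi_0)^{(\ell)}}_{L^\infty(J_{j,m})}\lesssim H\Delta_{j,m}^{\beta-\ell}\), hence a uniform \(H_1\) on the graded mesh. On the first cell it uses \(T_0\Xi_0\), which yields \(\abs{Q_m\Xi_0(u)-\Xi_0(u)}\lesssim u^\beta\).

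Only after this are the bubbles \(\Delta_{j,m}^\beta t^{k+1}(1-t)^{k+1}\) added to enforce \eqref{eq:approximant-moments}. Like your final \(\psi\)-bumps, they are flat to order \(k\) at the endpoints.

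\textbf{A smaller issue in Step 2.} We have \(\Delta_{j,m}^{\beta+1}/r_{j-1,m}\asymp m^{-\vartheta\beta}j^{\vartheta\beta-\beta-1}\), which is of order \(m^{-\vartheta\beta}\) for small \(j\). This exceeds \(m^{-(\beta+1)}\) whenever \(\vartheta<(\beta+1)/\beta\), a range the hypothesis allows, so your pointwise claim \(\abs{\cK h_m}\lesssim m^{-(\beta+1)}\) is false there. The rate comes only from the cell-length-weighted sum \(m^{-\vartheta(2\beta+1)}\sum_j j^{\vartheta(2\beta+1)-(2\beta+3)}\), up to logarithmic factors. So the condition \(\vartheta(2\beta+1)>2\beta+2\) is also what controls the intermediate cells, not just the first one.

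The rest of your plan follows the paper's route: knot cancellation of \(\cK h_m\) via the anchor at \(1\), the bound on \(c_0\), exact ReQU realisation with polynomially bounded weights, and the covering bound.
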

The proof is given in Appendix~\ref{proof:lem:dnn}.

\section{Fast Oracle Inequality for the Stationary Density}
\label{sec:klbernstein}

Now we aim at bounding the KL-error of the MLE~\eqref{eq:mle}. The fast density rate follows the localization strategy of
\cite{belomestny-orlova-2025}. For \(M>v_\star\), let
\[
A_M:=\{x:V(x)\leq M\},
\quad
\tau_\xi(M):=\int_{A_M^c}\pi_\xi(x)\,\dd x,
\qquad
\pi_\xi^M(x):=
\frac{\pi_\xi(x)\mathbf 1_{A_M}(x)}
{1-\tau_\xi(M)}.
\]
The general exponential envelopes~\eqref{eq:exp-envelope_gen} imply that \(\tau_\xi(M)\) decreases
exponentially in \(M\), uniformly over the coefficient class, while the
log-likelihood ratios on \(A_M\) grow at most linearly in \(M\). Thus,
choosing \(M_n-v_\star\asymp\log n\) makes the truncation terms
polynomially small at the cost of logarithmic factors. We first analyse the likelihood for the truncated densities. Let
\(Y_1,\ldots,Y_n\) be i.i.d.\ from a density \(p_0\), and write
\[
P_n f:=\frac1n\sum_{i=1}^n f(Y_i),
\qquad
P_0f:=\int_{\R^d} f(y)p_0(y)\,\dd y.
\]
If \(p^\ast\) is a comparison density in the truncated sieve and
\(\widehat p\) satisfies
\(
P_n\log\widehat p
\geq
P_n\log p^\ast-\varepsilon_{\mathrm{opt},M},
\)
where \(\varepsilon_{\mathrm{opt},M}\geq0\) is the optimization tolerance measuring how far \(\widehat p\) is from maximizing the truncated empirical log-likelihood,
then
\[
\operatorname{KL}(p_0\|\widehat p)
-
\operatorname{KL}(p_0\|p^\ast)
\leq
(P_n-P_0)\log\frac{\widehat p}{p^\ast}
+
\varepsilon_{\mathrm{opt},M}.
\]
Moreover,
\[
(P_n-P_0)\log\frac{\widehat p}{p^\ast}
=
(P_n-P_0)\log\frac{p_0}{p^\ast}
-
(P_n-P_0)\log\frac{p_0}{\widehat p}.
\]
The problem is therefore reduced to controlling centered log-likelihood
ratios uniformly over the truncated sieve. After discretizing the sieve
by a finite net, Lemma~\ref{lem:kl-bernstein} bounds each such fluctuation by an absorbable
multiple of the corresponding KL divergence plus an entropy term of
order \(B_{\gamma_M}/n\), yielding the fast truncated oracle inequality.
The subsequent full--truncated decomposition then adds the normalization,
tail, and escape-probability terms required to return to the full MLE.

\subsection{Localized likelihood control on the truncated domain}

We start with the following corrected localized KL--Bernstein inequality.
\begin{lemma}
\label{lem:kl-bernstein}
Let \(Y_1,\ldots,Y_n\) be i.i.d.\ random variables from a density \(p_0^A\) on a set \(A\). Let \(\cP_A\) be a finite set of densities such that \(\gamma\leq p_0^A(y)/p^A(y)\leq\gamma^{-1}\) for some \(\gamma\in(0,1)\). Let  \(\Lambda_\gamma=\log(1/\gamma)\) and 
\begin{equation}\label{eq:B-gamma}
 B_\gamma:=\frac{\Lambda_\gamma^2}{\Lambda_\gamma-1+\gamma}.
\end{equation}
With probability at least \(1-\delta\), simultaneously for all \(p^A\in\cP_A\) it holds that
\begin{multline}
 \left| \frac1n\sum_{i=1}^n\log\frac{p_0^A(Y_i)}{p^A(Y_i)} -\E_0\log\frac{p_0^A(Y_1)}{p^A(Y_1)} \right| 
 \leq
 \sqrt{\frac{2B_\gamma\KL(p_0^A\Vert p^A)\left(\log|\cP|+\log(2/\delta)\right)}{n}} \\
 \quad +\frac{2\Lambda_\gamma\left(\log|\cP|+\log(2/\delta)\right)}{3n}.
 \label{eq:kl-bernstein}
\end{multline}
Consequently, for any \(\eta\in(0,1)\),
\begin{equation}\label{eq:kl-bernstein-linearized}
 \left|(P_n-P_0)\log\frac{p_0^A}{p^A}\right| \leq \eta\KL(p_0^A\Vert p^A) + C_\eta \frac{B_\gamma\left(\log|\cP|+\log(2/\delta)\right)}{n}.
\end{equation}
\end{lemma}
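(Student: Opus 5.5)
Fix \(p^A\in\cP_A\) and write \(Z=\log(p_0^A(Y)/p^A(Y))\) with \(Y\sim p_0^A\), so that \(|Z|\le\Lambda_\gamma\). The plan is to apply Bernstein's inequality to \(Z\) for each fixed \(p^A\), control the variance by the Kullback--Leibler divergence, and then take a union bound over \(\cP_A\).

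\textbf{Step 1: variance bound.} The key step is the inequality
\[
\E_0 Z^2 \le B_\gamma\,\KL(p_0^A\Vert p^A).
\]
Let \(\rho=p^A/p_0^A\in[\gamma,\gamma^{-1}]\). Then \(\KL=\E_0[\rho-1-\log\rho]\), because \(\E_0\rho=1\) (both functions are densities on \(A\)). It therefore suffices to prove the pointwise bound \((\log\rho)^2\le B_\gamma(\rho-1-\log\rho)\) for \(\rho\in[\gamma,\gamma^{-1}]\). Set \(x=\log\rho\in[-\Lambda_\gamma,\Lambda_\gamma]\) and \(\phi(x)=x^2/(e^x-1-x)\). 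On the ray \(x<0\) the denominator grows more slowly than on \(x>0\), so \(\phi\) should be decreasing in \(x\). Its maximum on the interval is then attained at \(x=-\Lambda_\gamma\), where
\[
\phi(-\Lambda_\gamma)=\frac{\Lambda_\gamma^2}{\gamma-1+\Lambda_\gamma}=B_\gamma.
\]
Checking the monotonicity of \(\phi\) is the main obstacle. I would first try to show that \(\psi(x)=(e^x-1-x)/x^2=\int_0^1(1-t)e^{tx}\,\dd t\) is increasing, which is immediate since the integrand increases in \(x\) for every \(t\in[0,1)\). Because \(\phi=1/\psi\), this gives the claim.

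\textbf{Step 2: Bernstein and union bound.} The centered variable satisfies \(|Z-\E_0 Z|\le 2\Lambda_\gamma\) and \(\var Z\le\E_0Z^2\). Bernstein's inequality gives, with probability at least \(1-\delta'\),
\[
|(P_n-P_0)Z|\le\sqrt{\frac{2\var(Z)\log(2/\delta')}{n}}+\frac{2\Lambda_\gamma\log(2/\delta')}{3n}.
\]
If one prefers the constant \(\Lambda_\gamma\) rather than \(2\Lambda_\gamma\) in the range condition, use the one-sided ranges \(Z\le\Lambda_\gamma\) and \(-Z\le\Lambda_\gamma\) separately, together with \(\E_0Z=\KL\ge0\). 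Then take \(\delta'=\delta/|\cP|\) and a union bound over \(\cP_A\); this gives \(\log(2/\delta')=\log|\cP|+\log(2/\delta)\). Inserting the bound from Step 1 yields \eqref{eq:kl-bernstein}.

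\textbf{Step 3: linearized form.} Apply \(\sqrt{ab}\le\eta a+b/(4\eta)\) with \(a=\KL\) and \(b=2B_\gamma(\log|\cP|+\log(2/\delta))/n\). For the second term, note that \(\Lambda_\gamma\le B_\gamma\), since \(\Lambda_\gamma-1+\gamma\le\Lambda_\gamma\). Hence the remainder is at most
\[
C_\eta\,\frac{B_\gamma\bigl(\log|\cP|+\log(2/\delta)\bigr)}{n}
\quad\text{with}\quad C_\eta=\frac{1}{2\eta}+\frac23,
\]
which gives \eqref{eq:kl-bernstein-linearized}.
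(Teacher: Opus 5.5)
Your proposal is correct and follows the paper's proof essentially step for step. The shared steps are the pointwise bound $(\log r)^2\le B_\gamma(\log r-1+1/r)$ giving $\E_0Z^2\le B_\gamma\KL(p_0^A\Vert p^A)$, Bernstein with range $2\Lambda_\gamma$, a union bound with $t=\log|\cP|+\log(2/\delta)$, and Young's inequality. You additionally prove the pointwise bound, via $\psi(x)=\int_0^1(1-t)e^{tx}\,\dd t$ being increasing, which the paper merely asserts. The only soft spot is your optional aside. In the centered form of Bernstein, $\E_0Z=\KL\ge0$ gives range $\Lambda_\gamma$ only for the upper tail, since for the lower tail $\E_0Z-Z\le\Lambda_\gamma+\KL$. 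Getting $\Lambda_\gamma$ on both sides instead needs the Bennett--Bernstein version stated with uncentered second moments and one-sided bounds on $\pm Z$ themselves. The statement only asks for $2\Lambda_\gamma$, so this affects nothing.
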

The proof is deferred to Appendix~\ref{proof:lem:kl-bernstein}.

The first term on the right-hand side of~\eqref{eq:kl-bernstein-linearized} can be absorbed into the likelihood excess risk. This localization is the source of the fast \(1/n\), rather than \(1/\sqrt n\), stochastic order. We also note that the constant in Lemma 6.1 of~\cite{belomestny-orlova-2025} should be replaced by \(B_\gamma\); this correction does not affect the resulting rate.

To apply Lemma~\ref{lem:kl-bernstein} to the infinite sieve, let
\(
\mathcal P_m^M=\{\pi_\xi^M:\xi\in\cX_m\}
\)
and equip this class with the log-density distance
\(
\mathfrak d_M(p,q):=\norm{\log p-\log q}_{\infty,A_M}.
\)
Let \(\mathcal N_{m,\varepsilon}^M\) be an \(\varepsilon\)-net of \(\mathcal P_m^M\) under \(\mathfrak d_M\), and write
\(\mathcal H_m(\ve,M):= \log |\mathcal N_{m,\varepsilon}^M|\).
The error incurred when replacing a candidate by its nearest net element is measured by
\begin{equation}\label{eq:omega-modulus}
\omega_m(\varepsilon,M)
=
\sup_{p\in\mathcal P_m^M}
\inf_{q\in\mathcal N_{m,\varepsilon}^M}
L(p,q),
\end{equation}
where
\(
L(p,q)
:=2\mathfrak d_M(p,q) + \left| \KL(\pi_0^M\Vert p)-\KL(\pi_0^M\Vert q) \right|
\)
is the approximation loss.
Since
\[
\left|\KL(\pi_0^M\Vert p)
-\KL(\pi_0^M\Vert q) \right|
=\left|
\int_{A_M} 
\pi_0^M(x)\log\frac{q(x)}{p(x)}\,dx
\right|
\leq\mathfrak d_M(p,q),
\]
we have
\(\omega_m(\varepsilon,M)
\le 3\ve\).
Furthermore, Lemma~\ref{lem:ode-stability} gives, 
for any candidates \(\xi,\zeta\in\cX_m\),
\(
\mathfrak d_M(\pi_\xi^M,\pi_\zeta^M) \le C_{\mathrm{st}}(1+M)\norm{\xi-\zeta}_{\infty,[0,U]}
\)
with some \(C_{\mathrm{st}}=C(V,\kappa,K)\).
Combining this stability estimate with the network covering bound of Lemma~\ref{lem:dnn} yields
\begin{multline}
\mathcal H_m(\varepsilon,M)
\leq
\log N\left(
\frac{\varepsilon}{C_{\mathrm{st}}(1+M)},
\mathcal X_m,
\|\cdot\|_{\infty,[0,U]}
\right)\\
\leq
C_{\mathrm{ent}}m\log\left(
\frac{C_{\mathrm{ent}}C_{\mathrm{st}}
m(1+M)}{\varepsilon}
\right).
\end{multline}
We can now extend the finite-family concentration inequality to the full truncated sieve.
\begin{proposition}
\label{prop:truncated-oracle}
Assume \(Y_1,\ldots,Y_n\) are i.i.d. from \(\pi_0^M\) and the likelihood ratios satisfy, for all \(\xi\in\cX_m\), 
\(
 \gamma_M \le \pi_0^M/\pi_\xi^M \le \gamma_M^{-1}
\)
with some 
\(\gamma_M\in(0,1)\). If a data-dependent \(\widetilde\Xi_{n,M}\in\cX_m\) satisfies
\begin{equation}\label{eq:approx-truncated-optimizer}
\frac1n\sum_{i=1}^n\log\pi_{\widetilde\Xi_{n,M}}^M(Y_i)
 \geq
\sup_{\xi\in\cX_m}\frac1n\sum_{i=1}^n\log\pi_\xi^M(Y_i)-\varepsilon_{\mathrm{opt},M},
\end{equation}
then with probability at least $1-\delta$,
\begin{align}\label{eq:truncated-oracle}
\KL(\pi_0^M\Vert\pi_{\widetilde\Xi_{n,M}}^M)
 \leq
C_1\inf_{\xi\in\cX_m}\KL(\pi_0^M\Vert\pi_\xi^M)
 &+C_2\frac{B_{\gamma_M}(\mathcal H_m(\ve,M)+\log(2/\delta))}{n} \nonumber\\
 &\quad +C_3\omega_m(\ve,M)+C_4
 \varepsilon_{\mathrm{opt},M}
\end{align}
with some positive constants \(C_i\), \(i\in\{1,2,3,4\}\).
\end{proposition}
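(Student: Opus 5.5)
The plan is to combine the basic excess-risk decomposition from the start of this section with Lemma~\ref{lem:kl-bernstein}, applied on the finite net \(\mathcal N_{m,\varepsilon}^M\), and then absorb the localized fluctuation terms.

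\emph{Step 1: fix the comparison density.} Pick \(\xi^\ast\in\cX_m\) with \(\KL(\pi_0^M\Vert\pi_{\xi^\ast}^M)\le 2\inf_{\xi\in\cX_m}\KL(\pi_0^M\Vert\pi_\xi^M)\); if the infimum is zero, use an arbitrarily small additive slack instead. Set \(p^\ast=\pi^M_{\xi^\ast}\) and \(\widehat p=\pi^M_{\widetilde\Xi_{n,M}}\). By \eqref{eq:approx-truncated-optimizer} we have \(P_n\log\widehat p\ge P_n\log p^\ast-\varepsilon_{\mathrm{opt},M}\). The displayed identities before Lemma~\ref{lem:kl-bernstein} then give
\[
\KL(\pi_0^M\Vert\widehat p)\le \KL(\pi_0^M\Vert p^\ast)+(P_n-P_0)\log\frac{\pi_0^M}{p^\ast}-(P_n-P_0)\log\frac{\pi_0^M}{\widehat p}+\varepsilon_{\mathrm{opt},M}.
\]

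\emph{Step 2: discretize.} Let \(\widehat q,q^\ast\in\mathcal N^M_{m,\varepsilon}\) be net points nearest to \(\widehat p\) and \(p^\ast\), attaining the infimum in \eqref{eq:omega-modulus} up to slack. Replacing \(\widehat p\) by \(\widehat q\) inside \((P_n-P_0)\log(\pi_0^M/\cdot)\) costs at most \(2\mathfrak d_M(\widehat p,\widehat q)\), since both the empirical and the population averages of \(\log(\widehat q/\widehat p)\) are bounded by the sup-norm on \(A_M\). Replacing \(\KL(\pi_0^M\Vert\widehat q)\) by \(\KL(\pi_0^M\Vert\widehat p)\) costs the KL-difference term in \(L\). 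Hence each replacement costs at most \(L(\cdot,\cdot)\le\omega_m(\varepsilon,M)\), and the same holds for \(p^\ast\). One subtlety is that net elements must also satisfy the ratio bound \(\gamma_M\le\pi_0^M/q\le\gamma_M^{-1}\). The simplest fix is to take the net inside \(\mathcal P^M_m\) itself: this is an internal net of radius \(\varepsilon\), obtained from an external \(\varepsilon/2\)-net, and its log-cardinality is still bounded by \(\mathcal H_m(\varepsilon/2,M)\), which changes only constants inside the logarithm.

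\emph{Step 3: concentrate and absorb.} Apply Lemma~\ref{lem:kl-bernstein} with \(\cP_A=\mathcal N^M_{m,\varepsilon}\), \(A=A_M\) and \(\gamma=\gamma_M\). In the form \eqref{eq:kl-bernstein-linearized} with \(\eta=1/4\), on an event of probability at least \(1-\delta\), every \(q\) in the net satisfies
\[
|(P_n-P_0)\log(\pi_0^M/q)|\le \tfrac14\KL(\pi_0^M\Vert q)+C\,\frac{B_{\gamma_M}(\mathcal H_m+\log(2/\delta))}{n}.
\]
Use this for \(\widehat q\) and \(q^\ast\), and convert back with \(\KL(\pi_0^M\Vert\widehat q)\le\KL(\pi_0^M\Vert\widehat p)+\omega_m\) (similarly for \(q^\ast\)). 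This yields
\[
\tfrac34\KL(\pi_0^M\Vert\widehat p)\le \tfrac54\KL(\pi_0^M\Vert p^\ast)+C'\,\frac{B_{\gamma_M}(\mathcal H_m+\log(2/\delta))}{n}+C''\omega_m+\varepsilon_{\mathrm{opt},M}.
\]
Dividing by \(3/4\) gives \eqref{eq:truncated-oracle} with, for example, \(C_1=10/3\) after accounting for the factor 2 in the choice of \(\xi^\ast\), and with \(C_4=4/3\).

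\emph{Main obstacle.} The key point is the uniformity: the bound must hold for the data-dependent \(\widehat p\). The net argument handles this, because the concentration event is fixed before \(\widehat q\) is selected. The remaining care is in the bookkeeping of the \(\omega_m\) terms, which must be counted both inside the empirical-process term and inside the KL term.
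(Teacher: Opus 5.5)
Your proposal is correct and follows the paper's proof essentially step for step. Both arguments use the same four steps:
\begin{itemize}
\item the basic inequality from the approximate optimizer;
\item replacement of $\widehat p$ and $p^\ast$ by net points, at a cost controlled by $\omega_m(\varepsilon,M)$ via $|\Delta_n(p)-\Delta_n(q)|\le 2\mathfrak d_M(p,q)$;
\item Lemma~\ref{lem:kl-bernstein} applied on the finite net;
\item absorption of the $\eta\,\KL$ terms.
\end{itemize}
You also note explicitly that the net must lie inside $\mathcal P_m^M$, so that the ratio bound and normalization required by the lemma hold for net elements; the paper leaves this implicit. Your multiplicative choice of $p^\ast$, in place of the paper's additive slack, only changes the value of $C_1$.
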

The proof is in Appendix~\ref{proof:prop:truncated-oracle}. 

The four terms in~\eqref{eq:truncated-oracle} represent, respectively, the sieve approximation error, the stochastic complexity, the discretization error and the numerical optimization error. We keep \(M\) and \(\varepsilon\) arbitrary at this stage. Their choices are made only after transferring~\eqref{eq:truncated-oracle} to the full likelihood and balancing the stochastic and truncation errors.

\subsection{Transfer to the full likelihood and control of truncation errors}\label{subsec:truncated-full}

To transfer to the full experiment, one needs to evaluate the loss of considering the truncated one. Define 
\(\bar\tau_m(M) :=\sup_{\xi\in\cX_m\cup\{\Xi_0\}}\tau_\xi(M)\), 
\(d_m(M) :=-\log(1-\bar\tau_m(M))\), 
and
\begin{equation}\label{eq:tail-log-remainder}
 \bar R_m(M) := \sup_{\xi\in\cX_m}\int_{A_M^c}\pi_0(x)\left|\log\frac{\pi_0(x)}{\pi_\xi(x)}\right|\,\dd x.
\end{equation}
The following lemma presents an exact decomposition of the full-KL divergence into the sum of truncated one and the quantities above.

\begin{lemma}
\label{lem:full-truncated-kl}
For every candidate $\xi$, and \(M>v_\star\),
\begin{eqnarray}\label{eq:exact-full-truncated-kl}
 \KL(\pi_0\Vert\pi_\xi)
 &=& (1-\tau_0(M))\KL(\pi_0^M\Vert\pi_\xi^M) + (1-\tau_0(M))\log\frac{1-\tau_0(M)}{1-\tau_\xi(M)} + R_\xi(M)\\
 &\leq& \KL(\pi_0^M\Vert\pi_\xi^M)+d_m(M)+\bar R_m(M),\label{eq:full-from-truncated}
\end{eqnarray}
where $R_\xi(M):=\int_{A_M^c}\pi_0(x) \log(\pi_0(x)/\pi_\xi(x))\,\dd x$. Moreover, 
\begin{equation}\label{eq:truncated-from-full}
\KL(\pi_0^M\Vert\pi_\xi^M)
\leq \frac{1}{c_M}\left(
\KL(\pi_0\Vert\pi_\xi)+d_m(M)+\bar R_m(M)
\right)
\end{equation}
with \(c_M:=\int_{A_M} e^{-K(V(x)-v_\star)}\,dx\left(\int_{\R^d} e^{-\kappa(V(x)-v_\star)}\,dx\right)^{-1}\in(0,1)\). 
\end{lemma}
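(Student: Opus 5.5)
The plan is to prove the exact identity by splitting the KL integral over \(A_M\) and \(A_M^c\). The two inequalities then follow by bounding the normalisation term and the tail term from above and from below, and by bounding \(1-\tau_0(M)\) from below with the exponential envelope \eqref{eq:exp-envelope_gen}.

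\emph{Step 1 (exact identity).} Write
\(\KL(\pi_0\Vert\pi_\xi)=\int_{A_M}\pi_0\log(\pi_0/\pi_\xi)\,\dd x+R_\xi(M)\).
On \(A_M\), the definition of the truncated densities gives \(\pi_0=(1-\tau_0(M))\pi_0^M\) and \(\pi_\xi=(1-\tau_\xi(M))\pi_\xi^M\). Hence
\[
\log\frac{\pi_0}{\pi_\xi}=\log\frac{\pi_0^M}{\pi_\xi^M}+\log\frac{1-\tau_0(M)}{1-\tau_\xi(M)}\quad\text{on }A_M.
\]
Integrating against \(\pi_0=(1-\tau_0(M))\pi_0^M\), and using \(\int_{A_M}\pi_0^M=1\), produces the first two terms of \eqref{eq:exact-full-truncated-kl}. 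All integrals are finite by the two-sided envelopes, since \(\log\pi_\xi\) grows at most linearly in \(V\) and \(e^{-aV}\) is integrable.

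\emph{Step 2 (upper bound \eqref{eq:full-from-truncated}).} Take \(\xi\in\cX_m\), which is what makes the suprema defining \(d_m(M)\) and \(\bar R_m(M)\) applicable.
\begin{itemize}
\item Since \(\KL(\pi_0^M\Vert\pi_\xi^M)\ge0\) and \(1-\tau_0(M)\le1\), the first term is at most \(\KL(\pi_0^M\Vert\pi_\xi^M)\).
\item For the second term, \(\log(1-\tau_0(M))\le0\) and \(\tau_\xi(M)\le\bar\tau_m(M)\). Therefore
\(
(1-\tau_0)\log\frac{1-\tau_0}{1-\tau_\xi}\le -(1-\tau_0)\log(1-\tau_\xi)\le -\log(1-\bar\tau_m(M))=d_m(M).
\)
\item Finally, \(R_\xi(M)\le\int_{A_M^c}\pi_0\,|\log(\pi_0/\pi_\xi)|\le\bar R_m(M)\).
\end{itemize}

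\emph{Step 3 (reverse bound \eqref{eq:truncated-from-full}).} Solve the identity for the first term:
\[
(1-\tau_0)\KL(\pi_0^M\Vert\pi_\xi^M)=\KL(\pi_0\Vert\pi_\xi)-(1-\tau_0)\log\frac{1-\tau_0}{1-\tau_\xi}-R_\xi(M).
\]
\begin{itemize}
\item Because \(-\log(1-\tau_\xi)\ge0\), the subtracted normalisation term satisfies
\((1-\tau_0)\log\frac{1-\tau_0}{1-\tau_\xi}\ge(1-\tau_0)\log(1-\tau_0)\ge\log(1-\bar\tau_m(M))=-d_m(M)\).
Here \(\Xi_0\) is included in the supremum defining \(\bar\tau_m\).
\item We also have \(-R_\xi(M)\le\bar R_m(M)\).
\end{itemize}
This gives \((1-\tau_0)\KL(\pi_0^M\Vert\pi_\xi^M)\le \KL(\pi_0\Vert\pi_\xi)+d_m(M)+\bar R_m(M)\).

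It remains to show \(1-\tau_0(M)\ge c_M\). The lower envelope in \eqref{eq:exp-envelope_gen} gives \(\pi_0(x)\ge Z_\kappa^{-1}e^{-K(V(x)-v_\star)}\). Integrating over \(A_M\) yields exactly \(1-\tau_0(M)=\int_{A_M}\pi_0\ge c_M\), and dividing by \(1-\tau_0(M)\) completes the bound. The constant satisfies \(c_M>0\) because \(A_M\) contains a neighbourhood of a minimiser of \(V\). It satisfies \(c_M<1\) because \(K>\kappa\) forces \(e^{-K(V-v_\star)}<e^{-\kappa(V-v_\star)}\) wherever \(V>v_\star\), and \(A_M\) is a proper subset of \(\R^d\).

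The only step needing care is tracking the signs in the normalisation term for the reverse bound. This is why \(\Xi_0\) is included in the supremum defining \(\bar\tau_m\), and why \(R_\xi(M)\) is controlled through its absolute-value version \(\bar R_m(M)\) rather than its sign.
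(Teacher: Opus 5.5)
Your proposal is correct and follows essentially the same route as the paper. It splits the KL integral over \(A_M\) and \(A_M^c\) to get the exact identity, then controls the normalisation term by \(d_m(M)\) and the tail term by \(\bar R_m(M)\). The reverse bound uses \(1-\tau_0(M)\ge c_M\) from the lower envelope \eqref{eq:exp-envelope_gen}. The only cosmetic difference is that you use the one-sided estimate needed in each direction, where the paper states the two-sided bound \(\bigl|(1-\tau_0)\log\frac{1-\tau_0}{1-\tau_\xi}\bigr|\le d_m(M)\) once.
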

The proof is in Appendix~\ref{proof:lem:full-truncated-kl}.

\begin{remark}
    For \(M\) large enough it holds that \(c_\infty/2\leq c_M\leq c_\infty\) with \(c_\infty:=Z_K/Z_\kappa\). Indeed, one observes that \(c_M\to c_\infty\) as \(M\to\infty\), while for any \(M\) fixed it holds 
    \[
    c_\infty-c_M
    =\frac{1}{Z_\kappa} 
    \int_{\{V(x)-v_\star>M-v_\star\}}
    e^{-K(V(x)-v_\star)}\,dx
    \leq \frac{Z_a}{Z_\kappa} e^{-(K-a)(M-v_\star)},
    \quad \forall\,a<K.
    \]
    Hence, \(c_\infty-Z_a Z_\kappa^{-1}e^{-(K-a)(M-v_\star)} \leq c_M\leq c_\infty\), with the left bound converging to \(c_\infty\) as \(M\) grows. In particular, the choice \(M_n\asymp \log n\) yields \(c_M\asymp c_\infty-O(n^{-\text{const} (K-a)})\). For \(M\) large enough the factor \(c_M^{-1}\) in~\eqref{eq:truncated-from-full} can thus be absorbed into a generic constant \(C_{V,\kappa,K}\).
\end{remark}

We now transfer the truncated oracle inequality~\eqref{eq:truncated-oracle} to the full MLE. For this,
we work on the no-escape event
\(
E_M:=\{X_1,\ldots,X_n\in A_M\},
\)
for which under the considered assumptions it holds that
\(
\mathbb P(E_M^c)\leq n\tau_0(M).
\)
On \(E_M\), the full and truncated likelihoods differ only through their
normalizing constants. Proposition~\ref{prop:full-mle-transfer} combines this observation with
the full--truncated KL decomposition of Lemma~\ref{lem:full-truncated-kl}.
\begin{proposition}
\label{prop:full-mle-transfer}
Let $X_1,\ldots,X_n$ be i.i.d. from $\pi_0$, and let $\widehat\Xi_n$ be the full MLE over $\cX_m$. Then with probability at least $(1-\delta)(1-\tau_0(M))^n\geq 1-\delta-n\tau_0(M)$ it holds that
\begin{align}\label{eq:full-mle-oracle-transfer}
 \KL(\pi_0\Vert\pi_{\widehat\Xi_n})
 &\leq \frac{C_1}{c_M}\inf_{\xi\in\cX_m}\KL(\pi_0\Vert\pi_\xi)
 +C_2\frac{B_{\gamma_M}(\mathcal H_m(\ve,M)+\log(2/\delta))}{n} \nonumber\\
 & +C_3\omega_m(\ve,M)
 +C_4\left(
 1+\frac{1}{c_M}
 \right)d_m(M)
 +C_5\left(
 1+\frac{1}{c_M}
 \right)\bar R_m(M)
\end{align}
with some constants \(C_i>0\), \(i\in\{1,...,5\}\), and \(c_M\) defined as in Lemma~\ref{lem:full-truncated-kl}.
\end{proposition}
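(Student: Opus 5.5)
The plan is to work on the no-escape event \(E_M=\{X_1,\ldots,X_n\in A_M\}\) and to view the full MLE there as an approximate maximizer of the truncated likelihood. The oracle inequality~\eqref{eq:truncated-oracle} then applies, and Lemma~\ref{lem:full-truncated-kl} moves the result back to the full KL divergence.

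\textbf{Step 1 (conditioning).} Conditionally on \(E_M\), the observations \(X_1,\ldots,X_n\) are i.i.d.\ from \(\pi_0^M\). This follows because \(E_M\) is a product event and \(\pi_0^M\) is the law of \(X\) conditioned on \(A_M\). Moreover,
\[
\Prob(E_M)=(1-\tau_0(M))^n \geq 1-n\tau_0(M).
\]
Once we have a conditional bound holding with probability at least \(1-\delta\), the unconditional probability is at least \((1-\delta)(1-\tau_0(M))^n\). This is at least \(1-\delta-n\tau_0(M)\), by \((1-\delta)(1-x)\geq 1-\delta-x\).

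\textbf{Step 2 (approximate optimality).} On \(E_M\), for every \(\xi\in\cX_m\),
\[
\frac1n\sum_{i=1}^n\log\pi_\xi^M(X_i)=\frac1n\sum_{i=1}^n\log\pi_\xi(X_i)-\log(1-\tau_\xi(M)).
\]
Since \(\widehat\Xi_n\) maximizes the full log-likelihood, we get
\[
\frac1n\sum_{i=1}^n\log\pi^M_{\widehat\Xi_n}(X_i)\geq \sup_{\xi\in\cX_m}\frac1n\sum_{i=1}^n\log\pi^M_\xi(X_i)+\log(1-\tau_\xi(M))-\log(1-\tau_{\widehat\Xi_n}(M)).
\]
The term \(\log(1-\tau_\xi(M))\) is at least \(-d_m(M)\), and \(-\log(1-\tau_{\widehat\Xi_n}(M))\geq 0\). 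Hence \eqref{eq:approx-truncated-optimizer} holds with \(\varepsilon_{\mathrm{opt},M}=d_m(M)\). Proposition~\ref{prop:truncated-oracle} therefore gives, conditionally with probability at least \(1-\delta\),
\[
\KL(\pi_0^M\Vert\pi^M_{\widehat\Xi_n})\leq C_1\inf_{\xi}\KL(\pi_0^M\Vert\pi_\xi^M)+C_2\frac{B_{\gamma_M}(\mathcal H_m+\log(2/\delta))}{n}+C_3\omega_m+C_4d_m(M).
\]
The ratio hypothesis \(\gamma_M\) of that proposition is taken as given, as in the statement.

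\textbf{Step 3 (back to full KL).} We bound the left-hand side from below by \eqref{eq:full-from-truncated} applied to \(\xi=\widehat\Xi_n\):
\[
\KL(\pi_0\Vert\pi_{\widehat\Xi_n})\leq \KL(\pi_0^M\Vert\pi^M_{\widehat\Xi_n})+d_m(M)+\bar R_m(M).
\]
We bound the infimum on the right using \eqref{eq:truncated-from-full}:
\[
\inf_\xi\KL(\pi_0^M\Vert\pi_\xi^M)\leq c_M^{-1}\Bigl(\inf_\xi\KL(\pi_0\Vert\pi_\xi)+d_m(M)+\bar R_m(M)\Bigr).
\]
Collecting terms yields \eqref{eq:full-mle-oracle-transfer}, with the \(d_m\) and \(\bar R_m\) contributions carrying the factor \((1+c_M^{-1})\) after enlarging the constants.

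\textbf{Main obstacle.} The delicate point is Step 1: the oracle proposition needs a sample that is i.i.d.\ from \(\pi_0^M\), whereas \(\widehat\Xi_n\) is defined from the full sample. The plan is to make the argument rigorous by coupling. Let \(Y_i\) be i.i.d.\ from \(\pi_0^M\) and set \(X_i=Y_i\) on \(E_M\). Apply the proposition to the \(Y\)-sample, using the approximate maximizer \(\widetilde\Xi_{n,M}\) that coincides with \(\widehat\Xi_n\) on \(E_M\). Then intersect the two events, using independence of the \(Y\)-sample and the escape indicator to obtain the product probability bound. The inequality from Step 2 is deterministic on \(E_M\), so no further randomness issues arise.
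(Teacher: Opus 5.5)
Your proposal is correct and follows the paper's proof essentially step by step. It conditions on $E_M$, under which the $X_i$ are i.i.d.\ from $\pi_0^M$, shows that the full MLE is a $d_m(M)$-approximate maximizer of the truncated likelihood, applies Proposition~\ref{prop:truncated-oracle}, transfers back via both directions of Lemma~\ref{lem:full-truncated-kl}, and concludes with $\Prob(G_M\cap E_M)=\Prob(G_M\mid E_M)\,\Prob(E_M)$. The coupling you flag as the main obstacle is valid but not needed: conditioning i.i.d.\ variables on a product event already gives an i.i.d.\ sample from $\pi_0^M$, which is all the paper uses.
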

The proof is in Appendix~\ref{proof:prop:full-mle-transfer}.

We can now choose the truncation level and net resolution. Let
\(
M_n=v_\star+C_{\mathrm{tr}}\log n
\)
with \(C_{\mathrm{tr}}\) sufficiently large
and
\(
\varepsilon_n=n^{-2},
\)
then
\(
\omega_m(\varepsilon_n,M_n)\leq3n^{-2}
\),
and
\(
\mathcal H_m(\varepsilon_n,M_n)\lesssim m\log n.
\)
Moreover, since as shown in Lemma~\ref{lem:explicit-tails} of Supplementary Material, 
\(d_m(M_n)\), \(\overline R_m(M_n)\), and \(n\tau_0(M_n)\) decay exponentially in \(M_n\), the corresponding terms become negligible. As by the same lemma \(B_{\gamma_M}=O(1)\) in the compact case, when no truncation \(M_n\) is needed, and \(B_{\gamma_M}\leq O(\log n)\) in the non-compact one, the leading stochastic term in the full oracle inequality is of order \(m\log n/n\) and \(m(\log n)^2/n\) in the compact and noncompact cases, respectively.

\section{Upper Rates: Exact Forward Approximation and Nonlinear Inversion}
\label{sec:upper-exact}

The oracle inequality of Proposition~\ref{prop:full-mle-transfer} reduces the statistical analysis of the MLE to two model-specific questions. First, one must evaluate the approximation term \(\inf_{\xi\in\cX_m}\KL(\pi_0\Vert\pi_\xi)\). Second, because the ultimate target is the coefficient \(\Xi_0\), one must determine how closeness of the invariant densities controls closeness of their coefficients. These are respectively the forward and inverse sides of the same nonlinear map \(\xi\mapsto\pi_\xi\). The present section studies that map in coordinates adapted to the stationary equation.

Define the inverse profile \(s_\xi(r):=q_\xi^{-1}(r)=\mu_\xi-g_\xi(r)\). It then holds that \(q_\xi(s_\xi(r))=r\) and \(s_\xi'(r)=-\rho_\xi(r)\) with \(\rho_\xi(r):=(r\xi(r))^{-1}\). These coordinates expose the basic geometry of the problem. In the forward direction, perturbations of \(\xi\) enter \(s_\xi\) through the primitive of \(\rho_\xi\); the forward map therefore smooths the coefficient perturbation by one integration. In the inverse direction, recovering \(\xi\) from the density requires differentiating the inverse profile, since \(\xi(r)=-(rs'_\xi(r))^{-1}\). This differentiation is unstable without regularity, which is why the inverse estimate will be governed by a H\"older interpolation modulus rather than by a uniform linear inverse inequality.

\subsection{Exact forward approximation and density rates}
We first use these exact coordinates to evaluate the approximation term left open in Section~\ref{sec:klbernstein}.

\begin{proposition}
\label{prop:exact-forward-approx}
Suppose that the assumptions of Lemma~\ref{lem:dnn} hold, and let \(\xi_m\), \(m\geq m_0\geq 1\), be the corresponding graded approximant. Define \(G_m(r)=g_{\xi_m}(r)-g_{\Xi_0}(r)\) 
and let \(\rho_t=(1-t)\rho_{\Xi_0}+t\rho_{\xi_m}\) and \(\xi_t(r)=(r\rho_t(r))^{-1}\) for \(0\leq t\leq1\). Finally, let \(\Pi_t\) be the stationary law for \(\xi_t\). Then it holds that
\begin{equation}
 \norm{G_m}_{\infty,[0,U]} =o(1), \quad
 \sup_{0\leq t\leq1}\norm{G_m}_{L^2(\nu_t)} \leq C_{\mathrm{path}} m^{-(\beta+1)},
\end{equation}
and, consequently, given~\eqref{eq:app_fwd}, 
\(\KL(\pi_0\Vert\pi_{\xi_m}) \leq C_{\mathrm{KL}} m^{-2(\beta+1)}\).
Here  
\(C_{\mathrm{path}},C_{\mathrm{KL}}\) are constants depending on \(\kappa\), K, as well as 
\(C_{\mathrm{app}},
C_{\mathrm{fwd}}\)
from Lemma~\ref{lem:dnn}, and independent of \(m\).
\end{proposition}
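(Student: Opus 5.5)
The strategy is to work entirely in the primitive coordinates. There $g_\xi$ is linear in $\rho_\xi$, so the path $\rho_t$ interpolates linearly: $g_{\xi_t}=g_0+tG_m$, with $G_m(r)=\int_1^r(\rho_{\xi_m}-\rho_{\Xi_0})(u)\,\dd u$.

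\textbf{Step 1: admissibility of the path and uniform smallness of $G_m$.}
Since $\rho_t$ is a convex combination of functions lying between $(rK)^{-1}$ and $(r\kappa)^{-1}$, each $\xi_t$ satisfies $\kappa\le\xi_t\le K$. It is also Lipschitz, uniformly in $t$, because $\xi_m$ and $\Xi_0$ are. Hence Proposition~\ref{prop:invariant-general} and the envelopes~\eqref{eq:exp-envelope_gen} apply uniformly along the path, and the measures $\nu_t$ (laws of $\pi_{\xi_t}(X)$ under $\Pi_t$) obey the log-type bound~\eqref{eq:nu-zero-upper} with constants independent of $t$ and $m$.

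Next write
\[
\rho_{\xi_m}-\rho_{\Xi_0}=-\frac{\xi_m-\Xi_0}{u\,\xi_m\Xi_0}.
\]
On each cell $J_{j,m}$, the exact moment cancellation~\eqref{eq:approximant-moments} kills the linearized part $-(\xi_m-\Xi_0)w_0$. What remains is the quadratic remainder
\[
\frac{(\xi_m-\Xi_0)^2}{u\,\xi_m\Xi_0^2}=O\!\left(m^{-2\beta}u^{-1}\right).
\]
So at the knots, $G_m=-\cK(\xi_m-\Xi_0)+O(m^{-2\beta}\log)$. Within a cell, $G_m$ differs from its knot value by at most $\int_{J_{j,m}}|h_m|/u$, where $h_m:=\xi_m-\Xi_0$. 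On cells away from $0$ this is $\lesssim m^{-\beta}\,|J_{j,m}|/r_{j,m}$. The first cell requires care because of the $1/u$ singularity. There I would use that the graded construction forces $|h_m(u)|\lesssim u^\beta$ on $[0,\Delta_{1,m}]$, which gives $\int_0^\Delta|h_m|/u\lesssim\Delta^\beta$. With $\Delta_{1,m}\asymp m^{-\vartheta}$, this yields $\|G_m\|_\infty=o(1)$, and it also prevents $G_m$ from diverging as $r\to 0$.

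\textbf{Step 2: the $L^2(\nu_t)$ bound.}
Rather than redo the approximation, I would compare with the forward bound in~\eqref{eq:app_fwd}. Since $\cA h=\Xi_0(c_0(h)+\cK h)$ and $c_0(h)$ is a constant equal to minus a weighted mean of $\cK h$, I would try to extract $\|\cK h_m\|_{L^2(\nu_0)}\lesssim m^{-(\beta+1)}$ directly from the cellwise argument in the proof of Lemma~\ref{lem:dnn}, rather than invert $\cA$. The difference $G_m+\cK h_m$ is the quadratic remainder; its $L^2(\nu_0)$ norm is $O(m^{-2\beta}\cdot\mathrm{polylog})$, which is $o(m^{-(\beta+1)})$ since $\beta>1$. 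The polylogarithm comes from the $\log(1/r)$ growth and $\nu_0$'s log weight, and I would need to check that it is absorbed, possibly by using the first-cell bound again. To pass from $\nu_0$ to $\nu_t$, I would bound $\dd\nu_t/\dd\nu_0$ on compact $r$-ranges, using the density $m_V(s_t(r))/\xi_t(r)$, where $s_t-s_0=t(\mu_t-\mu_0)-tG_m$ is uniformly $o(1)$ by Step 1 (after showing that $\mu_t-\mu_0=O(\|G_m\|_\infty)$ via the normalization). Near $r=0$, I would instead use the common log bound~\eqref{eq:nu-zero-upper} together with the pointwise first-cell estimate. The supremum over $t$ then follows.

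\textbf{Step 3: the KL bound.}
Set $\phi(t)=\KL(\pi_0\Vert\Pi_t)$, so that $\phi(0)=\phi'(0)=0$. Along the path, $\log\pi_{\xi_t}(x)=\log q_t(V(x))$ with $g_0(q_t)+tG_m(q_t)=\mu_t-V$. Differentiating gives
\[
\partial_t\log\pi_{\xi_t}=\xi_t(\pi_t)\bigl(\dot\mu_t-G_m(\pi_t)\bigr),
\]
which is the exact analogue of~\eqref{eq:stat-time}, now with no linearization error. A second derivative produces terms quadratic in $G_m(\pi_t)$ and $\dot\mu_t$, plus a term involving $G_m'=\rho_m-\rho_0$ multiplied by $\partial_t\pi_t$. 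Then $\phi(1)\le\frac12\sup_t|\phi''(t)|$. Each term is bounded by $C\sup_t\|G_m\|^2_{L^2(\nu_t)}$, using $|\dot\mu_t|\lesssim\|G_m\|_{L^2(\nu_t)}$ (by differentiating the normalization, as in~\eqref{eq:c0-functional}). The $G_m'$ term requires $|rG_m'(r)|\lesssim\|h_m\|_\infty=o(1)$, so it contributes $o(1)\cdot\|G_m\|^2$. This gives $\KL\le C_{\mathrm{KL}}m^{-2(\beta+1)}$.

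The main obstacle is Step 2 near $r\to0$: controlling the quadratic remainder and the change of measure $\nu_t$ versus $\nu_0$ uniformly in $t$, despite the $1/u$ weight and the log-growing density. That is where the graded mesh and Assumption~\ref{ass:log} must be used carefully.
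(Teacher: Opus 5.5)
Your Steps 1--2 follow the paper's skeleton: $G_m=-\cK h_m$ plus a quadratic remainder, the exact score $\partial_t\log\pi_t=\xi_t(\pi_t)(\dot\mu_t-G_m(\pi_t))$, and $\abs{\dot\mu_t}\lesssim\norm{G_m}_{L^2(\nu_t)}$. Step 3, however, has a genuine gap.

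\textbf{Step 3.} The term you describe as $G_m'$ times $\partial_t\pi_t$ is \emph{linear} in the score, not quadratic, because $\partial_t\pi_t=\pi_t\dot\ell_t$. Two further terms are missing. First, $\xi_t=(r\rho_t)^{-1}$ depends on $t$ explicitly, with $\partial_t\xi_t(r)=-\xi_t(r)^2\,rG_m'(r)$. Second, there is a $\ddot\mu_t$ term. Altogether
\[
\partial_t\dot\ell_t=-2\,\xi_t(\pi_t)\,\pi_tG_m'(\pi_t)\,\dot\ell_t+\xi_t(\pi_t)\,\ddot\mu_t+\frac{\pi_t\,\xi_t'(\pi_t)}{\xi_t(\pi_t)}\,\dot\ell_t^2 .
\]
Here $\sup_r\abs{rG_m'(r)}\le\kappa^{-2}\norm{h_m}_\infty\asymp m^{-\beta}$. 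The quantity $\ddot\mu_t$, obtained by differentiating the normalisation twice, is in general of size $m^{-\beta}\norm{\dot\ell_t}_{L^2(\Pi_t)}$.

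Bounding $\phi''(t)=-\E_0[\partial_t\dot\ell_t]$ term by term therefore gives only $m^{-\beta}\cdot m^{-(\beta+1)}=m^{-(2\beta+1)}$. This is a factor $m$ short of $m^{-2(\beta+1)}$, so your claim that this term is $o(1)\cdot\norm{G_m}^2$ fails.

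The first-order terms cancel only through the identity $\E_{\Pi_t}[\partial_t\dot\ell_t]=-\E_{\Pi_t}[\dot\ell_t^2]$. To use it, write $\phi''(t)=\E_{\Pi_t}[\dot\ell_t^2]+\int(\pi_t-\pi_0)\,\partial_t\dot\ell_t\,\dd x$, and control the remainder with $\norm{\pi_0/\pi_t-1}_\infty\lesssim\norm{G_m}_\infty$.

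The paper avoids second derivatives altogether. It bounds
\[
\chi^2(\Pi_1\Vert\Pi_0)^{1/2}\le\int_0^1\norm{(\pi_u/\pi_0)\dot\ell_u}_{L^2(\Pi_0)}\dd u\le\sup_u\norm{\pi_u/\pi_0}_\infty^{1/2}\,\sup_u\norm{\dot\ell_u}_{L^2(\Pi_u)},
\]
and then uses $\KL(\pi_0\Vert\pi_{\xi_m})\le\chi^2(\pi_0\Vert\pi_{\xi_m})\le e^{\norm{\log(\pi_{\xi_m}/\pi_0)}_\infty}\chi^2(\pi_{\xi_m}\Vert\pi_0)$. Only first-order scores in $L^2(\Pi_t)$ are needed, which is exactly why the uniform-in-$t$ bound on $\norm{G_m}_{L^2(\nu_t)}$ is proved.

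\textbf{Step 2.} The transfer from $\nu_0$ to $\nu_t$ also has a problem. The ratio $\dd\nu_t/\dd\nu_0=m_V(s_t(r))\Xi_0(r)/(m_V(s_0(r))\xi_t(r))$ is not controlled near the mode, for two reasons. The measure $\nu_t$ lives on $(0,a_t)$ and $\nu_0$ on $(0,a_0)$, so $\nu_t\not\ll\nu_0$ whenever $a_t>a_0$. Also, $m_V$ need not be bounded away from $0$ or $\infty$ near $v_\star$ or other critical values; for the OU potential $m_V(s)\propto(s-1)^{d/2-1}$. Your split into compact ranges and $r\to0$ leaves exactly this region uncovered.

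There are two easy repairs.
\begin{itemize}
\item Bound each $\nu_t$ directly. On $[r_0,U]$ the cellwise estimates give $\sup\abs{G_m}\lesssim m^{-(\beta+1)}$, which suffices because $\nu_t$ is a probability measure. On $(0,r_0]$, the cellwise bounds on $\cK h_m$ from the proof of Lemma~\ref{lem:dnn}, together with the uniform bound on the quadratic remainder, can be integrated against $\nu_t$ directly, since~\eqref{eq:nu-zero-upper} holds uniformly in $t$.
\item Compare in $x$-space, as the paper does, where the supports coincide. Write $\norm{G_m}_{L^2(\nu_t)}=\norm{G_m\circ\pi_t}_{L^2(\Pi_t)}$. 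With $\varepsilon_m:=\norm{G_m}_\infty$, the normalisation gives $\abs{\mu_t-\mu_0}\le t\varepsilon_m$, hence $\abs{\log(\pi_t/\pi_0)}\le 2K\varepsilon_m$ pointwise. Then $\abs{G_m(\pi_t)-G_m(\pi_0)}\le\sup_r\abs{rG_m'(r)}\,\abs{\log(\pi_t/\pi_0)}\lesssim m^{-\beta}\varepsilon_m$.
\end{itemize}

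A minor slip: $s_t-s_0=(\mu_t-\mu_0)-tG_m$, without a factor $t$ on the first term.
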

The proof is in Appendix~\ref{proof:prop:exact-forward-approx}.

Given this result and the discussion at the end of Subsection~\ref{subsec:truncated-full}, we can state the final rate for~\eqref{eq:full-mle-oracle-transfer}.

\begin{theorem}
\label{thm:oracle}
Assume that Assumptions~\ref{ass:potential} and~\ref{ass:coefficient} and the conditions of
Lemma~\ref{lem:dnn} hold, and
let $\widehat\Xi_n$ be the full MLE over the graded ReQU sieve \(\cX_m\) defined in \eqref{eq:smooth-dnn-sieve}. Fix \(c_a<\kappa\) and choose \(\ve>0\) such that \(\Xi_0(0)-\ve>c_a\). Finally, let \(M_n=v_\star+C_{tr}\log n\) with some \(C_{tr}>0\) such that \(C_{tr}c_a>1\).
Then with probability at least 
\(
(1-\delta)(1-\tau_0(M_n))^n
\)
it holds that
\begin{multline}\label{eq:oracle}
 \KL(\pi_0\Vert\pi_{\widehat\Xi_n})\\
 \leq C_\mathrm{or}
 \left( 
 m^{-2(\beta+1)} +\frac{b_n(m\log n+\log(1/\delta))}{n} +d_m(M_n)+\bar R_m(M_n) 
 \right),
\end{multline}
where
\(C_\mathrm{or}
=C(C_\mathrm{KL},
C_\mathrm{ent}, 
C_\mathrm{st},c_\infty,
C_\mathrm{tr})\) 
is a fixed constant, and
\(b_n=O(1)\) in the compact case and \(b_n\leq O(\log n)\) in the non-compact one.
Choosing \(m\asymp\left(n/(b_n\log n)\right)^{1/(2\beta+3)}\) yields
\begin{equation}\label{eq:kl-rate}
 \KL(\pi_0\Vert\pi_{\widehat\Xi_n}) = O_{\Prob}\left( (b_n\log n/n)^{2(\beta+1)/(2\beta+3)} \right).
\end{equation}
\end{theorem}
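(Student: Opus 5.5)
The plan is to plug the forward approximation bound and the entropy bound into Proposition~\ref{prop:full-mle-transfer} with \(M=M_n\) and \(\varepsilon=\varepsilon_n=n^{-2}\), then balance in \(m\).

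\emph{Approximation term.} Proposition~\ref{prop:exact-forward-approx} produces \(\xi_m\in\cX_m\) with \(\KL(\pi_0\Vert\pi_{\xi_m})\le C_{\mathrm{KL}}m^{-2(\beta+1)}\). Hence the infimum in~\eqref{eq:full-mle-oracle-transfer} is at most that quantity. The remark after Lemma~\ref{lem:full-truncated-kl} shows that \(c_{M_n}\ge c_\infty/2\) for \(n\) large, so \(C_1/c_{M_n}\) and \(1+1/c_{M_n}\) are bounded by constants depending on \(c_\infty\).

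\emph{Stochastic and discretization terms.} By the entropy bound derived from Lemma~\ref{lem:ode-stability} and Lemma~\ref{lem:dnn},
\[
\mathcal H_m(n^{-2},M_n)\le C_{\mathrm{ent}}m\log\bigl(C_{\mathrm{ent}}C_{\mathrm{st}}m(1+M_n)n^2\bigr)\lesssim m\log n,
\]
since \(m\le n\) and \(M_n\lesssim\log n\). Also \(\omega_m(n^{-2},M_n)\le 3n^{-2}\), which is dominated by \(m\log n/n\).

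\emph{Bernstein constant.} The remaining input is \(B_{\gamma_{M_n}}=:b_n\). The plan is to compute \(\gamma_M\) from the envelope~\eqref{eq:exp-envelope_gen}. On \(A_M\), both \(\log\pi_0\) and \(\log\pi_\xi\) lie between \(-\log Z_\kappa-K(M-v_\star)\) and \(-\log Z_K\), and the normalizations \(1-\tau(M)\) are bounded below. So \(\Lambda_{\gamma_M}\lesssim 1+(K-\kappa)(M-v_\star)\), and \(B_\gamma\le\Lambda_\gamma^2/(\Lambda_\gamma-1+\gamma)\lesssim\Lambda_\gamma\) once \(\Lambda_\gamma\ge2\). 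This gives \(b_n\lesssim\log n\). In the compact case no truncation is needed, \(\Lambda\) is bounded, and \(b_n=O(1)\); this is the content invoked from Lemma~\ref{lem:explicit-tails}. The condition \(C_{tr}c_a>1\) is used there: the envelope gives \(\tau_0(M_n)\lesssim e^{-c_a(M_n-v_\star)}=n^{-C_{tr}c_a}\), so \(n\tau_0(M_n)\to0\). Likewise \(d_m(M_n)\) and \(\bar R_m(M_n)\) are polynomially small, using log-ratios that are linear in \(V\) and integrated against an exponential tail. Their exact decay order matters only for the final \(O_\Prob\) statement, so I would keep them explicit in~\eqref{eq:oracle} as the statement does.

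\emph{Assembly and balancing.} The terms above give~\eqref{eq:oracle} on the event of probability \((1-\delta)(1-\tau_0(M_n))^n\). For~\eqref{eq:kl-rate}, set \(m\asymp(n/(b_n\log n))^{1/(2\beta+3)}\). This equates \(m^{-2(\beta+1)}\) with \(b_nm\log n/n\), both of order \((b_n\log n/n)^{2(\beta+1)/(2\beta+3)}\). Choosing \(C_{tr}\) large enough makes \(d_m+\bar R_m=o\) of this rate. Fixing \(\delta\) and letting \(n\tau_0(M_n)\to0\) turns the high-probability bound into an \(O_\Prob\) bound.

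The main obstacle I expect is the uniform control of \(\gamma_M\), and hence \(b_n\), over the whole sieve. The likelihood ratio must be bounded on \(A_M\) uniformly in \(\xi\), including the mode mismatch \(a_\xi\ne a_0\). I would handle this via the two-sided envelope together with \(Z_\kappa^{-1}\le a_\xi\le Z_K^{-1}\), and make sure \(M_n\) is large enough that \(c_{M_n}\) stays bounded away from zero.
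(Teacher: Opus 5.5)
Your proposal is correct and follows essentially the same route as the paper. The paper inserts into Proposition~\ref{prop:full-mle-transfer}, with \(c_{M_n}\geq c_\infty/2\):
\begin{itemize}
\item the approximation bound of Proposition~\ref{prop:exact-forward-approx};
\item the entropy bound \(\mathcal H_m(n^{-2},M_n)\lesssim m\log n\), together with \(\omega_m\le 3n^{-2}\);
\item the envelope-based bounds of Lemma~\ref{lem:explicit-tails} on \(B_{\gamma_{M_n}}\), \(d_m\), \(\bar R_m\) and \(\tau_0\);
\end{itemize}
and then balances in \(m\).

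One minor point: you do not need to take \(C_{tr}\) larger. The stated conditions \(C_{tr}c_a>1\) and \(\Xi_0(0)-\ve>c_a\) already give \(d_m(M_n)+\bar R_m(M_n)=O(n^{-C_{tr}c_a})=o(n^{-1})\), which is negligible against the rate.
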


\begin{corollary}
\label{cor:finite-time-transfer}
The rates above can be transferred to the case of finite-time observations, provided that exponential ergodicity holds.
To observe that, let \(\V\) be a potential for which the corresponding nonlinear dynamics satisfy
\begin{equation}\label{eq:chi2-transfer-assumption}
\chi^2(P_t\Vert\Pi_0)
\leq
\eta_t
:=
C_{t_0}e^{-\rho_\chi(t-t_0)},
\qquad t\geq t_0,
\end{equation}
for some \(t_0\geq0\), \(C_{t_0}<\infty\), and \(\rho_\chi>0\), where \(P_t\) and \(\Pi_0\) denote the laws with densities \(p_t\) and \(\pi_0\), respectively. For the OU potential~\eqref{eq:ou-potential}, the inequality~\eqref{eq:chi2-transfer-assumption} holds under the assumptions of~\cite{belomestny-morozova-ou}, with
\(
C_{t_0}=\chi^2(P_{t_0}\Vert\Pi_0).
\)
Let \(X_1^{T_n},\ldots,X_n^{T_n}\) be independent observations from \(P_{T_n}\) for some \(T_n>0\), and let \(G_{M_n}\) be the event on which~\eqref{eq:full-mle-oracle-transfer} holds. Under the assumptions of Theorem~\ref{thm:oracle}, it holds that
\[
P_{T_n}^{\otimes n}(G_{M_n}^c)
\leq
\delta+n\tau_0(M_n)
+
\frac12\sqrt{(1+\eta_{T_n})^n-1}.
\]
Indeed,
\[
1+\chi^2(P_t^{\otimes n}\Vert\Pi_0^{\otimes n})
=
\left(1+\chi^2(P_t\Vert\Pi_0)\right)^{n}
\leq
(1+\eta_t)^n,
\]
and therefore
\(
\TV(P_t^{\otimes n},\Pi_0^{\otimes n})
\leq
0.5\sqrt{(1+\eta_t)^n-1}.
\)
The claimed bound follows from
\(
P_{T_n}^{\otimes n}(G_{M_n}^c)
\leq
\Pi_0^{\otimes n}(G_{M_n}^c)
+
\TV(P_{T_n}^{\otimes n},\Pi_0^{\otimes n})
\)
and
\(
\Pi_0^{\otimes n}(G_{M_n}^c)
\leq\delta+n\tau_0(M_n).
\)
Consequently, if
\(
n\eta_{T_n}\longrightarrow0
\)
and
\(
n\tau_0(M_n)\longrightarrow0,
\)
then the stationary oracle inequality and the rates in~\eqref{eq:kl-rate} remain valid for observations from \(P_{T_n}\). Under~\eqref{eq:chi2-transfer-assumption}, it is sufficient to choose, for any \(\varepsilon>0\),
\(
T_n
\geq
t_0+(1+\varepsilon)\rho_\chi^{-1}\log n,
\)
which guarantees
\(
n\eta_{T_n}
\leq
C_{t_0}n^{-\varepsilon}
\longrightarrow0
\)
as \(n\to\infty\).
Since by Lemma~\ref{lem:explicit-tails}
it holds that
\(
\tau_0(M)\lesssim e^{-c_a(M-v_\star)},
\)
taking
\(
M_n=v_\star+C_{\mathrm{tr}}\log n
\)
with \(c_aC_{\mathrm{tr}}>1\) yields the overall error \(\delta+o(1)\) and hence the desired rate.
\end{corollary}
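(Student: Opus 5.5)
The argument has three parts: a change-of-measure bound from the stationary product law to the finite-time product law, a bound on the chi-square of a product measure, and a choice of \(T_n\) and \(M_n\) that makes every error term vanish. The stationary input is Proposition~\ref{prop:full-mle-transfer} as specialised in Theorem~\ref{thm:oracle}. Under i.i.d. sampling from \(\Pi_0\), it gives \(\Pi_0^{\otimes n}(G_{M_n}^c)\leq \delta+n\tau_0(M_n)\), because the oracle inequality holds with probability at least \((1-\delta)(1-\tau_0(M_n))^n\geq 1-\delta-n\tau_0(M_n)\). The event \(G_{M_n}\) is a deterministic measurable subset of \((\R^d)^n\): it depends only on the sample through the MLE. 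So for any other law \(Q\) on \((\R^d)^n\) we have \(Q(G^c)\leq \Pi_0^{\otimes n}(G^c)+\TV(Q,\Pi_0^{\otimes n})\). I would apply this with \(Q=P_{T_n}^{\otimes n}\), using that the observations are independent with common law \(P_{T_n}\).

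To bound the total variation term, I would use tensorization of \(\chi^2\). Writing \(1+\chi^2(P\Vert\Pi)=\int (dP/d\Pi)^2\,d\Pi\), Fubini gives \(1+\chi^2(P^{\otimes n}\Vert\Pi^{\otimes n})=(1+\chi^2(P\Vert\Pi))^n\). Assumption~\eqref{eq:chi2-transfer-assumption} then bounds the right-hand side by \((1+\eta_{T_n})^n\). Next I would apply the standard inequality \(\TV\leq \tfrac12\sqrt{\chi^2}\). It follows from Cauchy--Schwarz: \(2\TV=\int|dP/d\Pi-1|\,d\Pi\leq(\int (dP/d\Pi-1)^2\,d\Pi)^{1/2}\). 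The result is \(\TV\leq \tfrac12\sqrt{(1+\eta_{T_n})^n-1}\), and adding this to the stationary bound gives the stated estimate. Absolute continuity \(P_t\ll\Pi_0\) is implicit in the finiteness of \(\chi^2\) for \(t\geq t_0\).

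For the asymptotic conclusion, I would use \((1+\eta)^n-1\leq e^{n\eta}-1\leq 2n\eta\) when \(n\eta\leq1\). So \(n\eta_{T_n}\to0\) forces the TV term to \(0\). With \(T_n\geq t_0+(1+\varepsilon)\rho_\chi^{-1}\log n\) we get \(\eta_{T_n}\leq C_{t_0}n^{-(1+\varepsilon)}\), hence \(n\eta_{T_n}\leq C_{t_0}n^{-\varepsilon}\). For the tail term, Lemma~\ref{lem:explicit-tails} gives \(\tau_0(M_n)\lesssim e^{-c_a C_{\mathrm{tr}}\log n}=n^{-c_aC_{\mathrm{tr}}}\), so \(n\tau_0(M_n)\lesssim n^{1-c_aC_{\mathrm{tr}}}\to0\) when \(c_aC_{\mathrm{tr}}>1\). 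On \(G_{M_n}\) the right-hand side of~\eqref{eq:oracle} is deterministic. With the choice of \(m\) from Theorem~\ref{thm:oracle}, the bound therefore holds with \(P_{T_n}^{\otimes n}\)-probability at least \(1-\delta-o(1)\) for every fixed \(\delta\). Taking \(\delta\) small gives the \(O_{\Prob}\) statement~\eqref{eq:kl-rate} under the finite-time law.

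The main point to check is that \(G_{M_n}\) really is determined by the data alone and that no step of the stationary proof used the sampling law beyond its probability bound. The truncated oracle in Proposition~\ref{prop:truncated-oracle} is applied conditionally on the no-escape event \(E_{M_n}\), and under \(\Pi_0\) the conditional law of the sample given \(E_{M_n}\) is i.i.d. \(\pi_0^{M_n}\). I would avoid re-deriving any of this under \(P_{T_n}\). Instead I would define \(G_{M_n}\) directly as the set of samples on which inequality~\eqref{eq:full-mle-oracle-transfer} holds and use only its \(\Pi_0^{\otimes n}\)-probability. The change of measure then requires nothing further. For the OU case, the remaining input is the cited exponential \(\chi^2\)-decay from \cite{belomestny-morozova-ou}, which gives \(T_n\asymp\log n\).
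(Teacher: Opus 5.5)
Your proposal is correct and follows the same route as the paper. Tensorize $\chi^2$ over the product, use $\TV\leq\tfrac12\sqrt{\chi^2}$, transfer the stationary bound $\Pi_0^{\otimes n}(G_{M_n}^c)\leq\delta+n\tau_0(M_n)$ to $P_{T_n}^{\otimes n}$ by the total-variation change of measure on the data-measurable event $G_{M_n}$, and then choose $T_n\geq t_0+(1+\varepsilon)\rho_\chi^{-1}\log n$ and $M_n=v_\star+C_{\mathrm{tr}}\log n$ with $c_aC_{\mathrm{tr}}>1$. The extra details you add only make explicit steps the paper leaves implicit: the bound $(1+\eta)^n-1\leq 2n\eta$ for $n\eta\leq 1$, and the remark that only the stationary probability of $G_{M_n}$ is used.
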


\subsection{Nonlinear inverse stability and coefficient recovery}
We now turn to coefficient recovery. Although the boundedness of \(w_0\) on \(I=[r_-,r_+]\) suggests an inverse inequality of the form
\(
\|h\|_{L^2(I)}^2
\leq
Cm^2\|\cA h\|_{L^2(\nu_0)}^2,
\)
such a bound cannot hold uniformly over free-knot ReQU networks: the number of units does not control their smallest spatial scale. Indeed, for \(r_0\in\operatorname{int}(I)\), consider
\[
b_\delta(r)
=
(r-r_0)_+^2
-3(r-r_0-\delta)_+^2
+3(r-r_0-2\delta)_+^2
-(r-r_0-3\delta)_+^2.
\]
This four-unit ReQU network satisfies
\(
\|b_\delta\|_{L^2(I)}^2\asymp\delta^5
\)
and
\(
\|\cA b_\delta\|_{L^2(\nu_0)}^2\lesssim\delta^6,
\)
and hence the ratio
\(
\|b_\delta\|_{L^2(I)}^2/
\|\cA b_\delta\|_{L^2(\nu_0)}^2
\gtrsim\delta^{-1}
\)
diverges as \(\delta\to 0\).
Thus Proposition~\ref{prop:exact-forward-approx} cannot be inverted directly to control
\(
\|\widehat{\xi}_n-\Xi_0\|_{L^2(I)}.
\)
We instead derive coefficient recovery through control of \(s_\xi'\), using the interpolation result given in Lemma~\ref{lem:holder-besov-interpolation} of Supplementary Material alongside the following regular interior condition necessary to control the coefficient difference in terms of the KL-divergence.

\begin{assumption}\label{ass:regular-interior}
Fix a reference coefficient \(\xi_\circ\) satisfying Assumption~\ref{ass:coefficient}, and let \(I\subset (0,a_{\xi_\circ})\). We assume that there exists a compact interval \(S_I\subset (v_\star,\infty)\) disjoint from critical values of \(V\) such that \(s_{\xi_\circ}(I)\subseteq S_I\).
\end{assumption}
\begin{remark}\label{rem:coarea-bounds}
The endpoint bound~\eqref{eq:nu-zero-upper} controls the
large-energy regime \(s_0(r)\to\infty\) as \(r\downarrow0\). By
contrast, Assumption~\ref{ass:regular-interior} concerns the compact interior energy interval \(S_I\). Since \(S_I\) is disjoint from the critical values
of \(V\), Assumption~\ref{ass:potential} implies that there exist constants
\(0<c_{V,I}\leq C_{V,I}<\infty\) such that
\(
c_{V,I}\leq m_V(s)\leq C_{V,I}
\)
for all \(s\in S_I\).
\end{remark}
\begin{remark}
If Assumption~\ref{ass:regular-interior} is satisfied with \(\xi_\circ=\Xi_0\), the condition holds for the true energy interval \(s_0(I)\). In case when \(V\) has no critical values in \((v_\star,\infty)\), this assumption is automatically satisfied; this is the case, e.g., for the Ornstein-Uhlenbeck potential~\eqref{eq:ou-potential}.
\end{remark}

Provided the condition above is satisfied for the true coefficient \(\Xi_0\), one can establish the bound of the form 
\(
\norm{\xi-\Xi_0}_{L^2(I)} \lesssim
\KL(\pi_0\Vert\pi_\xi)^{\beta/(2(\beta+1))}
\)
for any candidate \(\xi\); see Lemmas~\ref{lem:exact-kl-modulus} and~\ref{lem:mode-localization} of the Supplementary Material. Given~\eqref{eq:oracle}, this allows to obtain the following unconditional bound for the coefficient difference.
\begin{theorem}
\label{thm:xi-upper}
Suppose that the assumptions of Theorem~\ref{thm:oracle} hold. Fix \(I=[r_-,r_+]\Subset(0,a_0)\) and suppose that Assumption~\ref{ass:regular-interior} holds on \(I\) for \(\xi_\circ=\Xi_0\). Let \(\mathfrak R_{n,m,\delta,M_n}\) denote the right-hand side of the KL-bound in~\eqref{eq:oracle}. There exists \(C_\mathrm{mode}=C(I,a_0,d,V,K)>0\) such that the inequality 
\(
\mathfrak R_{n,m,\delta,M_n}
\leq C_\mathrm{mode}
\) implies
\begin{equation}
 \Vert \widehat\Xi_n-\Xi_0\Vert_{L^2(I)} \leq C_\mathrm{inv} \mathfrak R_{n,m,\delta,M_n}^{\beta/(2(\beta+1))}
\end{equation}
for some constant 
\(
C_\mathrm{inv}:=C(I,a_0,V,\kappa,K,\beta,H,H_1)
\)
with probability at least \((1-\delta)(1-\tau_0(M_n))^n\).
Optimizing in $m$ gives
\begin{equation}\label{eq:xi-upper-rate}
 \Vert\widehat\Xi_n-\Xi_0\Vert_{L^2(I)} = O_{\Prob}\left( (b_n\log n/n)^{\beta/(2\beta+3)} \right).
\end{equation}
\end{theorem}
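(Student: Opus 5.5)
The plan is to combine the KL oracle bound of Theorem~\ref{thm:oracle} with a deterministic inverse-stability modulus, applied pointwise on the event $G_{M_n}$ on which \eqref{eq:oracle} holds.

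\emph{Step 1: reduce the KL bound to a sup-norm bound on the inverse profiles.} Fix $\xi\in\cX_m$ with $\KL(\pi_0\Vert\pi_\xi)\le \mathfrak R\le C_{\mathrm{mode}}$. Pinsker's inequality bounds $\|f_\xi-f_0\|_{L^1}$ by $\sqrt{2\mathfrak R}$, where $f_\xi=q_\xi m_V$ is the energy density. Lemma~\ref{lem:mode-localization} should then give two consequences once $\mathfrak R$ is small. First, the mode $a_\xi$ is close to $a_0$, so $I\Subset(0,a_\xi)$. Second, $s_\xi(I)$ stays in a slightly enlarged compact interval $S_I'$ still avoiding critical values. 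This is where Assumption~\ref{ass:regular-interior} with $\xi_\circ=\Xi_0$ is used, and it fixes $C_{\mathrm{mode}}$.

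On $S_I'$, Remark~\ref{rem:coarea-bounds} gives $m_V\asymp 1$. Since $q$ is monotone with $|q'|=q\xi(q)\in[\kappa r_-/2,\,2Ka_0]$ there, the $L^1$ (and, via the KL, the weighted $L^2$) closeness of $q_\xi$ and $q_0$ on $S_I'$ transfers to the inverse profiles. I would apply Lemma~\ref{lem:exact-kl-modulus} to get
\[
\|s_\xi-s_0\|_{L^2(I')}\lesssim \mathfrak R^{1/2}
\]
on a slightly larger interval $I'\supset I$. Using the exact identity $\log(\pi_0/\pi_\xi)$ in terms of $G=g_\xi-g_0$ and $\mu_\xi-\mu_0$ (as in Proposition~\ref{prop:exact-forward-approx}), the KL is comparable to $\|G-c\|_{L^2(\nu_0)}^2$ for the best constant $c$. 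This is the key point: one should obtain a rate $\mathfrak R^{1/2}$, not the $\mathfrak R^{1/4}$ that Pinsker alone would give.

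\emph{Step 2: interpolation.} Since $\xi(r)=-(rs_\xi'(r))^{-1}$ and $\xi\mapsto s_\xi'=-\rho_\xi$ is Lipschitz with Lipschitz inverse on $[\kappa,K]$ for $r\in I$, we have
\[
\|\xi-\Xi_0\|_{L^2(I)}\lesssim\|\rho_\xi-\rho_0\|_{L^2(I)}=\|(s_\xi-s_0)'\|_{L^2(I)}.
\]
Membership of both coefficients in $\mathcal H^\beta([0,U],\max(H,H_1))$, together with $\kappa\le\xi\le K$, puts $u:=s_\xi-s_0$ in a H\"older ball of order $\beta+1$ on $I'$, with radius depending on $I,\kappa,K,H,H_1$. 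Lemma~\ref{lem:holder-besov-interpolation} then gives
\[
\|u'\|_{L^2(I)}\lesssim\|u\|_{L^2(I')}^{\beta/(\beta+1)}\,\|u\|_{\mathcal H^{\beta+1}}^{1/(\beta+1)}.
\]
Inserting Step 1 yields $\mathfrak R^{\beta/(2(\beta+1))}$, with $C_{\mathrm{inv}}$ depending on the listed quantities. The additive constant ambiguity in $s$ must be removed, and this is handled by the mode and normalization matching from Step 1. Applying this to $\xi=\widehat\Xi_n$ on the event of Theorem~\ref{thm:oracle} gives the stated probability.

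\emph{Step 3: rate.} Take $m\asymp(n/(b_n\log n))^{1/(2\beta+3)}$ and $\delta$ fixed, with $M_n$ as in Theorem~\ref{thm:oracle} so that $d_m(M_n)$, $\bar R_m(M_n)$ and $n\tau_0(M_n)$ are negligible. Then $\mathfrak R\asymp(b_n\log n/n)^{2(\beta+1)/(2\beta+3)}\to0$, so eventually $\mathfrak R\le C_{\mathrm{mode}}$. Raising to the power $\beta/(2(\beta+1))$ gives \eqref{eq:xi-upper-rate} in probability.

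\emph{Main obstacle.} The hardest part is Step 1: getting the $L^2$ bound on $s_\xi-s_0$ at order $\mathfrak R^{1/2}$ uniformly over candidates, which requires mode localization so that the energy windows are comparable. I would first try a quadratic lower bound $\KL\gtrsim\min(1,\|\log\pi_0-\log\pi_\xi-c\|^2_{L^2(\pi_0;\,s_0^{-1}(S_I'))})$. This should hold because the log-ratio is uniformly bounded on the compact energy window; the bound then converts into the $s$-coordinates via the mean value theorem.
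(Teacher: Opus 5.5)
Your proposal is correct and follows essentially the paper's proof: for $\mathfrak R\le C_{\mathrm{mode}}$, Lemma~\ref{lem:mode-localization} gives the mode margin $a_\xi\ge r_{+}+\eta_I$. Lemma~\ref{lem:exact-kl-modulus} then does the rest; its proof contains exactly your bound $\|s_\xi-s_0\|_{L^2(I)}^2\lesssim\KL(\pi_0\Vert\pi_\xi)$, followed by $\xi-\Xi_0=r\xi\Xi_0(s_\xi'-s_0')$ and Lemma~\ref{lem:holder-besov-interpolation}, which yields $\mathfrak R^{\beta/(2(\beta+1))}$, and Theorem~\ref{thm:oracle} with $m\asymp(n/(b_n\log n))^{1/(2\beta+3)}$ finishes. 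The order-$\KL^{1/2}$ step you flag as the main obstacle is handled in the paper by the pointwise inequality $u\log(u/v)-u+v\ge(u-v)^2/(2a_+)$ on $s_0(I)\subseteq S_I$, so you need neither $s_\xi(I)$ to avoid critical values nor a best-constant reduction, and $s_\xi=q_\xi^{-1}$ has no additive ambiguity; beware that a log-ratio bound only modulo a free constant $c$ would not suffice, since $q_\xi=e^{-c}q_0$ on a window corresponds to $\xi(r)=\Xi_0(e^{c}r)$.
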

The proof is given in Appendix~\ref{proof:thm:xi-upper}.

\section{Minimax lower bounds for estimating \texorpdfstring{\(\Xi\)}{the drift coefficient}}\label{sec:lower}

We now show that the coefficient rate obtained in Section~\ref{sec:upper-exact} is minimax optimal, up to logarithmic factors. We work with a local parameter class around a fixed interior coefficient
\(\xi_\star\), and construct finitely many alternatives that are well
separated in \(L^2(I)\) but induce statistically close stationary
experiments. 
Lemma~\ref{lem:primitive-kl} first converts the KL divergence
between neighboring models into an \(L^2\)-bound for their primitive
difference. An Assouad cube of localized zero-mean perturbations then
yields the lower rate
\(
n^{-2\beta/(2\beta+3)}
\)
for the squared \(L^2(I)\)-risk.

\subsection{Local alternatives and KL geometry}

Fix a baseline coefficient $\xi_\star$ satisfying
\begin{equation}\label{eq:baseline-margin}
  \kappa+\epsilon_0\leq\xi_\star(r)\leq K-\epsilon_0,
  \qquad r\in[0,U],
\end{equation}
for some \(\epsilon_0>0\), with 
\(\xi_\star\in\mathcal H^\beta([0,U],H/2)\). 
Let \(a_\star\) be the maximal invariant density generated by \(\xi_\star\), and fix compact intervals
\(
  I=[r_-,r_+]\Subset J\Subset(0,a_\star).
\)
Finally, let 
\begin{multline}\label{eq:coeff_class_LB}
\mathfrak X_\beta(J)
:=
\left\{
\xi\in\mathcal H^\beta([0,U],H)\colon
\kappa\leq\xi(r)\leq K \, \forall\,r\in[0,U],
\right.\\\left. 
\xi(r)=\xi_\star(r)\ \text{for }r\in[0,U]\setminus J
\right\}
\end{multline}
the considered coefficient class. The following result holds.

\begin{lemma}
\label{lem:primitive-kl}
Let \(\xi_0,\xi_1\in \mathfrak X_\beta(J)\), and define the interpolations
\[
\rho_t:=\rho_{\xi_0}+t(\rho_{\xi_1}-\rho_{\xi_0})
\quad \text{and}\quad
\xi_t(r):=(r\rho_t(r))^{-1}, 
\quad 
0\leq t\leq 1.
\]
Suppose that Assumption~\ref{ass:regular-interior} holds on \(J\Subset(0,a_{\xi_\star})\) for \(\xi_\circ=\xi_\star\) and uniformly along the path \((\xi_t)_{t\in[0,1]}\), i.e., 
\(s_{\xi_t}(J)\subseteq S_J\) for all \(t\in[0,1]\).
Then, if 
\[
G(r):=\int_1^r (\rho_1(u)-\rho_0(u))\,du 
=0 
\quad\forall\, r\in[0,U]\setminus J,
\]
and
\[
\Vert G\Vert_\infty 
\leq \log 2/(K(1+C_{V,J}|J|\kappa^{-1}))
\]
with \(C_{V,J}\) the upper bound for \(m_V\) on \(S_J\) given by Remark~\ref{rem:coarea-bounds}, it holds that 
\begin{equation}\label{eq:primitive-kl-bound}
  \KL(\Pi_{\xi_1}\Vert \Pi_{\xi_0})
  \leq C_{J,\kappa,K,V}\norm{G}_{L^2(J)}^2,
\end{equation}
where \(C_{J,\kappa,K,V}\) is a uniform constant independent of the particular pair \((\xi_0,\xi_1)\) or \(G\).
\end{lemma}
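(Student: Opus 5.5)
Our approach is to bound the KL divergence directly in the energy coordinate rather than by a perturbative expansion. By the dimension reduction of Subsection~\ref{subsec:mle}, \(\KL(\Pi_{\xi_1}\Vert\Pi_{\xi_0})=\int q_1(s)\log(q_1(s)/q_0(s))\,m_V(s)\,\dd s\), where \(q_i=q_{\xi_i}\). By \eqref{eq:pi-general}, \(\log q_i(s)\) is determined through \(g_{\xi_i}^{-1}(\mu_i-s)\). Since \(\rho_{\xi_1}=\rho_{\xi_0}\) outside \(J\) and \(G\) vanishes off \(J\), we have \(g_{\xi_1}=g_{\xi_0}+G\) everywhere. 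In particular, \(g_{\xi_1}(r)=g_{\xi_0}(r)\) for \(r\) below \(J\) and for \(r\) above \(J\).

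\textbf{Step 1: control of the normalisation shift.} Set \(\Delta\mu:=\mu_1-\mu_0\). The inverse profiles satisfy \(s_{\xi_1}(r)-s_{\xi_0}(r)=\Delta\mu-G(r)\). Writing the normalisation \eqref{qmv_norm} in the \(r\)-coordinate gives \(\int_0^{a_i} r\,m_V(s_i(r))\rho_i(r)\,\dd r=1\). Only the \(r\)-range \(J\), where \(G\neq0\), and the global shift \(\Delta\mu\) change the integral.

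The key claim is \(|\Delta\mu|\lesssim\norm{G}_{L^1(J)}\). To prove it, we view \(\mu\) as the unique root of the map \(\Phi(\mu,\xi)=\int_{v_\star}^\infty g_\xi^{-1}(\mu-s)m_V(s)\,\dd s-1\). This map is strictly increasing in \(\mu\), with derivative \(\int q\xi(q)m_V\,\dd s\ge\kappa\). Perturbing \(g\) by \(G\) supported in \(J\) changes \(\Phi\) by at most \(C\int_{s\in s(J)}|G|\,m_V\,\dd s\le C\,C_{V,J}|J|\kappa^{-1}\norm{G}_\infty\)-type quantities. Here we use Assumption~\ref{ass:regular-interior} along the path and Remark~\ref{rem:coarea-bounds}, which give \(m_V\le C_{V,J}\) on \(S_J\). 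We then refine this to an \(L^1(J)\) bound via the change of variables \(\dd s=\rho\,\dd r\). The smallness condition on \(\norm{G}_\infty\) ensures \(K(|\Delta\mu|+\norm{G}_\infty)\le\log2\). Consequently the ratio \(q_1/q_0\) lies in \([1/2,2]\), so all linearisations hold with uniform constants.

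\textbf{Step 2: pointwise log-ratio.} Using \(q_i=g_{\xi_i}^{-1}(\mu_i-s)\) and \(|(g_\xi^{-1})'|/g_\xi^{-1}=\xi\le K\), we compare \(\log q_1(s)\) and \(\log q_0(s)\). Outside the energy band \(s_0(J)\cup s_1(J)\subseteq S_J\), the maps \(g_{\xi_1}\) and \(g_{\xi_0}\) coincide on the relevant ranges, so \(|\log q_1-\log q_0|\le K|\Delta\mu|\). Inside the band, \(|\log q_1-\log q_0|\le K(|\Delta\mu|+|G(q(s))|)\).

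\textbf{Step 3: \(\chi^2\) bound.} We use \(\KL(P\Vert Q)\le\chi^2(P\Vert Q)\), or equivalently \(\KL\le C\int q_0(\log(q_1/q_0))^2 m_V\,\dd s\) when the log-ratio is bounded by \(\log2\). This gives
\[
\KL\lesssim|\Delta\mu|^2+\int_{S_J}G(q_0(s))^2\,q_0(s)m_V(s)\,\dd s\lesssim\norm{G}_{L^1(J)}^2+C_{V,J}\norm{G}_{L^2(J)}^2,
\]
after changing variables \(r=q_0(s)\), whose Jacobian \(\rho_0\) is bounded on \(J\). Cauchy--Schwarz, \(\norm{G}_{L^1(J)}\le|J|^{1/2}\norm{G}_{L^2(J)}\), then yields \eqref{eq:primitive-kl-bound}.

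\textbf{Main obstacle.} The hardest point is the crude \(L^\infty\) bound on \(\Delta\mu\) in Step 1, which turns into a squared term \(\norm{G}_\infty^2\) rather than an \(L^2\) term. Precisely, the difficulty is showing that \(\Delta\mu\) is controlled linearly by an integral of \(G\), uniformly along the path. The path \(\xi_t\) is introduced exactly for this purpose. We plan to handle it by differentiating in \(t\): \(\dot\mu_t\) is given by a formula analogous to \eqref{eq:c0-functional}, namely a weighted average of \(\dot g_t=G\) over \(\nu_t\). The band hypothesis \(s_{\xi_t}(J)\subseteq S_J\) together with the coarea bounds then gives \(|\dot\mu_t|\lesssim\norm{G}_{L^1(J)}\) uniformly in \(t\), and integrating over \(t\in[0,1]\) completes Step 1.
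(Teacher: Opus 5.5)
Your proposal is correct. Step~1, as you resolve it in your last paragraph, is exactly the paper's bound $|\dot\mu_t|\le C_{V,J}\kappa^{-1}\norm{G}_{L^1(J)}$. It yields precisely the constant $C_{V,J}|J|\kappa^{-1}$ that turns the stated smallness of $\norm{G}_\infty$ into $K(|\Delta\mu|+\norm{G}_\infty)\le\log 2$. After that you take a different route from the paper.

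The paper keeps the interpolation path for the whole argument. It bounds the path score $\dot\ell_t=\xi_t(q_t(V))\bigl(\dot\mu_t-G(q_t(V))\bigr)$ uniformly in $t$: in $L^2(\Pi_{\xi_t})$ by a multiple of $\norm{G}_{L^2(J)}$, and in sup-norm by $K(1+C_{V,J}|J|\kappa^{-1})\norm{G}_\infty$. It then integrates $\partial_t\pi_t=\pi_t\dot\ell_t$ to control $\chi^2(\Pi_{\xi_1}\Vert\Pi_{\xi_0})^{1/2}$.

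You instead compare the two endpoints directly, which is more elementary. No uniform-in-$t$ score estimates are needed, and the main $L^2$ term uses the band condition only at $t=0$; along the path it is needed only for $\dot\mu_t$.

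The cleanest way to write your Step~2 is the exact identity $g_{\xi_1}(q_1(s))-g_{\xi_1}(q_0(s))=\Delta\mu-G(q_0(s))$, combined with $g_{\xi_1}'(r)=(r\xi_1(r))^{-1}\ge (Kr)^{-1}$. This gives $|\log(q_1(s)/q_0(s))|\le K|\Delta\mu-G(q_0(s))|$ for every $s$, with no inside/outside-band case split. Evaluating $G$ at $q_0(s)$, rather than at an unspecified $q(s)$, is what makes the substitution $r=q_0(s)$ against the weight $q_0m_V$ in Step~3 work. It produces $\int_J G(r)^2m_V(s_0(r))\xi_0(r)^{-1}\,\dd r\le C_{V,J}\kappa^{-1}\norm{G}_{L^2(J)}^2$, after which $\chi^2\le C\int\log^2(q_1/q_0)\,q_0m_V\,\dd s$ and Cauchy--Schwarz finish the proof.

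The paper's route, in turn, buys uniformity of method: it is the same score-integration template used in the proof of Proposition~\ref{prop:exact-forward-approx}.
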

The proof is given in Appendix~\ref{proof:lem:primitive-kl}.

\subsection{Assouad construction and minimax lower bound}

We now construct the family of local alternatives used in Assouad's
lemma. Let \(b>0\) be a bandwidth and choose
\(
M\asymp b^{-1}
\)
pairwise disjoint intervals of length comparable to \(b\) contained in
\(I\). On each interval, place a rescaled smooth bump \(\psi_j\) with
zero integral, and define
\(
\rho_\theta
=
\rho_\star
+
\alpha_b\sum_{j=1}^M\theta_j\psi_j
\)
with 
\(
\theta\in\{0,1\}^M,
\)
where \(\rho_\star\) corresponds to \(\xi_\star\), and the amplitude \(\alpha_b\asymp b^\beta\) is chosen so that the
corresponding coefficients \(\xi_\theta\) remain in \(\mathfrak X_\beta(I)\). The
zero-mean condition localizes the associated primitive perturbations,
while disjoint supports make the \(L^2(I)\)-separation additive over the
coordinates of the cube. For neighboring vertices
\(\theta\) and \(\theta^{(j)}\), the coefficient separation and the
one-observation KL divergence have orders
\[
\|\xi_\theta-\xi_{\theta^{(j)}}\|_{L^2(I)}^2
\asymp b^{2\beta+1},
\qquad
\operatorname{KL}
\bigl(\Pi_{\xi_\theta}\|\Pi_{\xi_{\theta^{(j)}}}\bigr)
\lesssim b^{2\beta+3}.
\]
Thus, taking \(b\asymp n^{-1/(2\beta+3)}\) keeps the KL divergence between every pair of neighboring \(n\)-sample
experiments bounded by a constant and leads, through Assouad's
lemma, to the minimax lower bound stated below.

\begin{theorem}
\label{thm:minimax-lower}
Under the same assumptions as in Lemma~\ref{lem:primitive-kl}, there exists a constant \(C_\mathrm{risk}\) depending on \(I,J,\beta,H,\kappa,K,\xi_\star,d\) and \(V\), but not on \(n\), such that
\begin{equation}\label{eq:lower-risk}
  \inf_{\wh\Xi_n}
  \sup_{\xi\in\mathfrak X_\beta(J)}
  \E_\xi\norm{\wh\Xi_n-\xi}_{L^2(I)}^2
  \geq C_\mathrm{risk} n^{-2\beta/(2\beta+3)}.
\end{equation}
Moreover, for constants 
\(C_\mathrm{sep}>0\) and \(p\in(0,1)\)
depending on the same variables as \(C_\mathrm{risk}\),
\begin{equation}\label{eq:lower-prob}
  \inf_{\wh\Xi_n}
  \sup_{\xi\in\mathfrak X_\beta(J)}
  \Pi_\xi^{\otimes n}\!\biggl(
    \norm{\wh\Xi_n-\xi}_{L^2(I)}
    \geq C_\mathrm{sep} n^{-\beta/(2\beta+3)}
  \biggr)
  \geq p.
\end{equation}
The infimum is over all measurable estimators based on \(X_1,\ldots,X_n\sim \Pi_\xi\).
\end{theorem}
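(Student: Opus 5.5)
The proof will follow the Assouad construction sketched just before the statement, with Lemma~\ref{lem:primitive-kl} supplying the KL control. Fix a smooth $\psi\in C_c^\infty((0,1))$ with $\int\psi=0$ and $\psi\not\equiv0$. For a bandwidth $b$, take $M\asymp b^{-1}$ disjoint intervals $[t_j,t_j+b]\subset I$ and set $\psi_j(r)=\psi((r-t_j)/b)$. Then define
\[
\rho_\theta=\rho_\star+\alpha_b\sum_{j=1}^M\theta_j\psi_j,\qquad \alpha_b=c_0b^\beta,\qquad \xi_\theta=(r\rho_\theta)^{-1},\qquad \theta\in\{0,1\}^M .
\]

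The first step is to check that $\xi_\theta\in\mathfrak X_\beta(J)$.
\begin{itemize}
\item Outside $I$ we have $\rho_\theta=\rho_\star$, so $\xi_\theta=\xi_\star$ there.
\item Since $\alpha_b\to0$ and $\rho_\star$ is bounded away from $0$ on $I$, the margin \eqref{eq:baseline-margin} gives $\kappa\le\xi_\theta\le K$ for small $b$.
\item The map $\rho\mapsto (r\rho)^{-1}$ is smooth on the relevant range. The perturbation $\alpha_b\sum\theta_j\psi_j$ has $\mathcal H^\beta$ seminorm $\lesssim c_0$, because the supports are disjoint and $b^\beta\cdot b^{-\beta}=1$. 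Hence $\|\xi_\theta\|_{\mathcal H^\beta}\le H/2+Cc_0\le H$ once $c_0$ is small.
\end{itemize}

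The second step is the separation bound. On $\operatorname{supp}\psi_j$,
\[
\xi_\theta-\xi_{\theta^{(j)}}=-\frac{\alpha_b\psi_j}{r\rho_\theta\rho_{\theta^{(j)}}}+O(\alpha_b^2),
\]
and the denominator is bounded on $I$. This gives $\|\xi_\theta-\xi_{\theta'}\|_{L^2(I)}^2\gtrsim \alpha_b^2 b\,\rho_H(\theta,\theta')$, where $\rho_H$ is the Hamming distance, so each coordinate contributes $\asymp b^{2\beta+1}$.

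The third step is the KL bound, obtained by applying Lemma~\ref{lem:primitive-kl} to neighbouring pairs $\xi_0=\xi_{\theta^{(j)}}$, $\xi_1=\xi_\theta$. Their primitive difference is
\[
G(r)=\pm\alpha_b\int_1^r\psi_j(u)\,du .
\]
By the zero-mean property this vanishes off $[t_j,t_j+b]\subset J$. It satisfies $\|G\|_\infty\le\alpha_b b\|\psi\|_{L^1}\to0$, so the smallness hypothesis holds for small $b$, and $\|G\|_{L^2(J)}^2\lesssim\alpha_b^2b^3=b^{2\beta+3}$.

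The part I expect to be the main obstacle is verifying the uniform path condition $s_{\xi_t}(J)\subseteq S_J$ in Lemma~\ref{lem:primitive-kl}. I would handle it by perturbation. Each $\rho_t$ differs from $\rho_\star$ by $O(\alpha_b)$ in sup norm, with primitive difference $O(\alpha_b b)$ everywhere. By the stability of $\mu_\xi$ and $g_\xi$ (Lemma~\ref{lem:ode-stability}), $s_{\xi_t}$ is then uniformly $O(\alpha_b b)$-close to $s_{\xi_\star}$ on $J$. So it stays inside a slightly enlarged compact $S_J$ that still avoids critical values, which is possible since $s_{\xi_\star}(J)$ is compact and the critical values form a closed set disjoint from it. The lemma then yields $\KL(\Pi_{\xi_\theta}\Vert\Pi_{\xi_{\theta^{(j)}}})\le C b^{2\beta+3}$, and tensorization gives $n$-sample KL $\le Cnb^{2\beta+3}$.

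Finally, choose $b=c_1n^{-1/(2\beta+3)}$ with $c_1$ small, so that the $n$-sample KL is at most $1/2$. By Pinsker, the total variation between neighbouring product experiments is then bounded by $1/2$. Assouad's lemma \citep{tsybakov2009}, in its $L^2$ form with separation $s^2\asymp b^{2\beta+1}$ per coordinate, gives
\[
\inf_{\wh\Xi_n}\sup_\theta\E\|\wh\Xi_n-\xi_\theta\|_{L^2(I)}^2\gtrsim M\,b^{2\beta+1}(1-1/2)\asymp b^{2\beta}\asymp n^{-2\beta/(2\beta+3)},
\]
which is \eqref{eq:lower-risk}. For \eqref{eq:lower-prob}, I would use the Varshamov--Gilbert bound to extract a subfamily $\Theta'$ with pairwise Hamming distance $\ge M/8$ and $\log|\Theta'|\gtrsim M$. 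Pairwise distances are then $\gtrsim b^\beta$. The KL divergence to the centre $\theta=0$ is $\lesssim nMb^{2\beta+3}\asymp M c_1^{2\beta+3}$, which is at most a small multiple of $\log|\Theta'|$ once $c_1$ is small. Fano's lemma (Tsybakov, Thm.~2.5) then gives the probability bound with some $p\in(0,1)$ and $C_{\mathrm{sep}}\asymp c_1^\beta$.
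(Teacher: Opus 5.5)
Your construction and main chain of estimates match the paper's proof: zero-mean bumps of amplitude $\asymp b^\beta$ in the $\rho$-coordinate on $M\asymp b^{-1}$ cells of $I$, separation from $\xi_\theta-\xi_{\theta^{(j)}}=(\rho_{\theta^{(j)}}-\rho_\theta)/(r\rho_\theta\rho_{\theta^{(j)}})$, Lemma~\ref{lem:primitive-kl} with $\|G\|_{L^2(J)}^2\lesssim b^{2\beta+3}$, and $b\asymp n^{-1/(2\beta+3)}$ in Assouad's lemma. That identity is exact, so your $O(\alpha_b^2)$ remainder is actually zero.

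\textbf{The gap.} Your claim that $G=\pm\alpha_b\int_1^r\psi_j$ vanishes off $[t_j,t_j+b]$ needs the anchor $1$ of the primitive to lie outside $(t_j,t_j+b)$. Nothing in the assumptions rules out $1\in I$. The mode $a_\star$ may exceed $1$, which is why the paper takes $U\geq\max\{1,a_+\}$. If $1\in(t_j,t_j+b)$, then on both sides of that cell $G\equiv\mp\alpha_b\int_{t_j}^1\psi_j$, which is in general nonzero. So the hypothesis $G=0$ on $[0,U]\setminus J$ of Lemma~\ref{lem:primitive-kl} fails for that coordinate, and for every Fano vertex that activates it. The paper discards the cell containing $1$; this keeps $M\asymp b^{-1}$ and lets all your later steps go through. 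Alternatively you could re-anchor, since $\pi_\xi$ is unchanged when a constant is added to $g_\xi$ and absorbed into $\mu_\xi$, but the lemma would then have to be restated.

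\textbf{Where your route differs.}
\begin{itemize}
\item[(i)] \emph{The path condition.} The paper takes $s_{\xi_t}(J)\subseteq S_J$ as part of the hypotheses inherited from Lemma~\ref{lem:primitive-kl}; you derive it from the baseline condition, which is a genuine strengthening and works. The natural tool is not Lemma~\ref{lem:ode-stability}: it is stated for sieve elements and gives only $O(\alpha_b)$ control of log-densities, though that would still suffice. Better is the comparison $|\mu_t-\mu_\star|\le\|g_t-g_\star\|_\infty$ from the proof of Proposition~\ref{prop:exact-forward-approx}, which gives $|s_{\xi_t}-s_{\xi_\star}|\le 2\|g_t-g_\star\|_\infty=O(\alpha_b b)$. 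The same bound also gives $J\subset(0,a_{\xi_t})$, which you need so that $s_{\xi_t}$ is defined on $J$.
\item[(ii)] \emph{The probability bound \eqref{eq:lower-prob}.} The paper stays within Assouad. If neighbouring product laws are within total variation $1/2$, then for $\widehat\theta$ the cellwise nearest vertex to $\widehat\Xi_n$, some vertex satisfies $\E_\theta\rho_H(\widehat\theta,\theta)\ge M/4$. Since $\rho_H\le M$, this forces $\rho_H\ge M/8$ with probability at least $1/8$, and cellwise separation turns that into $\|\widehat\Xi_n-\xi_\theta\|_{L^2(I)}\gtrsim b^\beta$. Your Varshamov--Gilbert/Fano argument is also correct. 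Disjoint zero-mean bumps keep $\|G\|_\infty\lesssim\alpha_b b$ and make $\|G\|_{L^2}^2$ additive, so Lemma~\ref{lem:primitive-kl} gives $\KL\lesssim Mb^{2\beta+3}$ between the centre and any vertex. The cost is that you need KL control for non-neighbouring pairs, whereas the Assouad route uses neighbours only.
\end{itemize}
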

The proof is deferred to Appendix~\ref{proof:thm:minimax-lower}.



\begin{remark}
Theorem~\ref{thm:minimax-lower} concerns the stationary experiment. 
The transfer to finite-time observations is analogous to the product-law argument of Corollary~\ref{cor:finite-time-transfer}, but must hold uniformly over the Assouad cube
\(
\mathcal C_n:=\{\xi_\theta:\theta\in\{0,1\}^{M_n}\}.
\)
Writing \(P_{\xi,T}\) and \(\Pi_\xi\) for the finite-time and stationary laws, respectively, it is sufficient that
\(
\Delta_{n,T}
:=
\sup_{\xi\in\mathcal C_n}
\TV\!\left(P_{\xi,T}^{\otimes n},\Pi_\xi^{\otimes n}\right)
\to0.
\)
Indeed, for neighboring vertices \(\theta,\theta^{(j)}\),
\(
\TV\!\left(
P_{\xi_\theta,T}^{\otimes n},
P_{\xi_{\theta^{(j)}},T}^{\otimes n}
\right)
\leq
\TV\!\left(
\Pi_{\xi_\theta}^{\otimes n},
\Pi_{\xi_{\theta^{(j)}}}^{\otimes n}
\right)
+2\Delta_{n,T},
\)
so the testing separation used in Assouad's lemma is preserved. As in Corollary~\ref{cor:finite-time-transfer}, a uniform bound
\[
\sup_{\xi\in\mathcal C_n}
\chi^2(P_{\xi,T}\|\Pi_\xi)
\leq Ce^{-\rho T}
\]
therefore permits \(T_n\gtrsim \rho^{-1}\log n\). Alternatively, one may prove the required neighboring finite-time KL bound directly.
\end{remark}

\section{Simulation study}
\label{sec:sim-study}

For the simulation study, we consider the OU potential~\eqref{eq:ou-potential} alongside the following three drift coefficients:
\begin{equation}
\Xi_{0,1}(r)=1,
\quad
\Xi_{0,2}(r)=0.8+0.4(
0.5-0.1(1-r)_+^2
),
\quad
\text{and}
\quad
\Xi_{0,3}(r)=0.8+0.4r/(1+r).
\end{equation}
The first function presents a prime example of the Ornstein-Uhlenbeck dynamics; the choice of constants for the remaining two ensures that the theoretical assumptions of~\cite{belomestny-morozova-ou} are satisfied, assuming for simplicity that the initial distribution is given by a Gaussian law with covariance matrix \(\sigma^2 I\), \(\sigma^2<0.5\), and taking \(\kappa=0.8\), \(K=1.2\) for \(\Xi_{0,1}\), \(\Xi_{0,2}\), and \(\kappa=0.7\), \(K=1.3\) for \(\Xi_{0,3}\). 
The sieve is defined as in~\eqref{eq:smooth-dnn-sieve} with 
\(\tau=0.05\)
and
\(U=\max\{1,(K/\pi)^{d/2}\}=1\).
For \(q\geq1\), let \(0<t_1<\cdots<t_q\leq U\) be fixed knots and parameterize the network by
\[
N_{\theta,q}(r)
=
a_1+a_2r+
\sum_{j=1}^{q}
a_{j+2}
\left\{
t_j^2-(t_j-r)_+^2
\right\},
\]
subject to \(a_1>\tau\), 
\(a_2,a_3,\ldots,a_{q+2}\in[0,1]\)
and
\(
a_1+a_2U+
\sum_{j=1}^{q}a_{j+2}t_j^2
<
1-\tau,
\)
which guarantees that
\(
\tau<N_{\theta,q}(0)
\leq
N_{\theta,q}(r)
\leq
N_{\theta,q}(U)
<
1-\tau,
\)
so \(N_{\theta,q}\) remains in the identity region of \(S_\tau\) on \([0,U]\). The linear term requires two ReQU units and each knot contributes one quadratic-hinge unit; hence the family has an explicit realization in
\(
\operatorname{NN}
\left(
2,(1,q+2,1),3q+7,1
\right).
\)
We set \(t_q=U\), so that \(q=1\) gives
\(
N_{\theta,1}(r)
=
a_1+a_2r+a_3
\left(
U^2-(U-r)_+^2
\right).
\)
This smallest family already contains \(\Xi_{0,1}\) and \(\Xi_{0,2}\), whereas additional knots for \(q>1\) provide greater flexibility for approximating the non-polynomial coefficient \(\Xi_{0,3}\) that does not belong to the sieve.

\begin{figure}[t]
\centering
\includegraphics[width=\linewidth]{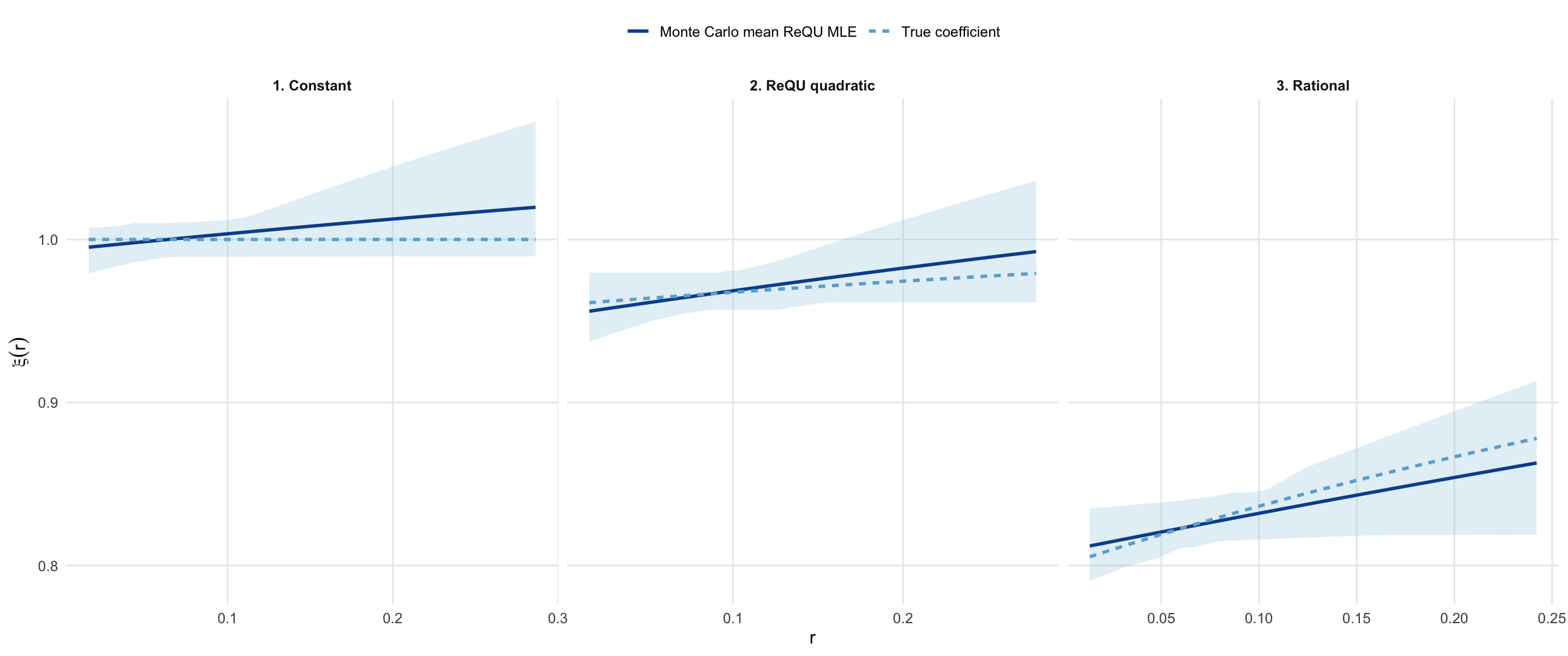}
\caption{True interaction coefficients (dashed), their ReQU-sieve
maximum-likelihood estimates (solid) and the numerical confidence intervals (blue) over 20 simulation runs for \(n=10000\) and \(d=2\).}
\label{fig:xi-estimates}
\end{figure}

\begin{table}[t]
\centering
\small
\caption{The errors \(ISE_{n,i,d,1}\) for \(n\in\{100,1000,10000\}\) and \(d\in\{1,\dots,5\}\) averaged over \(100\) repetitions.}
\label{tab:xi-mse}
\begin{tabular}{lc|ccccc}
\hline
\multicolumn{2}{c|}{Coefficient} & $d=1$ & $d=2$ & $d=3$ & $d=4$ & $d=5$\\
\hline
\multirow{3}{*}{\(\Xi_{0,1}\)} 
& \(n=100\) & 0.01442 & 0.00844 & 0.00626 & 0.0048 & 0.00299\\
& \(n=1000\) & 0.00356 & 0.00137 & 0.00083 & 0.00057 & 0.00041 \\
& \(n=10000\) & 0.00052 & 0.00033 & 0.00019 & 0.0001 & 0.00007\\ \hline
\multirow{3}{*}{\(\Xi_{0,2}\)}
& \(n=100\) & 0.01317 & 0.00918 & 0.00518 & 0.00405 & 0.00425\\
& \(n=1000\) & 0.00283 & 0.00151 & 0.00087 & 0.00047 & 0.0004 \\
& \(n=10000\) & 0.00057 & 0.00031 & 0.00024 & 0.00013 & 0.00007\\ \hline
\multirow{3}{*}{\(\Xi_{0,3}\)}
& \(n=100\) & 0.01243 & 0.00734 & 0.00508 & 0.00307 & 0.00198\\
& \(n=1000\) & 0.00306 & 0.00172 & 0.00079 & 0.00046 & 0.00037 \\
& \(n=10000\) & 0.00048 & 0.00039 & 0.00023 & 0.00017 & 0.00006 \\
\hline
\end{tabular}
\end{table}

\begin{figure}[t]
\centering
\includegraphics[width=\linewidth]{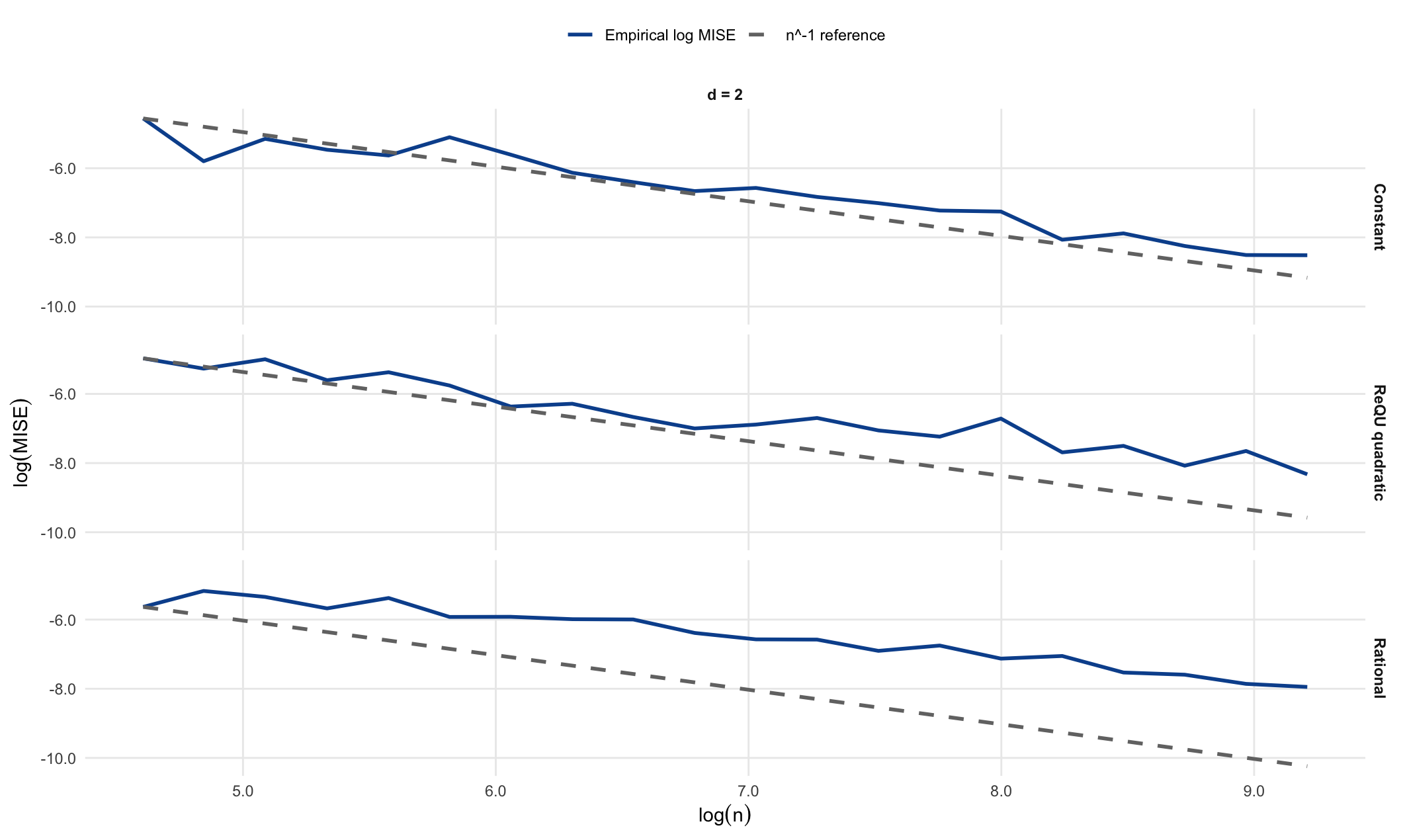}
\caption{The errors \(ISE_i^{(n,2)}\) averaged over 20 simulation runs and the benchmark \(n^{-1}\)-rate, log scale.}
\label{fig:rates20}
\end{figure}

In what follows, we are considering samples from the stationary density \(\pi_{0,i}\) corresponding to the coefficient \(\Xi_{0,i}\), \(i=1,2,3\), generated by rejection sampling. More precisely, we note that \(X\sim \pi_0\) is equal in distribution to \(\sqrt{T}Z/\Vert Z\Vert_2\) with \(Z\) a standard normal random variable and \(T\) having the density \((\pi^{d/2}/\Gamma(d/2))q_0(1+t)t^{d/2-1}\), \(t>0\), where the latter variable is simulated using acceptance-rejection algorithm with Gamma distribution having shape \(d/2\) and rate \(\kappa\). Given such sample, we follow the algorithm described in Subsection~\ref{subsec:mle} to estimate \(\widehat{\Xi}_{n,i}\). 
The value \(q\) in the network parametrisation is chosen by minimising the grid error 
\[ 
\operatorname{ISE}_{n,i,d,q}
=
\frac{1}{G}
\sum_{g=1}^{G}
\left\{
\widehat{\Xi}_{n,i,q}(r_g)
-\Xi_{0,i}(r_g)
\right\}^2,
\quad 
q\in\{1,2,3\},
\quad 
G=501,
\]
averaged over 20 simulation runs with samples of size \(n=10000\) and \(d=2\), 
where \(\widehat{\Xi}_{n,i,q}\) is the estimator obtained using the network with 
\(r_g:=0.05 a_{0,i}+(g-1)(0.9 a_{0,i}-0.05 a_{0,i})/(G-1)\) and \(a_{0,i}=q_{0,i}(1)\). Since the grid is based on the interval \(I_{0,i}:=[0.05 a_{0,i}, 0.9 a_{0,i}]\), it excludes 
the endpoints \(r=0\) and \(r=a_{0,i}\), where estimation is less stable: very small density values correspond to sparsely observed tails, while the exact upper endpoint corresponds only to the mode.
The resulting optimal value is \(q=1\), i.e., the chosen network is \(\NN(2,(1,3,1),10,1)\). 

Figure~\ref{fig:xi-estimates} presents the averaged estimator 
\(\widehat{\Xi}_{n,i,1}\) obtained using this optimal paramet-risation over 20 repetitions alongside the corresponding numerical confidence interval. 
It can be seen that the estimated and true curves are fairly close, and the errors do not exceed \(0.05\). 
This observation is further supported by Table~\ref{tab:xi-mse} containing the values of \(ISE_{n,i,d,1}\) obtained for different sample sizes \(n\in\{100,1000,10000\}\) and dimensions \(d\in\{1,\dots,5\}\) and averaged over \(100\) simulation runs. The resulting values do not exceed \(0.02\) even for the smallest number of observations, and the errors decay rather fast as the sample size grows. 
Finally, we analyse the rate of convergence of the constructed estimator \(\widehat{\Xi}_{n,i}\) to the true coefficient \(\Xi_{0,i}\). Figure~\ref{fig:rates20} depicts the error \(ISE_{n,i,2,1}\) obtained for different values of \(n\) from \(100\) to \(10000\) and averaged over \(20\) repetitions, on a log scale. 
It can again be seen that the error decreases with the growth of \(n\). Since \(\Xi_{0,1}\) and \(\Xi_{0,2}\) belong to the constructed sieve, one would expect the decay to be of order \(n^{-1}\) in these cases, while for \(\Xi_{0,3}\) it should be slower due to the presence of approximation error. As can be observed from the curves in Figure~\ref{fig:rates20}, in case of \(\Xi_{0,1}\) the rate of error decay is indeed approximately \(n^{-1}\), though for \(\Xi_{0,2}\) this is less evident. To the contrary, in case of \(\Xi_{0,3}\) the error converges to zero at a noticeably slower rate.
\section*{Funding}
The present research is supported by the Deutsche Forschungsgemeinschaft through the BE 3961/7-1 ``Statistische Inferenz f\"ur Teilchensysteme und McKean--Vlasov-SDEs mit singul\"aren Kernen''

\appendix
\section{Proofs}
\label{sec:proofs}
\subsection{Proof of Proposition~\ref{prop:invariant-general}}\label{proof:prop:invariant-general}
Due to Assumption~\ref{ass:coefficient}, the function \(g_\xi\) defined in~\eqref{eq:gxi} is a bijection with a \(C^1\)-inverse \(g_\xi^{-1}(z)\in (0,\infty)\) for all \(z\in\R\).
For \(\mu\in\mathbb R\), define
\(
\pi_{\xi,\mu}(x)
:=g_\xi^{-1}\!\left(\mu-\V(x)\right)
\)
and
\(
F_\xi(\mu)
:=
\int_{\mathbb R^d}\pi_{\xi,\mu}(x)\,\dd x.
\)
The coefficient bounds in Assumption~\ref{ass:coefficient} imply that
\(
g_\xi^{-1}(z)\leq e^{\kappa z}
\)
for \(z\leq0\).
Together with the coercivity of \(\V\) and the integrability of \(e^{-\kappa\V}\) guaranteed by Assumption~\ref{ass:potential}, this yields that \(F_\xi(\mu)<\infty\). Moreover, \(F_\xi\) is continuous and strictly increasing, with \(F_\xi(\mu)\to 0\) as \(\mu\to-\infty\) and \(F_\xi(\mu)\to\infty\) as \(\mu\to\infty\).
Consequently, there exists a unique \(\mu_\xi\in\mathbb R\) such that
\(
F_\xi(\mu_\xi)=1.
\)
Setting \(\pi_\xi\) as in~\eqref{eq:pi-general}, we get that \(\pi_\xi>0\) due to the range of \(g^{-1}_\xi\), and \(\pi_\xi\in C^1\) due to \(\V\in C^2\) and \(g_\xi^{-1}\in C^1\). Differentiation gives
\(
\nabla\pi_\xi(x)
=
-\pi_\xi(x)\xi(\pi_\xi(x))\nabla\V(x),
\)
so \(\pi_\xi\) satisfies the zero-flux equation. It remains to prove uniqueness. Let \(\widetilde\pi\) be another nonnegative \(C^1\) zero-flux stationary density. First note that \(\widetilde\pi\) is strictly positive. Indeed, if \(\widetilde\pi(x_0)=0\) for some \(x_0\in\R^d\), then, choosing for any \(x\in\mathbb R^d\) a \(C^1\) path \(\gamma\) joining \(x_0\) to \(x\) and setting
\(
y(t)=\widetilde\pi(\gamma(t)),
\)
we get from the zero-flux equation
\(
y'(t)
=
-y(t)\xi(y(t))
\nabla\V(\gamma(t))\cdot\gamma'(t).
\)
Since \(\xi\leq K\) and 
\(
t\mapsto\nabla\V(\gamma(t))\cdot\gamma'(t)
\) 
is continuous on \([0,1]\) and therefore bounded, 
there exists \(C_\gamma<\infty\) such that
\(|y'(t)|\leq C_\gamma |y(t)|.\) If \(y(0)=0\), Gr\"onwall's inequality implies \(y\equiv0\) along the path, implying that \(\widetilde\pi(x)=0\) for every \(x\) which contradicts its normalization. Thus, \(\widetilde\pi>0\), and since \(g'_\xi(r)=(r\xi(r))^{-1}\), the zero-flux condition can equivalently be rewritten as 
\(
\nabla(
g_\xi(\widetilde\pi(x))+\V(x)
)
=0.
\)
Since \(\mathbb R^d\) is connected, there exists \(\widetilde\mu\in\R\) such that
\(
g_\xi(\widetilde\pi(x))+\V(x)=\widetilde\mu\),
from which
\(
\widetilde\pi(x)
=
g_\xi^{-1}\!\left(\widetilde\mu-\V(x)\right).
\)
Normalization again implies \(F_\xi(\widetilde\mu)=1\). By uniqueness of the normalizing constant, \(\widetilde\mu=\mu_\xi\), and hence \(\widetilde\pi=\pi_\xi\).

\subsection{Proof of Lemma~\ref{lem:dnn}}\label{proof:lem:dnn}
Let \(k=\lceil\beta\rceil-1\) and \(\alpha_\beta=\beta-k\in(0,1]\). 
Since \(\Xi_0\in\cH^\beta\), it follows that \(\Xi_0\) has \(k\) derivatives and \(\Xi_0^{(k)}\) is \(\alpha_\beta\)-H\"older continuous. The aligned graded mesh satisfies, up to constants independent of \(j\) and \(m\), 
\(
r_{j,m}\asymp \left(j/m\right)^\vartheta,
\)
\(
\Delta_{j,m}:=r_{j,m}-r_{j-1,m}
\asymp m^{-\vartheta}j^{\vartheta-1}
\)
and \(\max_j\Delta_{j,m}\lesssim m^{-1}\).
We first construct a piecewise-polynomial approximation. Define the Hermite polynomial 
\(H_k(t)
= (\int_0^t z^k(1-z)^k\,\dd z)
/(\int_0^1 z^k(1-z)^k\,\dd z)\) 
for \(0\leq t\leq 1\).
On \(J_{j,m}=[r_{j-1,m},r_{j,m}]\), let
\(
t=(r-r_{j-1,m})\Delta_{j,m}^{-1}
\)
and set
\(
(Q_m\Xi_0)(r)
=
(1-H_k(t))(T_{r_{j-1,m}}\Xi_0)(r)
+
H_k(t)(T_{r_{j,m}}\Xi_0)(r),
\)
where \(T_x\Xi_0\) denotes the degree-\(k\) Taylor polynomial at \(x\). Since \(H_k\) and \(1-H_k\) vanish to order \(k+1\) at the appropriate endpoints, the pieces and their derivatives through order \(k\) agree at every knot. Taylor's theorem gives
\begin{equation}\label{eq:appr_deriv_bound}
\Vert
(Q_m\Xi_0-\Xi_0)^{(\ell)}
\Vert_{L^\infty(J_{j,m})}
\leq
C_\beta H\Delta_{j,m}^{\beta-\ell},
\qquad
0\leq\ell\leq k
\end{equation}
with some \(C_\beta>0\).

This approximation has to be corrected because errors in the primitive operator
\(
(\cK h)(r)
=
\int_1^r h(u)w_0(u)\,\dd u
\)
with
\(
w_0(u)=(u\Xi_0(u)^2)^{-1},
\)
could otherwise accumulate across cells. Define the polynomial bubbles \(B_{j,m}(r)= \Delta_{j,m}^{\beta} \phi_k((r-r_{j-1,m})/\Delta_{j,m}) \1_{J_{j,m}}(r)\) with \(\phi_k(t)=t^{k+1}(1-t)^{k+1}\), and set the approximant \(\xi_m\) to be the corrected spline 
\begin{equation}\label{eq:spline_def}
    \xi_m=Q_m\Xi_0+\sum_{j=1}^m c_{j,m}B_{j,m}.
\end{equation}
Choose \(c_{j,m}\) so that~\eqref{eq:approximant-moments} holds, which, given that on the cell \(J_{j,m}\) only \(B_{j,m}\) is non-zero, is equivalent to choosing
\[
c_{j,m}
=
-
\frac{
\int_{J_{j,m}}
(Q_m\Xi_0-\Xi_0)(u)w_0(u)\,\dd u
}{
\int_{J_{j,m}}
B_{j,m}(u)w_0(u)\,\dd u
}
=:\frac{E_{j,m}}{D_{j,m}}.
\]
These coefficients are uniformly bounded. Indeed, on the first cell, the endpoint interpolation gives
\[
|Q_m\Xi_0(u)-\Xi_0(u)|
\lesssim u^\beta,
\qquad 0\leq u\leq\Delta_{1,m},
\]
and, since \(\kappa\leq\Xi_0\leq K\),
it follows that 
\(|E_{j,m}|\lesssim
\Delta_{1,m}^{\beta}\),
whereas
\[
\int_0^{\Delta_{1,m}}
B_{1,m}(u)w_0(u)\,\dd u
=
\Delta_{1,m}^{\beta}
\int_0^1
\frac{\phi_k(t)}
{t\Xi_0(\Delta_{1,m}t)^2}\,\dd t
\gtrsim
\Delta_{1,m}^{\beta},
\]
implying \(|c_{1,m}|\lesssim1\). For \(j\geq2\), the weight \(w_0\) is comparable on \(J_{j,m}\), and it holds that
\(
|E_{j,m}|
\lesssim \Delta_{j,m}^\beta 
\int_{J_{j,m}} w_0(u)\,du
\)
and
\(
D_{j,m} \gtrsim \Delta_{j,m}^\beta 
\int_{J_{j,m}} w_0(u)\,du
\).
Hence \(\sup_{j,m}|c_{j,m}|<\infty\).
The bubbles and their derivatives through order \(k\) vanish at the cell endpoints and satisfy
\(
\|B_{j,m}^{(\ell)}\|_{L^\infty(J_{j,m})}
\lesssim
\Delta_{j,m}^{\beta-\ell}.
\)
Together with~\eqref{eq:appr_deriv_bound}, this shows that \(\xi_m\) is globally \(\beta\)-H\"older and 
\(
\|\xi_m\|_{\cH^\beta([0,U])}\leq H_1
\)
for a fixed \(H_1\) independent of \(m\).
Moreover, since 
\(\|\xi_m-\Xi_0\|_{\infty,[0,U]}
\lesssim m^{-\beta}\)
for \(m\) large enough, the interior condition~\eqref{eq:truth-interior-envelope} on \(\Xi_0\) gives
\(
\kappa+\eta_0/2
\leq
\xi_m
\leq
K-\eta_0/2.
\)
Consequently,
\(
(\xi_m-\kappa)/(K-\kappa)
\)
lies in the identity region of \(S_\tau\), so the clamping operation leaves \(\xi_m\), and hence the cancellations~\eqref{eq:approximant-moments}, unchanged.

We next control the forward approximation error. Write
\(
e_m:=\xi_m-\Xi_0.
\)
By~\eqref{eq:approximant-moments}, \((\cK e_m)(r_{j,m})=0\) at every knot. On the first cell this gives 
\[(\cK e_m)(r)
=
\int_0^r e_m(u)w_0(u)\,\dd u,
\quad 
0\leq r\leq\Delta_{1,m},
\]
and hence
\(
|(\cK e_m)(r)|
\lesssim r^\beta.
\)
Using~\eqref{eq:nu-zero-upper},
\begin{equation*}
\int_0^{\Delta_{1,m}}
|(\cK e_m)(r)|^2\nu_0(r)\,dr
\lesssim C_\nu\Delta_{1,m}^{2\beta+1}
\left(
1+\log\frac{r_0}{r}
\right)^\alpha
=O(m^{-2(\beta+1)}),
\end{equation*}
the last equality being due to \(\Delta_{1,m}\asymp m^{-\theta}\) and \(\theta(2\beta+1)>2\beta+2\).
For \(j\geq2\) and \(r\in J_{j,m}\), moment cancellation similarly yields
\(
|(\cK e_m)(r)|
\lesssim
\Delta_{j,m}^{\beta+1}r_{j-1,m}^{-1}.
\)
For the cells contained in \((0,r_0]\), the graded-mesh estimates and~\eqref{eq:nu-zero-upper} give
\begin{multline}
\sum_{\substack{j\geq2\\r_{j,m}\leq r_0}}
\int_{J_{j,m}}
|(\cK e_m)(r)|^2v_0(r)\,\dd r
\lesssim
m^{-\vartheta(2\beta+1)}
\sum_j
j^{\vartheta(2\beta+1)-(2\beta+3)}
\left(1+\log\frac{Cm}{j}\right)^{\alpha}\\ \lesssim
m^{-2(\beta+1)},
\end{multline}
the last inequality following because
\(
\vartheta(2\beta+1)-(2\beta+3)>-1.
\)
Away from zero, \(w_0\) is bounded and \(\Delta_{j,m}\lesssim m^{-1}\). Therefore,
\(
\sup_{r\geq r_0}|(\cK e_m)(r)|
\lesssim
m^{-(\beta+1)},
\)
and since \(\nu_0\) is a probability measure, the same order holds in \(L^2(\nu_0)\). Combining the two regions gives
\(
\|\cK e_m\|_{L^2(\nu_0)}
\lesssim
m^{-(\beta+1)}.
\)
Since by~\eqref{eq:c0-functional} 
\(
|\dot\mu_0(e_m)|
\lesssim
\|\cK e_m\|_{L^2(\nu_0)},
\)
we conclude that 
\(
\|\cA e_m\|_{L^2(\nu_0)}
\lesssim
m^{-(\beta+1)}.
\)

It remains to verify the network complexity. The function \(\xi_m\) is piecewise polynomial on \(m\) cells, of degree at most
\(
D_\beta=\max\{3k+1,2k+2\},
\)
which is independent of \(m\). Standard exact ReQU realizations of fixed-degree piecewise polynomials therefore give a network of fixed depth with \(O(m)\) nonzero parameters. Since the smallest cell has polynomial size in \(m^{-1}\), all rescaling weights are polynomially bounded. Thus \(\xi_m\in\cX_m\).
Finally, the full sparse-network sieve has \(O(m)\) active parameters, polynomially many available parameters and polynomial weight bounds. The standard sparse-network covering estimate consequently gives
\(\log N(\varepsilon,\cX_m,\norm{\cdot}_{\infty,[0,U]}) \leq C_{\mathrm{ent}} m\log(Cm/\varepsilon)\) over the full free-network sieve,
which completes the proof.

\subsection{Proof of Lemma~\ref{lem:kl-bernstein}}
\label{proof:lem:kl-bernstein}
For a given \(p^A\in\cP_A\), let \(Z_{p^A}(Y_1):=\log(p_0^A(Y_1)/p^A(Y_1))\). Denoting also \(R(y):=p_0^A(y)/p^A(y)\), one can observe that, since \(\E_{p_0^A}[1/R]=1\), it holds that 
\(\E_{p_0^A}[\log R(Y_1)-1+1/R(Y_1)]
=\E_{p_0^A}[Z_{p^A}(Y_1)]
=\KL(p_0^A\Vert p)\). 
Since 
\(
(\log r)^2 \leq B_\gamma (\log r-1+1/r)
\)
for any \(\gamma\leq r\leq \gamma^{-1}\)
with \(B_\gamma\) defined in~\eqref{eq:B-gamma}, it further follows that 
\[
\var(Z_{p^A}(Y_1))
\leq\E_{p_0}[Z_{p^A}^2(Y_1)]
\leq B_\gamma \KL(p_0^A\Vert p^A).
\]
Applying the Bernstein inequality to 
\(|X_{i,p^A}|
:=|Z_{p^A}(Y_i)-\E_{p_0^A}[Z_{p^A}(Y_i)]|
\leq 2\Lambda_\gamma\),
we get that, for any \(t\geq 0\),
\[
\left|
(P_n-P_0)Z_{p^A}
\right|
>\sqrt{\frac{2B_\gamma \KL(p_0^A\Vert p^A)t}{n}}
+\frac{2\Lambda_\gamma t}{3n}
\]
with probability no greater than \(2e^{-t}\). By the union bound, the probability that at least one \(p^A\in\cP_A\) fails is then upper bounded by \(2|\cP_A|e^{-t}\). Choosing \(t:=\log|\cP_A|+\log(2/\delta)\), we conclude~\eqref{eq:kl-bernstein}, and by the Young's inequality~\eqref{eq:kl-bernstein-linearized} follows.

\subsection{Proof of Proposition~\ref{prop:truncated-oracle}}
\label{proof:prop:truncated-oracle}
Let \(p^*\in\mathcal{P}_m^M\) be an oracle minimising \(K(p)=\KL(\pi_0^M\Vert p)\); if the infimum is not attained, choose \(p^*\in\mathcal{P}_m^M\) such that 
\(K(p^*)
\leq \inf_{p\in\mathcal{P}_m^M}K(p)+e_{p^*}
\)
with some error \(e_{p^*}\to 0\). By~\eqref{eq:approx-truncated-optimizer},
\[
P_n\log\widehat{p}
\geq P_n\log p^* - \varepsilon_{\mathrm{opt},M}
\Leftrightarrow 
K(\widehat{p})
\leq K(p^*)
+\Delta_n(p^*)
-\Delta_n(\widehat{p})
+\varepsilon_{\mathrm{opt},M},
\]
where \(\Delta_n(p):=(P_n-P_0)\log(p_0/p)\). While Lemma~\ref{lem:kl-bernstein} provides a useful bound for \(\Delta_n(p)\), it can only be applied in case when \(p\) belongs to some finite set, while \(\mathcal{P}_m^M\) can be infinite. To this end, choose representatives \(q^*(p^*)\), \(\widehat{q}(\widehat{p})\in \mathcal{N}_{m,\ve}^M\), where the latter set is finite. Since for all \(p,q\) it holds that 
\[
|\Delta_n(p)-\Delta_n(q)|
\leq \left|P_n\log\frac{q}{p}\right|
+\left|P_0\log\frac{q}{p}\right|
\leq \Vert \log\frac{q}{p}\Vert_{\infty,\,A_M}
\leq 2\mathfrak d_M(p,q),
\]
it follows that 
\[
\Delta_n(p^*)
\leq \Delta_n(q^*)
+2\mathfrak d_M(p^*,q^*)
\quad\text{and}\quad
-\Delta_n(\widehat{p})
\leq -\Delta_n(\widehat{q})
+2\mathfrak d_M(\widehat{p},\widehat{q}),
\]
from which, applying Lemma~\ref{lem:kl-bernstein},
\begin{multline*}
K(\widehat{p})
\leq K(p^*)
+\Delta_n(q^*) - \Delta_n(\widehat{q})
+2\mathfrak d_M(p^*,q^*)
+2\mathfrak d_M(\widehat{p},\widehat{q})
+\varepsilon_{\mathrm{opt},M}\\
\leq (1+\eta)K(p^*)
+\eta K(\widehat{p})
+\eta|K(q^*)-K(p^*)|
+2\mathfrak d_M(p^*,q^*)
+\eta|K(\widehat{p})-K(\widehat{q})|
+2\mathfrak d_M(\widehat{p},\widehat{q})\\
+2C_\eta \frac{B_{\gamma,M}(\log|\mathcal{N}_{m,\ve}^M|+\log(2/\delta))}{n}+\varepsilon_{\mathrm{opt},M}
\end{multline*}
for any \(\eta\in(0,1)\).
Since \(\eta<1\), and by the choice of \(q^*(p^*)\) and \(\widehat{q}(\widehat{p})\), it further holds that
\[
\eta|K(q^*)-K(p^*)|
+2\mathfrak d_M(p^*,q^*)
\leq L(p^*, q^*(p^*))
\leq \omega_m(\ve,M),
\]
and a similar bound holds for the quantity involving \(\widehat{q}(\widehat{p})\). Hence,
\[
(1-\eta)K(\widehat{p})
\leq (1+\eta)K(p^*)
+2\omega_m(\ve,M)
+2C_\eta \frac{B_{\gamma,M}(\log|\mathcal{N}_{m,\ve}^M|+\log(2/\delta))}{n}+\varepsilon_{\mathrm{opt},M}.
\]

\subsection{Proof of Lemma~\ref{lem:full-truncated-kl}}
\label{proof:lem:full-truncated-kl}
Since for all \(x\in A_M\) it holds that \(\pi_\xi(x)=(1-\tau_\xi(M))\pi_\xi^M(x)\), we have
\begin{eqnarray*}
    \KL(\pi_0\Vert \pi_\xi)
    &=&\int_{A_M} \pi_0(x)\log\frac{\pi_0(x)}{\pi_\xi(x)}\,dx
    +\int_{A_M^c} \pi_0(x)\log\frac{\pi_0(x)}{\pi_\xi(x)}\,dx \\
    &=&(1-\tau_0(M))\KL(\pi_0^M\Vert\pi_\xi^M)
    +(1-\tau_0(M))\log\frac{1-\tau_0(M)}{1-\tau_\xi(M)}
    +R_\xi(M).\label{KL_up3}
\end{eqnarray*}
Observing that \(1-\bar{\tau}_m(M)\leq 1-\tau_0(M), 1-\tau_\xi(M)\leq 1\) and hence \(\log(1-\tau_0(M)), \log(1-\tau_\xi(M))\in[\log(1-\bar{\tau}_m(M)),0]\), we further get that \(|(1-\tau_0(M))\log\frac{1-\tau_0(M)}{1-\tau_\xi(M)}|\leq d_m(M)\), and, since trivially, \(|R_\xi(M)|\leq \bar{R}_m(M)\), conclude~\eqref{eq:full-from-truncated} from~\eqref{eq:exact-full-truncated-kl}. Conversely,~\eqref{eq:exact-full-truncated-kl} implies 
\[
\KL(\pi_0^M\Vert\pi_\xi^M)
\leq \frac{1}{1-\tau_0(M)}\left(
\KL(\pi_0\Vert \pi_\xi)
+d_m(M) + \bar{R}_m(M)
\right).
\]
By~\eqref{eq:exp-envelope_gen},
\(
1-\tau_0(M)
\geq 1-\bar{\tau}_m(M)
\geq Z_\kappa^{-1}\int_{v_\star}^M e^{-K(s-v_\star)}\,ds,
\)
i.e., \(1-\tau_0(M)\geq c_M\) with
\[
c_M
:=\frac{\int_{v_\star}^M e^{-K(s-v_\star)}m_V(s)\,ds}{Z_\kappa}
=\frac{\int_{A_M} e^{-K(V(x)-v_\star)}\,dx}{\int_{\R^d} e^{-\kappa(V(x)-v_\star)}\,dx}
\in(0,1),
\]
the upper bound being due to finiteness of \(M\) alongside Assumption~\ref{ass:potential} guaranteeing continuity and coercivity of \(V\).

\subsection{Proof of Proposition~\ref{prop:full-mle-transfer}}
\label{proof:prop:full-mle-transfer}
The key idea is to extend the results obtained for the truncated experient to the non-truncated one, by evaluating the cost of resorting to truncation. To this end, define \(E_M:=\{X_1,\dots,X_n\in A_M\}\) and observe that \(X_1,\dots,X_n\) are i.i.d.\ with density \(\pi_0^M\) under \(\mathbb{P}_M:=\mathbb{P}(\cdot|E_M)\). Define \(L_n(\xi):=n^{-1}\sum_{i=1}^n\log\pi_\xi(X_i)\) and \(L_n^M(\xi):=\frac{1}{n}\sum_{i=1}^n \log\pi_\xi^M(X_i)\). By the same reasoning as in the proof of Lemma~\ref{lem:full-truncated-kl}, as well as the definition of \(\widehat{\Xi}_n\), it holds on \(E_M\), for all \(\xi\in\cX_m\), that
\[
L_n^M(\widehat{\Xi}_n)
-L_n^M(\xi)
=L_n(\widehat{\Xi}_n)-L_n(\xi)
-\log(1-\tau_{\widehat{\Xi}_n}(M))
+\log(1-\tau_\xi(M))
\geq -d_m(M),
\]
from which \(L_n^M(\widehat{\Xi}_n)\geq \sup_{\xi\in\cX_m} L_n^M(\xi)-d_m(M)\). Applying Proposition~\ref{prop:truncated-oracle} and Lemma~\ref{lem:full-truncated-kl}, we then get that
\begin{multline}
\KL(\pi_0^M\Vert\pi_{\widehat\Xi_n}^M)
 \leq
C_1\inf_{\xi\in\cX_m}\KL(\pi_0^M\Vert\pi_\xi^M)
 +C_2\frac{B_{\gamma_M}\{\mathcal H_m(\ve,M)+\log(2/\delta)\}}{n} \\
  +C_3\omega_m(\ve,M)
  +C_4d_m(M)+\bar{R}_m(M)
\end{multline}
with probability at least \(1-\delta\) under \(\mathbb{P}_M\), where \(C_i>0\), \(i\in\{1,\dots,4\}\), are some constants. Given~\eqref{eq:truncated-from-full}, the claim~\eqref{eq:full-mle-oracle-transfer} follows. Denoting the event that this inequality holds by \(G_M\), we get, since \(\mathbb{P}(E_M)=(1-\tau_0(M))^n\), 
that
\(
\mathbb{P}(G_M\cap E_M)
=\mathbb{P}(G_M|E_M)\mathbb{P}(E_M)
\geq (1-\delta)(1-\tau_0(M))^n
.
\)

\subsection{Proof of Proposition~\ref{prop:exact-forward-approx}}
\label{proof:prop:exact-forward-approx}
Denote \(e_m:=\xi_m-\Xi_0\) and observe that 
\[
G_m(r)
=\int_1^r \left(
\frac{1}{u\xi_m(u)}
-\frac{1}{u\Xi_0(u)}
\right)\,du
=-(\cK e_m)(r) 
+\int_1^r \frac{e_m^2(u)}{\xi_m(u)}w_0(u)\,du. 
\]
As was shown in Lemma~\ref{lem:dnn}, \(\Vert \cK e_m\Vert_{L^2(\nu_0)}\lesssim m^{-(\beta+1)}\). Define
\[
r_m(r):=\int_1^r
\frac{e_m^2(u)}{\xi_m(u)}w_0(u)\,du.
\]
Since \(\kappa\leq \xi_m,\Xi_0\), it holds that
\[
|r_m(r)|
\leq \frac{1}{\kappa^3}
\left|\int_1^r \frac{e_m^2(u)}{u}\,du\right|.
\]
Repeating the graded-cell argument used for \(\cK e_m\) gives
\[
\|r_m\|_\infty
\lesssim
\int_0^U\frac{|e_m(u)|^2}{u}\,\mathrm du
\lesssim
\Delta_{1,m}^{2\beta}
+
\sum_{j\geq2}
\frac{\Delta_{j,m}^{2\beta+1}}{r_{j-1,m}}
\lesssim
m^{-2\beta}
\lesssim m^{-(\beta+1)}
\]
for \(\beta>1\).

We next establish the uniform \(L^2(v_t)\)-bound. The cellwise
estimates in the proof of Lemma~\ref{lem:dnn} give
\(
\Vert\cK e_m\Vert_\infty
\lesssim m^{-\gamma}
\) with \(\gamma:=\min\{\vartheta\beta,\beta+1\}>1.
\)
Thus, setting \(\varepsilon_m:=\Vert G_m\Vert_\infty\), we have
\(
\varepsilon_m
\leq \Vert\cK e_m\Vert_\infty+\Vert r_m\Vert_\infty
\lesssim m^{-\gamma}
\lesssim m^{-1}.
\)
Recall that \(g_t=g_0+tG_m\), and write \(\widetilde{q}_t=g_t^{-1}\). Since
\(\Vert g_t-g_0\Vert_\infty\leq t\varepsilon_m\), we get 
\(
g_0(r)-t\ve_m 
\leq g_t(r)
\leq g_0(r)+t\ve_m
\)
for all \(r>0\), and, choosing 
\(r:=g_t^{-1}(y)\) with \(y\in\R\) fixed, using also that \(\widetilde{q}_0\) is increasing, get
\(
\widetilde{q}_0(y-t\varepsilon_m)
\leq \widetilde{q}_t(y)
\leq \widetilde{q}_0(y+t\varepsilon_m).
\)
Denoting 
\(F_t(\cdot)
:=\int_{\R^d}\widetilde{q}_t(\cdot-V(x))\,dx\)
and using the normalisation condition 
\(
F_t(\mu_t)=F_0(\mu_0)=1,
\)
we get by the above, since \(F_0\) is strictly increasing, that
\(
|\mu_t-\mu_0|\leq t\varepsilon_m.
\)
Consequently, for every \(x\in\R^d\),
\[
\left|g_0(\pi_t(x))-g_0(\pi_0(x))\right|
=\left|\mu_t-\mu_0-tG_m(\pi_t(x))\right|
\leq 2t\varepsilon_m.
\]
Since
\(
g_0'(r)=(r\Xi_0(r))^{-1}
\geq (Kr)^{-1},
\)
it follows that
\begin{equation}\label{eq:pit/pi0}
\left|\log\frac{\pi_t(x)}{\pi_0(x)}\right|
\leq K\left|
\int_{\pi_0(x)}^{\pi_t(x)}
\frac{1}{r\Xi_0(r)}\,dr
\right|
\leq K|g_0(\pi_t(x))
-g_0(\pi_0(x))|
\leq 2Kt\varepsilon_m.
\end{equation}
Moreover,
\[
G_m'(r)
=\frac{1}{r\xi_m(r)}-\frac{1}{r\Xi_0(r)}
=-\frac{e_m(r)}{r\xi_m(r)\Xi_0(r)},
\]
and hence
\(
\sup_{0<r\leq U}|rG_m'(r)|
\leq \Vert e_m\Vert_\infty \kappa^{-2}
\leq \kappa^{-2}C_{\mathrm{app}} m^{-\beta}.
\)
Together with~\eqref{eq:pit/pi0}, this yields
\[
|G_m(\pi_t(x))-G_m(\pi_0(x))|
\leq \sup_{0<r\leq U}
|rG'_m(r)|
\left|
\log\frac{\pi_t(x)}{\pi_0(x)}
\right|
\leq \frac{2K}{\kappa^2}C_{\mathrm{app}} t m^{-\beta}\varepsilon_m,
\]
and, since \(v_t\) is the distribution of \(\pi_t(X)\) under
\(X\sim\pi_t\), we conclude that,
for all \(0\leq t\leq 1\),
\begin{multline*}
\Vert G_m\Vert_{L^2(v_t)}
=\Vert G_m(\pi_t)\Vert_{L^2(\pi_t)}
\leq
\Vert G_m(\pi_0)\Vert_{L^2(\pi_t)}
\leq 2K\kappa^{-2}C_{\mathrm{app}} m^{-\beta}\varepsilon_m\\
\leq
e^{Kt\varepsilon_m}
\Vert G_m(\pi_0)\Vert_{L^2(\pi_0)}
+2K\kappa^{-2}C_{\mathrm{app}} m^{-\beta}\varepsilon_m\\
=
e^{Kt\varepsilon_m}
\Vert G_m\Vert_{L^2(v_0)}
+2K\kappa^{-2}C_{\mathrm{app}} m^{-\beta}\varepsilon_m.
\end{multline*}
Now,
\[
\Vert G_m\Vert_{L^2(v_0)}
\leq
\Vert\cK e_m\Vert_{L^2(v_0)}
+\Vert r_m\Vert_\infty
\leq C(C_{\mathrm{fwd}}) m^{-(\beta+1)},
\]
while \(m^{-\beta}\varepsilon_m\lesssim m^{-(\beta+1)}\).
Therefore,
\(
\sup_{0\leq t\leq1}
\Vert G_m\Vert_{L^2(v_t)}
\leq C_{\mathrm{path}} m^{-(\beta+1)}
\)
with \(C_{\mathrm{path}}\) depending on \(\kappa\), \(K\), \(C_{\mathrm{app}}\) and \(C_{\mathrm{fwd}}\).

Now observe that, by definition of \(\rho_t\), it holds \(\partial_t g_t(r)=\partial_t \int_1^r \rho_t(u)\,du=G_m(r)\), and in addition, \(g_t'(r)=\rho_t(r)\). Hence, we get in the same way as in~\eqref{eq:stat-time} that
\[
\dot \ell_t(x)
:=\partial_t \log\pi_t(x)
=\xi_t(\pi_t(x))\left(
\dot \mu_t - G_m(\pi_t(x))
\right),
\]
where
\(
\dot \mu_t 
=\left(\int_0^{a_t} \xi_t(r)G_m(r)v_t(dr)\right)\left(\int_0^{a_t} \xi_t(r)v_t(dr)\right)^{-1}
\)
due to the normalisation condition on \(\pi_t\) implying \(\int_{\R^d} \dot\ell_t(x)\pi_t(x)\,dx=0\). Since by the Cauchy-Schwarz inequality \(|\dot\mu_t|\leq (K/\kappa)\Vert G_m\Vert_{L^2(v_t)}\), it follows that
\begin{multline*}
    \Vert \dot\ell_t\Vert_{L^2(\Pi_t)}
    =\left(\int_{\R^d} \xi_t^2(\pi_t(x))
    |\dot\mu_t-G_m(\pi_t(x))|^2
    \pi_t(x)\,dx\right)^{1/2}\\
    =\left(\int_0 ^{a_t} 
    \xi_t^2(r)
    |\dot\mu_t-G_m(r)|^2
    v_t(dr)\right)^{1/2}\\
    \leq K\Vert \dot\mu_t-G_m\Vert_{L^2(v_t)}
    \leq K(1+K/\kappa) 
    \Vert G_m\Vert_{L^2(v_t)}
    \lesssim 
    m^{-(\beta+1)}
\end{multline*}
up to a constant 
\(C(\kappa, K, C_{\mathrm{path}})\),
where the second equality is due to \(R_t=\pi_t(X_t)\) following \(v_t\).
From this, since it holds that 
\(
\Vert \log (\pi_{\xi_t}/\pi_0)\Vert_{\infty}
=\Vert \int_0^t \dot \ell_u\,du\Vert_\infty
\lesssim \Vert G_m\Vert_{\infty}=o(1)
\), we get
\begin{multline*}
\chi^2(\pi_{\xi_t}\Vert \pi_0)^{1/2}
=\Vert \pi_{\xi_t}/\pi_0-1\Vert_{L^2(\Pi_0)}
\leq \int_0^1 \Vert (\pi_{\xi_u}/\pi_0)
\dot\ell_u \Vert_{L^2(\Pi_0)}\,du\\
\leq e^{K\ve_m}
\int_0^1 \Vert \dot\ell_u\Vert_{L^2(\Pi_u)}
\,du
\leq C(\kappa, K, C_{\mathrm{path}}) 
e^{K\ve_m}
m^{-(\beta+1)}.
\end{multline*}
Since \(\ve_m\leq C m^{-\gamma}\), choosing \(m_0\geq 1\), one can bound 
\(
e^{K\ve_m}
\leq e^{KC}
\)
by some absolute constant. 
Putting \(r:=\pi_{\xi_m}/\pi_0\), one can then observe that
\begin{multline*}
    \KL(\pi_0\Vert \pi_{\xi_m})
    \leq \chi^2(\pi_0\Vert \pi_{\xi_m})
    =\int_{\R^d} \left(
    \frac{1}{r(x)}-1
    \right)^2\pi_{\xi_m}(x)\,dx\\
    =\int_{\R^d} 
    \frac{(r(x)-1)^2}{r(x)}\pi_0(x)\,dx
    \leq e^{\Vert \log r\Vert_\infty}
    \chi^2(\pi_{\xi_m}\Vert\pi_0)\\
    \leq C^2(\kappa, K, C_{\mathrm{path}})  e^{\Vert \log r\Vert_\infty}
    m^{-2(\beta+1)}.
\end{multline*}
Enlarging the constants if necessary, we conclude the claim.

\subsection{Proof of Theorem~\ref{thm:xi-upper}}
\label{proof:thm:xi-upper}
Choosing \(C(I,a_0,d,V,K)<\ve_{I,K,V,a_0}\) with 
\(\ve_{I,K,V,a_0}\) given in Lemma~\ref{lem:mode-localization}, one can apply~\ref{lem:exact-kl-modulus} to get an upper bound in terms of the KL-divergence. By Theorem~\ref{thm:oracle}, the bound~\eqref{eq:oracle} holds with probability at least \((1-\delta)(1-\tau_0(M_n))^n\). Choosing \(m\asymp\left(n/(b_n\log n)\right)^{1/(2\beta+3)}\) as in Theorem~\ref{thm:oracle} yields the claim.

\subsection{Proof of Lemma~\ref{lem:primitive-kl}}
\label{proof:lem:primitive-kl} 
By the same argument as in the proof of Proposition~\ref{prop:exact-forward-approx}, it holds that \(\dot q_t(s)=(\dot\mu_t-G(q_t(s)))/\rho_t(q_t(s))\) with 
\[
|\dot \mu_t|
=\left|
\frac{
\int_0^{a_{\xi_t}}
G(r)m_V(\mu_t-g_t(r))\,dr
}{
\int_{v_\star}^\infty
\xi_t(q_t(s))q_t(s)m_V(s)\,ds
}
\right|
\leq 
\frac{C_{V,J}}{\kappa}\Vert G\Vert_{L^1(J)},
\]
where \(C_{V,J}\) is the upper bound for \(m_V\) on \(S_J\) given by Remark~\ref{rem:coarea-bounds},
and \(|\dot\mu_t|\leq C_{V,J}|J|\kappa^{-1}\Vert G\Vert_\infty\). 
The path score is
\[
  \dot\ell_t(x):=\partial_t\log\pi_t(x)
  =\xi_t(q_t(\V(x))) \left(
  \dot\mu_t-G(q_t(\V(x)))
  \right),
\]
which can be bounded as
\begin{multline*}
    \sup_{0\leq t\leq 1}\Vert \dot\ell_t\Vert_{L^2(P_{\xi_t})}^2
    \leq 2K^2|\dot\mu_t|^2
    +2\int_{v_\star}^\infty 
    \xi_t^2(q_t(s))G^2(q_t(s))m_V(s)
    q_t(s)\,ds\\
    \leq 2\left(
    \frac{C_{V,J} K}{\kappa}
    \right)^2
    \Vert G\Vert_{L^1(J)}^2
    +2K\int_J
    G^2(r)m_V(\mu_t-g_t(r))\,dr
    \leq C_{1,J,\kappa,K,V}
    \Vert G\Vert_{L^2(J)}^2
\end{multline*}
with\newline
\(C_{1,J,\kappa,K,V}:=2KC_{V,J}\left(
    C_{V,J}K|J|\kappa^{-2}
    +1
    \right)\),
    and 
    \(
    \Vert\dot\ell\Vert_\infty \leq C_{2,J,\kappa,K,V}\Vert G\Vert_\infty
    \)
    with\newline
    \(
    C_{2,J,\kappa,K,V}
    :=K(C_{V,J}|J|\kappa^{-1}+1).
    \)
Since 
\[
|\log(\pi_t(x)/\pi_0(x))|\leq\int_0^t |\dot\ell_u(x)|\,du\leq C_{2,J,\kappa,K,V}\Vert G\Vert_\infty
\]
and \(\partial_t\pi_t=\pi_t\dot\ell_t\), this yields
\begin{multline*}
    \chi^2(\Pi_{\xi_1}\Vert \Pi_{\xi_0})^{1/2}
    =\left\Vert
    \frac{\pi_{\xi_1}-\pi_{\xi_0}}{\sqrt{\pi_{\xi_0}}}
    \right\Vert_{L^2(dx)}
    \leq \int_0^1 
    \left\Vert
    \frac{\pi_{\xi_t}\dot\ell_t}{\sqrt{\pi_{\xi_0}}}
    \right\Vert_{L^2(dx)}
    \,dt\\
    \leq \sup_{0\leq t\leq 1}
    \left\Vert
    \frac{\pi_t}{\pi_{\xi_0}}
    \right\Vert_\infty^{1/2}
    \Vert\dot\ell_t\Vert_{L^2(\Pi_{\xi_t})}
    \leq \sqrt{2C_{1,J,\kappa,K,V}}
    \Vert G\Vert_{L^2(J)},
\end{multline*}
provided that \(\Vert G\Vert_\infty\leq \log 2/C_{2,J,\kappa,K,V}\). Since 
\(\KL(\Pi_{\xi_1}\Vert \Pi_{\xi_0})\leq \chi^2(\Pi_{\xi_1}\Vert \Pi_{\xi_0})\), we prove \eqref{eq:primitive-kl-bound}.

\subsection{Proof of Theorem~\ref{thm:minimax-lower}}
\label{proof:thm:minimax-lower}
Divide the interval $I$ into $M\asymp b^{-1}$ disjoint intervals $I_j = [r_j, r_j+b]$. If the exact primitive anchor $1$ lies within $I$, omit the single sub-interval containing it. 
Because $\int_0^1 \psi(v)\,\dd v = 0$, omitting this cell guarantees that the integral of the local perturbation vanishes exactly on both boundaries of every remaining cell $j$; the number of active parameters remains $M \asymp b^{-1}$. 
Choose a smooth function \(\psi\) supported on \((0,1)\), having derivatives up to order \(\beta\) and satisfying \(\int_0^1 \psi(u)\,du=0\). 
Define \(\psi_j(r):=b^\beta \psi((r-r_j)/b)\) and, for \(\theta\in\{0,1\}^M\), set \(\rho_\theta(r) := \rho_\star(r) + \alpha \sum_{j=1}^M \theta_j \psi_j(r)\) and \(\xi_\theta(r) = (r\rho_\theta(r))^{-1}\). Let \(\alpha>0\) be small enough, so that \(\xi_\theta\in\mathfrak X_\beta(J)\). 
For \(\theta\) and \(\theta^{(j)}\) neighbouring strings differing only in the \(j\)-th coordinate it holds that 
\(
\rho_{\theta^{(j)}}-\rho_\theta
=\alpha(\theta_j^{(j)}-\theta_j)\psi_j
=\pm\alpha\psi_j
\),
from which it follows, using the definition of \(\rho_\cdot\),
\[
|\xi_{\theta^{(j)}}(r)-\xi_\theta(r)|
=\left|
\frac{\alpha\psi_j(r)}{r\rho_\theta(r)\rho_{\theta^{(j)}}(r)}
\right|
=\alpha r\xi_\theta(r)\xi_{\theta^{(j)}}(r)|\psi_j(r)|.
\]
Given that on \(I\) it holds \(r\in[r_-,r_+]\Subset(0,a_\star)\) and \(\xi_\cdot\in[\kappa,K]\), and since \(\Vert \psi_j\Vert_{L^2(I)}^2=b^{2\beta+1}\Vert\psi\Vert_{L^2(0,1)}^2\), this yields
\[
(\alpha r_- \kappa^2 \Vert\psi\Vert_{L^2(0,1)})^2
b^{2\beta+1}
\leq \Vert \xi_{\theta^{(j)}}-\xi_\theta\Vert_{L^2(I)}^2
\leq (\alpha r_+ K^2 \Vert\psi\Vert_{L^2(0,1)})^2
b^{2\beta+1}.
\]
Now define \(G_j(r):=\int_1^r (\rho_{\theta^{(j)}}(u)
-\rho_\theta(u))\,du\). By above, \(G_j\) vanishes outside 
\(J\), and satisfies the bounds 
\(\Vert G_j\Vert_\infty \leq \alpha b^{\beta+1}\Vert\Psi\Vert_\infty\) 
and 
\(\Vert G_j\Vert_{L^2(I)}^2\leq \alpha^2 b^{2\beta+3}\Vert\Psi\Vert_{L^2(0,1)}^2\) 
with \(\Psi(v):=\int_0^v \psi(u)\,du\). 
Choosing \(\alpha\) and \(b\) such that \(\Vert G_j\Vert_\infty\leq \log 2/(K(1+C_{V,J}|J|\kappa^{-1}))\), one can apply Lemma~\ref{lem:primitive-kl} to get  
\[
\KL(\Pi_{\xi_{\theta^{(j)}}}\Vert \Pi_{\xi_\theta})\leq C_{J,\kappa,K,V}\Vert G_j\Vert_{L^2(I)}^2\lesssim b^{2\beta+3} \quad \text{and} \quad  
\KL(\Pi_{\xi_{\theta^{(j)}}}^{\otimes n}\Vert \Pi_{\xi_\theta}^{\otimes n})\lesssim nb^{2\beta+3}.
\]
Choosing now \(b=c_0 n^{-1/(2\beta+3)}\) with \(c_0\) small enough, one obtains 
\(\KL(\Pi_{\xi_{\theta^{(j)}}}^{\otimes n}\Vert \Pi_{\xi_\theta}^{\otimes n})\lesssim c_0^{2\beta+3}\). Applying the standard testing form of Assouad's lemma (see, e.g.,~\cite{tsybakov2009}), which bounds the minimax risk via the Hamming cube parameter dimension $M$ multiplied by the squared separation distance, we then get 
\(\inf_{\wh\Xi_n}
  \sup_{\xi\in\mathfrak X_\beta(J)}
  \E_\xi\Vert\wh\Xi_n-\xi\Vert_{L^2(I)}^2 \gtrsim Mb^{2\beta+1}\). Since \(M\asymp b^{-1}\asymp n^{1/(2\beta+3)}\), we conclude~\eqref{eq:lower-risk}. The fixed-probability lower bound~\eqref{eq:lower-prob} follows from the same Assouad argument: the uniform total-variation bound keeps the minimax Hamming testing risk bounded away from zero, and the separation of the Assouad cube then yields an \(L^2(I)\)-error of order \(n^{-\beta/(2\beta+3)}\) with probability bounded below by a positive constant.

\section*{Auxiliary results}

\begin{lemma}
\label{lem:ode-stability}
For any candidates $\xi,\zeta\in\cX_m$, the truncated invariant densities satisfy
\(
 \mathfrak d_M(\pi_\xi^M,\pi_\zeta^M) \le C_{\mathrm{st}}(1+M)\norm{\xi-\zeta}_{\infty,[0,U]}
\)
with some constant \(C_{\mathrm{st}}>0\) depending on \(V\), \(\kappa\) and \(K\).
\end{lemma}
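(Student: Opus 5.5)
We bound the log-ratio pointwise on \(A_M\) and then handle the two normalizing constants separately. Write
\[
\log\pi_\xi^M(x)-\log\pi_\zeta^M(x)
=\log\frac{\pi_\xi(x)}{\pi_\zeta(x)}-\log\frac{1-\tau_\xi(M)}{1-\tau_\zeta(M)},\qquad x\in A_M,
\]
so it suffices to control \(\sup_{A_M}|\log\pi_\xi-\log\pi_\zeta|\) and the normalization difference.

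\textbf{Step 1: primitives.} Set \(\varepsilon:=\|\xi-\zeta\|_{\infty,[0,U]}\). Since both coefficients are extended by constants beyond \(U\), the bound \(|\xi-\zeta|\le\varepsilon\) holds on all of \(\R_+\). With \(\kappa\le\xi,\zeta\le K\),
\[
|g_\xi'(r)-g_\zeta'(r)|\le \frac{\varepsilon}{\kappa^2 r},
\qquad\text{hence}\qquad
|g_\xi(r)-g_\zeta(r)|\le \kappa^{-2}\varepsilon\,|\log r|.
\]
This bound is not uniform as \(r\to0\). On \(A_M\), however, the envelope \eqref{eq:exp-envelope_gen} gives \(|\log\pi_\xi(x)|\lesssim 1+K(M-v_\star)\), and the same holds for \(\pi_\zeta\). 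This is the source of the factor \(1+M\).

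\textbf{Step 2: normalizing constants.} I would argue as in the proof of Proposition~\ref{prop:exact-forward-approx}, using the sandwich \(\widetilde q_\zeta(y-\eta)\le\widetilde q_\xi(y)\le\widetilde q_\zeta(y+\eta)\). Because the discrepancy grows logarithmically, one cannot use a single constant \(\eta\). Instead, work in log-coordinates. Let \(\ell=\log r\) and \(h_\xi(\ell):=g_\xi(e^\ell)\). Then \(h_\xi'=1/\xi(e^\ell)\in[K^{-1},\kappa^{-1}]\) and
\[
|h_\xi'-h_\zeta'|\le\kappa^{-2}\varepsilon .
\]
From \(h_\xi(\log\pi_\xi(x))=\mu_\xi-V(x)\), the function \(u(x):=\log\pi_\xi(x)-\log\pi_\zeta(x)\) satisfies
\[
h_\xi(\log\pi_\xi)-h_\xi(\log\pi_\zeta)=\mu_\xi-\mu_\zeta+\big(h_\zeta-h_\xi\big)(\log\pi_\zeta).
\]
The left-hand side lies between \(u/K\) and \(u/\kappa\) in magnitude, and the last term is bounded by \(\kappa^{-2}\varepsilon\,|\log\pi_\zeta|\). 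Therefore
\[
|u(x)|\le K\big(|\mu_\xi-\mu_\zeta|+\kappa^{-2}\varepsilon(1+|\log\pi_\zeta(x)|)\big).
\]
To bound \(|\mu_\xi-\mu_\zeta|\), I would use the normalization \(\int\pi_\xi=\int\pi_\zeta=1\). The function \(u\) cannot have a fixed sign, so there is a point \(x_0\) with \(u(x_0)=0\). Moreover, the displayed identity shows that \(\mu_\xi-\mu_\zeta\) is sandwiched by \(\pm\kappa^{-2}\varepsilon(1+|\log\pi_\zeta(x_0)|)\). Choosing \(x_0\) of bounded energy requires an argument. The cleanest route, which I expect to be the main obstacle, is a monotonicity argument. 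I would show that \(\mu\mapsto\int\widetilde q(\mu-V)\) shifts by at most \(C\varepsilon\) using integrability of \((1+V)e^{-\kappa V}\). This gives \(|\mu_\xi-\mu_\zeta|\le C_V\varepsilon\), with \(C_V\) independent of \(M\).

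\textbf{Step 3: conclusion.} Combining Steps 1 and 2 yields
\[
\sup_{A_M}|u|\le C(1+M)\varepsilon .
\]
Finally,
\[
\big|\log(1-\tau_\xi(M))-\log(1-\tau_\zeta(M))\big|
\le \frac{\int_{A_M}|\pi_\xi-\pi_\zeta|}{c_M}
\le c_M^{-1}\big(e^{\sup_{A_M}|u|}-1\big),
\]
where \(c_M\) is bounded below as in Lemma~\ref{lem:full-truncated-kl}. This term is \(\lesssim(1+M)\varepsilon\) whenever \((1+M)\varepsilon\le1\). In the opposite case \((1+M)\varepsilon>1\), the claim follows trivially: by the envelopes, both log-densities on \(A_M\) are \(O(1+M)\) in magnitude, so their difference is bounded by \(C(1+M)\le C(1+M)^2\varepsilon\). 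I would then adjust the constants (or restrict to small \(\varepsilon\), which is the only regime used for the nets) to conclude.
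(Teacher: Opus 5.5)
The gap is in Step 3, in how you treat the normalizing constants. Your bound \(c_M^{-1}(e^{\sup_{A_M}|u|}-1)\) is linear in \(\varepsilon\) only when \((1+M)\varepsilon\lesssim1\). The factor \(c_M^{-1}\) is also not uniform as \(M\downarrow v_\star\). Your fallback for \((1+M)\varepsilon>1\) gives only \(C(1+M)\le C(1+M)^2\varepsilon\), which is off by a full factor \(1+M\). Since \(M\) is arbitrary (and \(M_n\asymp\log n\) in the application), no adjustment of constants repairs this. The range \(1/(1+M)<\varepsilon\ll1\) is simply not covered. Restricting to small \((1+M)\varepsilon\) is enough for the covering bound, but it proves a weaker statement than the lemma.

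The missing observation is that the truncated masses are \(\pi_\zeta\)-averages of \(e^{u}\):
\[
\frac{1-\tau_\xi(M)}{1-\tau_\zeta(M)}=\frac{\int_{A_M}e^{u}\pi_\zeta\,\dd x}{\int_{A_M}\pi_\zeta\,\dd x}\in\bigl[e^{-\sup_{A_M}|u|},e^{\sup_{A_M}|u|}\bigr].
\]
So the normalization term is bounded by \(\sup_{A_M}|u|\) in every regime. Hence \(\mathfrak d_M(\pi_\xi^M,\pi_\zeta^M)\le2\sup_{A_M}|u|\le C(1+M)\varepsilon\), with no case distinction and no \(c_M\). This is the integrated form of the paper's estimate \(|\tfrac{\dd}{\dd t}\log(1-\tau_{\xi_t}(M))|\le\sup_{v_\star\le s\le M}|\partial_t\log q_{\xi_t}(s)|\).

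Apart from this, your Steps 1--2 are a valid static alternative to the paper's argument. The paper differentiates \(\log q_{\xi_t}\) along \(\xi_t=(1-t)\zeta+t\xi\). It bounds \(|(\cK_t(\xi-\zeta))(q_{\xi_t}(s))|\lesssim\|\xi-\zeta\|_\infty(1+s)\) and \(|\dot\mu_{\xi_t}|\lesssim\|\xi-\zeta\|_\infty\). You instead compare the two profiles directly through \(h_\xi(\log\pi_\xi)=\mu_\xi-V\), which avoids having to justify differentiability in \(t\).

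However, the bound \(|\mu_\xi-\mu_\zeta|\le C\varepsilon\), which you rightly call the crux, is only announced. The linearized shift you sketch handles small \(\varepsilon\); large \(\varepsilon\) would need the uniform boundedness of \(\mu_\xi\) separately. Your own identity closes it in one stroke. Suppose \(\Delta:=\mu_\xi-\mu_\zeta>0\). Then \(u=\lambda(\Delta+D)\) with \(\lambda\in[\kappa,K]\) and \(D:=(h_\zeta-h_\xi)(\log\pi_\zeta)\), so \(u\ge\kappa\Delta-K\kappa^{-2}\varepsilon|\log\pi_\zeta|\). Using \(0=\int\pi_\zeta(e^{u}-1)\,\dd x\ge\int\pi_\zeta u\,\dd x\), one gets \(\Delta\le K\kappa^{-3}\varepsilon\,\E_{\pi_\zeta}|\log\pi_\zeta|\lesssim\varepsilon\) by the exponential envelopes. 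The case \(\Delta<0\) is symmetric.
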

\begin{proof}
For two candidates \(\xi,\zeta\in\cX_m\), let \(\xi_t(r) = (1-t)\zeta(r)+t\xi(r)\). By the same argument as in~\eqref{eq:stat-time}, it holds that
\[
\partial_t \log q_t(s)
=\xi_t(q_{\xi_t}(s))\left(
\dot \mu_{\xi_t} + (\cK_t h)(q_{\xi_t}(s))
\right)
\]
with \(q_{\xi_t}\) and \(\mu_{\xi_t}\) defined as before, and \(h:=\xi-\zeta\). For \((\cK_t h)(q_{\xi_t}(s))\) it holds, since \(\kappa\leq \xi_t(r)\leq K\) for all \(r\geq 0\), 
\[
\left| (\cK_t h) (r) \right|
\leq \frac{\Vert h\Vert_{\infty,\,[0,U]}}{\kappa^2}|\log r|,
\quad r\geq 0.
\]
Taking \(r=q_{\xi_t}(s)\) and using~\eqref{eq:exp-envelope_gen}, we conclude that 
\[|(\cK_t h)(q_{\xi_t}(s))|\leq C_{V,\kappa,K}\Vert h\Vert_{\infty,\,[0,U]}(1+s)\]
with some 
\(C_{V,\kappa,K}:=C(V,\kappa,K)>0\) 
for all \(s\geq 0\). Similarly, for \(\dot\mu_t\) one gets, by the same argument as in~\eqref{eq:c0-functional},
\[
\dot\mu_{\xi_t}
=-\frac{\int_{v_\star}^\infty 
q_{\xi_t}(s)\xi_t(q_{\xi_t}(s))(\cK_t h)(q_{\xi_t}(s))m_V(s)\,ds}{\int_{v_\star}^\infty q_{\xi_t}(s)\xi_t(q_{\xi_t}(s))m_V(s)\,ds}.
\]
The denominator of the expression above is separated from zero due to~\eqref{qmv_norm},
while for the numerator we have
\begin{multline*}
\int_{v_\star}^\infty 
q_{\xi_t}(s)\xi_t(q_t(s))(\cK_t h)(q_{\xi_t}(s))m_V(s)\,ds\\
\leq KC_{V,\kappa,K}\Vert h\Vert_{\infty,\,[0,U]}
\int_{v_\star}^\infty 
(1+s)q_{\xi_t}(s)m_V(s)\,ds,
\end{multline*}
the integral being finite again by~\eqref{eq:exp-envelope_gen}. Enlarging the constant \(C_{V,\kappa,K}\) if necessary, we then conclude \(|\dot\mu_t|\leq C_{V,\kappa,K}\Vert h\Vert_{\infty,\,[0,U]}\) and \(\partial_t \log q_{\xi_t}\leq C_{V,\kappa,K}\Vert h\Vert_{\infty,\,[0,U]}(1+s)\) for all \(s\geq 0\). It follows that
\[
|\log q_\zeta(s) - \log q_\xi (s)|
\leq \int_0^1 |\partial_t \log q_t(s)|\,dt
\leq C_{V,\kappa,K}\Vert h\Vert_{\infty,\,[0,U]}(1+s)
\]
for some generic constant \(C_{V,\kappa,K}\) depending on \(V\), \(\kappa\), \(K\). Taking \(s=V(x)\), one obtains \(|\log \pi_\zeta(x)-\log\pi_\xi(x)|\leq C_{V,\kappa,K}
(1+M)\Vert h\Vert_{\infty,\,[0,U]}\). 

Observe now that for the truncated densities it holds, for all \(x\in A_M\), that
\[
\log\pi_\xi^M(x)-\log\pi_\zeta^M(x)
=\log \pi_\xi(x)-\log\pi_\zeta(x)
-\log(1-\tau_\xi(M))
+\log(1-\tau_\zeta(M)),
\]
where, by above, the first difference is bounded by \(C_{V,\kappa,K}(1+M)\Vert h\Vert_{\infty,\,[0,U]}\). To bound the remaining terms, one can note that, similar to before,
\begin{eqnarray*}
\left|
\frac{d}{dt}\left(1-\tau_{\xi_t}(M)\right)
\right|
&=&\left|
\int_{v_\star} ^M 
(\partial_t q_{\xi_t}(s))m_V(s)\,ds
\right|\\
&=&\left|
\int_{v_\star} ^M 
q_{\xi_t}(s)
(\partial_t \log q_{\xi_t}(s))m_V(s)\,ds
\right|\\
&\leq& C_{V,\kappa,K}(1+M)\Vert h\Vert_{\infty,\,[0,U]}
\int_{v_\star}^M q_{\xi_t}(s)m_V(s)\,ds\\
&=&C_{V,\kappa,K}(1+M)\Vert h\Vert_{\infty,\,[0,U]} 
(1-\tau_{\xi_t}(M)),
\end{eqnarray*}
from which 
\(|\frac{d}{dt}
\log(1-\tau_{\xi_t}(M))|
\leq C_{V,\kappa,K}(1+M)\Vert h\Vert_{\infty,\,[0,U]}\) and consequently 
\(|\log(1-\tau_\xi(M))
-\log(1-\tau_\zeta(M))|
\leq C_{V,\kappa,K}(1+M)\Vert h\Vert_{\infty,\,[0,U]}\). 
Taking, e.g., 
\(C_{\mathrm{st}}=2C_{V,\kappa,K}\) yields the claim.
\end{proof}

\begin{lemma}\label{lem:explicit-tails}
    Under Assumptions~\ref{ass:potential} and~\ref{ass:coefficient}, it holds that
    \begin{eqnarray*}
        d_m(M)
        \lesssim 
        \bar{\tau}_m(M)
        &\leq& C_\tau
        e^{-c_aM},
        \quad \forall\,c_a\in(0,\kappa),\\
        \bar{R}_m(M)
        &\leq& C_{R,\ve} e^{-(\Xi_0(0)-\ve)M},
        \quad \forall\,\ve\in(0,\Xi_0(0)),\\
        \Lambda_{\gamma_M}+B_{\gamma_M}
        &\leq& C_\gamma (1+M)
    \end{eqnarray*}
    for \(M\) large enough,
    where 
    \(
    C_\tau=C(V,\kappa,K,c_a),
    \)
    \(
    C_{R,\ve}=C(a_0, V, \kappa, K, L, \ve)
    \)
    and
    \(
    C_\gamma=C(V,\kappa,K)
    \)
    are some positive constants.
\end{lemma}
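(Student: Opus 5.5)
We work entirely in the energy variable \(s=V(x)\), using the exponential envelope~\eqref{eq:exp-envelope_gen} together with the growth bound on \(m_V\) from Assumption~\ref{ass:log}.

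\textbf{Step 1: the tail mass.} By the coarea formula,
\[
\tau_\xi(M)=\int_M^\infty q_\xi(s)m_V(s)\,\dd s\leq Z_K^{-1}\int_M^\infty e^{-\kappa(s-v_\star)}m_V(s)\,\dd s .
\]
Since \(m_V(s)\lesssim(1+s-v_\star)^\alpha\) for large \(s\), the polynomial factor is absorbed as \((1+s)^\alpha e^{-\kappa s}\lesssim e^{-c_a s}\) for any \(c_a<\kappa\). This gives \(\bar\tau_m(M)\le C_\tau e^{-c_aM}\), uniformly over \(\cX_m\cup\{\Xi_0\}\). (Alternatively, use integrability of \(e^{-aV}\) with \(a=\kappa-c_a\) and Markov-type splitting.) Then \(d_m(M)=-\log(1-\bar\tau_m)\le 2\bar\tau_m\) once \(\bar\tau_m\le 1/2\).

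\textbf{Step 2: the log-ratio bound.} For \(\bar R_m\), the first task is a pointwise bound on \(|\log(\pi_0/\pi_\xi)|\) at energy \(s\). By~\eqref{eq:exp-envelope_gen}, both \(\log q_0(s)\) and \(\log q_\xi(s)\) lie between \(-\log Z_\kappa-K(s-v_\star)\) and \(-\log Z_K-\kappa(s-v_\star)\). Hence
\[
\left|\log\frac{q_0}{q_\xi}\right|\le C+(K-\kappa)(s-v_\star).
\]
This linear growth also yields the third claim. Indeed, on \(A_M\) we have \(\Lambda_{\gamma_M}\le C+(K-\kappa)(M-v_\star)+2\,|\log(1-\bar\tau_m)|\), which accounts for the normalisations of the truncated densities. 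Moreover,
\[
B_\gamma=\frac{\Lambda^2}{\Lambda-1+\gamma}\lesssim\Lambda \quad\text{for }\Lambda\ge 2,
\]
so \(\Lambda_{\gamma_M}+B_{\gamma_M}\le C_\gamma(1+M)\).

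\textbf{Step 3: the tail decay with the sharp exponent.} The remaining bound on \(\bar R_m(M)\) is the main obstacle, because the claimed exponent is \(\Xi_0(0)-\ve\) rather than \(\kappa\). Using only the envelope, the decay would be \(e^{-\kappa M}\) up to polynomial factors. We therefore need the sharp tail behaviour of the true profile \(q_0\). Write \((\log q_0)'=-\Xi_0(q_0)\) and note that \(q_0(s)\to0\) as \(s\to\infty\). By Lipschitz continuity of \(\Xi_0\) (Assumption~\ref{ass:coefficient}), \(\Xi_0(q_0(s))\ge\Xi_0(0)-Lq_0(s)\). Since \(q_0(s)\le Z_K^{-1}e^{-\kappa(s-v_\star)}\), there is \(s_\ve\), depending only on \(a_0,\kappa,K,L,\ve\), such that \(\Xi_0(q_0(s))\ge\Xi_0(0)-\ve/2\) for all \(s\ge s_\ve\). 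Integrating from \(s_\ve\) gives
\[
q_0(s)\le a_0 e^{-(\Xi_0(0)-\ve/2)(s-s_\ve)}, \qquad s\ge s_\ve .
\]

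\textbf{Step 4: assembling \(\bar R_m\).} Combining Steps 2 and 3,
\[
\bar R_m(M)\le\int_M^\infty q_0(s)\bigl(C+(K-\kappa)(s-v_\star)\bigr)m_V(s)\,\dd s .
\]
The polynomial factors \((1+s)^{1+\alpha}\) cost only the extra \(\ve/2\) in the exponent, which yields \(\bar R_m(M)\le C_{R,\ve}e^{-(\Xi_0(0)-\ve)M}\) with constants of the stated dependence. The step requiring the most care is choosing \(s_\ve\) uniformly, so that the constant depends only on \(a_0\), \(L\) and \(\ve\). The bound must also be uniform in \(\xi\); this holds because the \(\xi\)-dependence enters only through the envelope bound on \(|\log q_\xi|\).
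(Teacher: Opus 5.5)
Your proposal is correct and follows essentially the same route as the paper. The two-sided envelope~\eqref{eq:exp-envelope_gen} gives the linear log-ratio bound (hence $\Lambda_{\gamma_M}+B_{\gamma_M}\lesssim 1+M$) and the uniform tail-mass bound. The Lipschitz continuity of $\Xi_0$ at $0$ then sharpens the decay of $q_0$ to exponent $\Xi_0(0)$. The paper gets this more directly by integrating $|\Xi_0(q_0(v_\star+u))-\Xi_0(0)|\le La_+e^{-\kappa u}$ over the whole half-line, which gives $q_0(v_\star+y)\le a_0e^{La_+/\kappa}e^{-\Xi_0(0)y}$ with no $\ve/2$ loss and no threshold $s_\ve$.

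The one thing to fix is the hypotheses. The lemma assumes only Assumptions~\ref{ass:potential} and~\ref{ass:coefficient}, but in Steps 1 and 4 you absorb polynomial factors using the growth bound on $m_V$ from Assumption~\ref{ass:log}, which is not available here. Instead, use integrability of $e^{-aV}$ in $x$-space, as the paper does. For example, $\int_{\{V>M\}}(1+V-v_\star)e^{-b(V-v_\star)}\,\dd x\le e^{-(b-\ve/2)(M-v_\star)}\int_{\R^d}(1+V-v_\star)e^{-(\ve/2)(V-v_\star)}\,\dd x$, and the last integral is finite under Assumption~\ref{ass:potential}.
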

\begin{proof}
By~\eqref{eq:exp-envelope_gen}, 
\[
\left|
\log\frac{\pi_0(x)}{\pi_\xi(x)}
\right|
\leq \log\frac{a_+}{a_-}
+(K-\kappa)(V(x)-v_\star)
\leq \log\frac{a_+}{a_-}
+(K-\kappa)(M-v_\star),
\quad \forall\,x\in A_M,
\]
where \(a_-:=Z_\kappa^{-1}\). Since also
\(1-\tau_0(M)\geq 1-\bar{\tau}_m(M)\geq c_M\),
it holds 
\[
\frac{\pi_0^M(x)}{\pi_\xi^M(x)}
=\frac{\pi_0(x)}{\pi_\xi(x)}
\frac{1-\tau_\xi(M)}{1-\tau_0(M)}
\leq \frac{a_+}{c_M a_-}
e^{(K-\kappa)(M-v_\star)},
\]
and, choosing \(\gamma_M\geq c_Ma_-/a_+ e^{-(K-\kappa)(M-v_\star)}\), we get 
\[\Lambda_{\gamma_M}=\log(1/\gamma_M)
\leq C(V,\kappa,K) (1+M).\] 
For \(B_{\gamma_M}\), observing that \(f(u)=(u+2)(u-1+e^{-u})-u^2\) is equal to zero at \(u=0\) and possesses a non-negative derivative, one obtains 
\(B_{\gamma_M}\leq \Lambda_{\gamma_M}+2\),
yielding the bound for \(\Lambda_{\gamma_M}+B_{\gamma_M}\) for some enlarged constant \(C_\gamma\). 
For the tail probability, since
\[
\bar{\tau}_m(M)
\leq a_+ \int_{\{V(x)>M\}}
e^{-\kappa(V(x)-v_\star)}\,dx
\leq C_a e^{-c_a(M-v_\star)},
\quad \forall\,c_a<\kappa,
\]
where \(C_a:=a_+\int_{\R^d} e^{-(\kappa-c_a)(V(x)-v_\star)}\,dx<\infty\) 
by Assumption~\ref{ass:potential}, it holds 
\(\bar{\tau}_m(M)\leq C_\tau e^{-c_a M}\) for any \(c_a<\kappa\) with some 
\(C_\tau=C(V,\kappa,K,c_a)\). Consequently, 
\(d_m(M)\lesssim \bar{\tau_m}(M)\leq C_\tau e^{-c_a M}\) for \(M\) large enough. Finally, observe that, for all \(y\geq 0\),
\begin{multline*}
q_0(v_\star+y)
=a_0\exp\left\{
-\int_0^y \Xi_0(q_0(v_\star+u))\,du
\right\}\\
=a_0\exp\left\{
-\Xi_0(0)y
-\int_0^y \left(
\Xi_0(q_0(v_\star+u))-\Xi_0(0)
\right)\,du
\right\}\\
\leq C(a_0,V,\kappa,K,L)e^{-\Xi_0(0)y}
\end{multline*}
and
\begin{multline*}
\log\frac{q_0(v_\star+y)}{q_\xi(v_\star+y)}
=\log\frac{a_0}{a_\xi}
-\int_0^y \left(
\Xi_0(q_0(v_\star+u))
-\xi(q_\xi(v_\star+u))
\right)\,du\\
\leq \log\frac{a_0}{a_\xi}
+(K-\kappa)y.
\end{multline*}
It follows that
\begin{multline*}
    \bar{R}_m(M)
    \leq C_0(a_0,V,\kappa,K,L)
    \int_{\{V(x)>M\}}
    \left(
        \log\frac{a_+}{a_-}
        +(K-\kappa)(V(x)-v_\star)
    \right)\\
    {}\times
    e^{-\Xi_0(0)(V(x)-v_\star)}\,dx
    \leq
    C_{R,\ve}
    e^{-(\Xi_0(0)-\ve)(M-v_\star)},
\end{multline*}
for every fixed \(0<\ve<\Xi_0(0)\), where
\[
C_{R,\ve}
:=
C_0(a_0,V,\kappa,K,L)
\int_{\mathbb R^d}
\left(
    \log\frac{a_+}{a_-}
    +(K-\kappa)(V(x)-v_\star)
\right)
e^{-\ve(V(x)-v_\star)}\,dx
<\infty
\]
by Assumption~\ref{ass:potential}. Equivalently,
\(\limsup_{M\to\infty} \log\bar{R}_m(M)/(M-v_\star)\leq -\Xi_0(0)\).
\end{proof}

\begin{lemma}
\label{lem:holder-besov-interpolation}
For any bounded interval $J$ and $\beta>0$ fixed, there exists a constant $C_{\beta,J}$ such that, for all $F\in L^2(J)\cap\mathcal H^{\beta+1}(J)$,
\begin{equation}\label{eq:holder-besov-interpolation}
 \norm{F'}_{L^2(J)} \leq C_{\beta,J} \norm{F}_{L^2(J)}^{\beta/(\beta+1)} \norm{F}_{\mathcal H^{\beta+1}(J)}^{1/(\beta+1)}.
\end{equation}
\end{lemma}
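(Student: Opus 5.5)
This is a Gagliardo--Nirenberg type interpolation inequality between $L^2(J)$ and the Hölder space $\mathcal H^{\beta+1}(J)$. The plan is to prove it by a scale-splitting argument: approximate $F'$ by a smoothed quantity whose size is controlled by $\norm{F}_{L^2(J)}$ at scale $h$, bound the remainder through the Hölder norm, and then optimize over $h$.

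Write $\beta+1=k+\alpha$ with $k=\lceil\beta+1\rceil-1\ge1$ and $\alpha\in(0,1]$. Set $A:=\norm{F}_{L^2(J)}$ and $B:=\norm{F}_{\mathcal H^{\beta+1}(J)}$. Since $A\le|J|^{1/2}\norm{F}_\infty\le|J|^{1/2}B$, the ratio $A/B$ is bounded above. If $A/B$ is bounded below by a fixed constant $c_J$, the claim is immediate from $\norm{F'}_{L^2(J)}\le|J|^{1/2}B$ together with $B\lesssim A^{\beta/(\beta+1)}B^{1/(\beta+1)}$. So assume $A/B$ is small and fix $h:=(A/B)^{1/(\beta+1)}$, which is below $|J|/(2k+2)$.

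The key tool is a local polynomial projection on cells of length $h$. Partition $J$ into $N\asymp|J|/h$ intervals $J_i$ of length comparable to $h$. On each $J_i$, let $P_i$ be the $L^2(J_i)$-orthogonal projection of $F$ onto polynomials of degree $\le k$. This gives two estimates.
\begin{itemize}
\item \emph{Remainder.} By Taylor's theorem with the Hölder modulus of $F^{(k)}$, the projection error satisfies $\norm{(F-P_iF)'}_{L^\infty(J_i)}\lesssim Bh^{\beta}$. This uses comparison with the Taylor polynomial $T$ at the cell's left endpoint, the fact that $\norm{(F-T)'}_{\infty}\lesssim Bh^{\beta}$, and the fact that on polynomials of fixed degree the $L^2$ and $L^\infty$ norms of derivatives on an interval of length $h$ are equivalent after the usual rescaling.
\item \emph{Projected part.} By the Markov inverse inequality for polynomials of degree $\le k$ rescaled to length $h$, $\norm{(P_iF)'}_{L^2(J_i)}\lesssim h^{-1}\norm{P_iF}_{L^2(J_i)}\le h^{-1}\norm{F}_{L^2(J_i)}$.
\end{itemize}

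Squaring and summing over the cells gives
\[
\norm{F'}_{L^2(J)}\lesssim h^{-1}A+|J|^{1/2}Bh^{\beta}.
\]
With $h=(A/B)^{1/(\beta+1)}$, both terms equal $A^{\beta/(\beta+1)}B^{1/(\beta+1)}$ up to constants depending only on $\beta$ and $J$. This is exactly \eqref{eq:holder-besov-interpolation}.

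The main technical step is the uniform $L^\infty$ bound on $(F-P_iF)'$. I would first handle it by writing $F-P_iF=(F-T)-P_i(F-T)$, where $T$ is the Taylor polynomial, which is valid because $P_i$ reproduces polynomials. The first piece is bounded by the Hölder estimate. The second is bounded by combining $L^2$-stability of $P_i$ with the Markov inequality: $\norm{(P_i(F-T))'}_\infty\lesssim h^{-1}\norm{F-T}_\infty\lesssim h^{-1}Bh^{\beta+1}$. Since all constants scale correctly, they depend only on $k$ and $|J|$, which is what fixes $C_{\beta,J}$.
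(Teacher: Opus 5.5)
Your proof is correct, and it takes a different route from the paper's. The paper first maps \(J\) to the unit interval. It then extends \(F\) to a compactly supported \(\widetilde F\) on \(\mathbb R\), using an extension operator bounded simultaneously in \(L^2\) and \(\mathcal H^{\beta+1}\), and splits \(\widetilde F'=(\widetilde F'-\eta_\varepsilon\star\widetilde F')+\eta_\varepsilon\star\widetilde F'\) with a kernel \(\eta\) having vanishing moments. The bias term is \(O(\varepsilon^\beta\|F\|_{\mathcal H^{\beta+1}})\) uniformly, by Taylor expansion and moment cancellation. The smoothed term is rewritten as \(\eta_\varepsilon'\star\widetilde F\) and bounded by Young's inequality by \(\varepsilon^{-1}\|F\|_{L^2}\). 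Optimizing over \(\varepsilon\) then gives the inequality.

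You replace the global mollifier with local \(L^2\)-projections onto polynomials of degree \(\le k\) on a partition of \(J\) at scale \(h\). The Markov inverse inequality plays the role of \(\|\eta_\varepsilon'\|_{L^1}\lesssim\varepsilon^{-1}\), and the fact that \(P_i\) reproduces polynomials plays the role of the moment conditions on \(\eta\).

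What your version buys:
\begin{itemize}
\item It is intrinsic to \(J\) and needs no extension operator, which is the one non-elementary ingredient the paper invokes without proof.
\item It handles explicitly, by your case split, the regime \(A/B\gtrsim1\), where the scale \(h\) would not fit inside \(J\). The paper covers this only implicitly, through the restriction \(\varepsilon\in(0,1]\) after rescaling to unit length.
\end{itemize}
What the paper's version buys: mollification is the more standard template, and it carries over directly to \(\mathbb R^d\) or to Besov-type norms once an extension is available.

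The only loose end in your write-up is the degenerate case \(A=0\), where \(h=0\). There \(F\equiv0\) by continuity, so the inequality holds trivially.
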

\begin{proof}
First map \(J\) to the unit interval \((0,1)\) and extend \(F\) by a bounded extension operator to a compactly supported function \(\widetilde{F}\) on \(\R\) such that \(\Vert \widetilde{F}\Vert_{L^2(\R)}\leq C_J\Vert F\Vert_{L^2(J)}\) and \(\Vert \widetilde{F}\Vert_{\mathcal H^{\beta+1}(\R)}\leq C_{\beta,J}\Vert F\Vert_{\mathcal H^{\beta+1}(J)}\) for some constant \(C>0\). Choose a smooth kernel \(\eta\) integrating to one and satisfying \(\int_{\R} y^j\eta(y)\,dy=0\) for \(j=1,\dots,\lceil\beta\rceil-1\); note that for \(\beta>2\), this kernel is signed. For \(\ve\in(0,1]\) define 
\(\eta_{\ve}(x):=\ve^{-1}\eta(x/\ve)\) and decompose \(\widetilde{F}'=(\widetilde{F}'-\eta_\ve\star\widetilde{F}')+\eta_\ve\star\widetilde{F}'\). Since \(\widetilde{F}'\in\mathcal{H}^\beta\), for all \(x\in\R\) it holds that
\begin{multline*}
    \widetilde{F}'(x)
    -\eta_{\ve}\star\widetilde{F}'(x)
    =-\int_{\R} \eta_{\ve}(y)\left(
    \sum_{j=1}^{\lceil\beta\rceil-1}
    \frac{(-y)^j}{j!}
    (\widetilde{F}')^{(j)}(x)
    +R_{\beta}(x,y)
    \right)\,dy\\
    =-\int_{\R} \eta_{\ve}(y)R_\beta(x,y)\,dy,
\end{multline*}
the last equality being due to the moment cancellation of \(\eta\), where \(R_\beta\) is the Taylor remainder satisfying \(|R_\beta(x,y)|\leq C|y|^\beta \Vert \widetilde{F}'\Vert_{\mathcal{H}^\beta(\R)}\). Hence, \(\Vert \widetilde{F}'-\eta_\ve\star\widetilde{F}'\Vert_\infty\leq C\ve^\beta \Vert \widetilde{F}'\Vert_{\mathcal{H}^\beta}(\R)\), and, since \(\widetilde{F}\) and \(\eta\) are both compactly supported, 
\(\Vert \widetilde{F}'-\eta_\ve\star \widetilde{F}'\Vert_{L^2(\R)}
\leq C\ve^\beta\Vert \widetilde{F}\Vert_{\mathcal{H}^{\beta+1}(\R)}
\leq CC_{\beta,J} 
\ve^\beta\Vert F\Vert_{\mathcal{H}^{\beta+1}(J)}
\). 

For the smoothed part, by Young's convolution inequality it follows that
\[
\Vert \eta_\ve\star\widetilde{F}'\Vert_{L^2(J)}
\leq\Vert \eta_\ve'\star\widetilde{F}\Vert_{L^2(\R)}
\leq \Vert\eta'_\ve\Vert_{L^1(\R)}
\Vert \widetilde{F}\Vert_{L^2(\R)}
\leq C_\eta C_J\ve^{-1}\Vert F\Vert_{L^2(J)}.
\]
Combining the two estimates, we get for some \(C_{\beta,J,\eta}>0\),
\[
\Vert F'\Vert_{L^2(J)}
\leq C_{\beta,J,\eta}
\left(
\ve^\beta \Vert F\Vert_{\mathcal{H}^{\beta+1}(J)}
+\ve^{-1}\Vert F\Vert_{L^2(J)}
\right).
\]
The choice 
\(\ve:=(\Vert F\Vert_{L^2(J)}/\Vert F\Vert_{\mathcal{H}^{\beta+1}(J)})^{1/(\beta+1)}
\)
for \(F\neq 0\) yields the result.
\end{proof}

\begin{lemma}
\label{lem:exact-kl-modulus}
Suppose Assumptions~\ref{ass:potential} and~\ref{ass:coefficient} hold, and assume \(\kappa\leq\xi\leq K\) and \(\Vert \xi\Vert_{\mathcal{H}^{\widetilde{\beta}}([0,U])}+\Vert\Xi_0\Vert_{\mathcal{H}^{\widetilde{\beta}}([0,U])}\leq \widetilde{H}\) for some \(\widetilde{\beta}>0\). 
Let \(I=[r_-,r_+]\Subset(0,a_0)\) and assume that the mode satisfies \(a_\xi \ge r_+ + \eta_I\) for some fixed \(\eta_I>0\). Finally, suppose that Assumption~\ref{ass:regular-interior} holds for \(\xi_\circ=\Xi_0\).
Then 
\begin{equation}\label{eq:exact-kl-modulus}
 \norm{\xi-\Xi_0}_{L^2(I)} \leq C \KL(\pi_0\Vert\pi_\xi)^{\widetilde{\beta}/(2(\widetilde{\beta}+1))}
\end{equation}
for some constant 
\(
C:=C(I,V,\kappa,K,\widetilde{\beta},\widetilde{H})>0
\). 
\end{lemma}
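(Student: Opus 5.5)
We will prove Lemma~\ref{lem:exact-kl-modulus} by working in the inverse-profile coordinates $s_\xi(r)=\mu_\xi-g_\xi(r)$. The target is to bound $F:=s_\xi-s_0$ in $L^2$ on a slightly enlarged interval $I'\Supset I$ by a power of the KL divergence, and then pass to the coefficient through $s_\xi'(r)=-\rho_\xi(r)$ combined with Lemma~\ref{lem:holder-besov-interpolation}. The reason this route works is the identity
\[
\xi(r)-\Xi_0(r)=\frac{\rho_0(r)-\rho_\xi(r)}{r\rho_\xi(r)\rho_0(r)}=r\,\xi(r)\Xi_0(r)\,F'(r),
\]
which holds because $F'=\rho_0-\rho_\xi$. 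Since $r\le U$ and $\xi,\Xi_0\le K$, this gives $\norm{\xi-\Xi_0}_{L^2(I)}\le UK^2\norm{F'}_{L^2(I')}$. Moreover, $F'=\rho_0-\rho_\xi$ is $\widetilde\beta$-H\"older on $I'\subset[r_-/2,U]$, because $r\mapsto (r\xi(r))^{-1}$ is a smooth function of $\xi$ away from $r=0$. Hence $\norm{F}_{\mathcal H^{\widetilde\beta+1}(I')}\le C(I,\kappa,K,\widetilde H)$, where the sup-norm part of $F$ is bounded using the envelopes \eqref{eq:exp-envelope_gen} on $\mu_\xi,\mu_0$. Lemma~\ref{lem:holder-besov-interpolation} then gives $\norm{F'}_{L^2(I')}\lesssim\norm{F}_{L^2(I')}^{\widetilde\beta/(\widetilde\beta+1)}$. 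It therefore suffices to show $\norm{F}_{L^2(I')}^2\lesssim \KL(\pi_0\Vert\pi_\xi)$.

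To get this $L^2$ bound, first use Pinsker or, better, the Hellinger bound $H^2(\pi_0,\pi_\xi)\le \KL(\pi_0\Vert\pi_\xi)$. Reducing to energies as in Subsection~\ref{subsec:mle}, $H^2$ equals $\int(\sqrt{q_0}-\sqrt{q_\xi})^2m_V\,ds$. Restrict this integral to the compact energy interval $S:=s_0(I')$. After enlarging $I$ slightly, Assumption~\ref{ass:regular-interior} keeps $S$ inside a region free of critical values, where $m_V\ge c_{V,I}$ by Remark~\ref{rem:coarea-bounds}. On $S$ both $q_0$ and $q_\xi$ are bounded below by the envelope, so $(\sqrt{q_0}-\sqrt{q_\xi})^2\asymp (\log q_0-\log q_\xi)^2$ wherever $|\log(q_\xi/q_0)|$ is bounded, and it is bounded from below otherwise. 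This yields $\int_S|q_\xi-q_0|^2ds\lesssim \KL$, possibly in the truncated form $\min(|q_\xi-q_0|^2,c)$, which we handle by a two-case argument.

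The next step converts the profile gap into the inverse-profile gap. For $r\in I'$ we have $q_0(s_0(r))=r=q_\xi(s_\xi(r))$, so
\[
q_\xi(s_0(r))-q_0(s_0(r))=q_\xi(s_0(r))-q_\xi(s_\xi(r)),
\]
and $|q_\xi'|=q_\xi\xi(q_\xi)\ge\kappa\, r_-/2$ on the relevant range gives $|F(r)|\lesssim|q_\xi(s_0(r))-q_0(s_0(r))|$. For this we need $s_\xi(r)$ to be defined, which means $r<a_\xi$; this is exactly what the mode hypothesis $a_\xi\ge r_++\eta_I$ supplies, and it also keeps $q_\xi$ away from $a_\xi$. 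Changing variables $s=s_0(r)$, with $|s_0'|=\rho_0\asymp 1$ on $I'$, then gives $\norm{F}_{L^2(I')}^2\lesssim\int_S|q_\xi-q_0|^2ds\lesssim\KL$.

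I expect the main obstacle to be the linearization in the Hellinger step. When $\KL$ is not small, the gap $|\log(q_\xi/q_0)|$ on $S$ need not be small, and the truncated bound does not directly give the quadratic estimate. Two facts resolve this. First, if $\KL$ exceeds a fixed constant, the claim is trivial: $\norm{\xi-\Xi_0}_{L^2(I)}\le (K-\kappa)|I|^{1/2}$, so the constant $C$ can absorb that case. Second, for small $\KL$, $|\log(q_\xi/q_0)|$ is uniformly Lipschitz on $S$, since $(\log q)'=-\xi(q)$ is bounded. A function whose truncated $L^2$ norm is small and whose Lipschitz constant is bounded must have small sup norm, which removes the truncation. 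A secondary technical point is checking that the constants of the extension step in Lemma~\ref{lem:holder-besov-interpolation} depend only on $I'$, $\widetilde\beta$ and $\widetilde H$.
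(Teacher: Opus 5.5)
Your proposal is correct and follows essentially the paper's route: the mean-value step $|s_\xi-s_0|\lesssim|q_\xi-q_0|\circ s_0$, the change of variables $s=s_0(r)$, and a quadratic lower bound on the KL integrand over the critical-value-free energy window where $m_V\ge c_{V,I}$, followed by Lemma~\ref{lem:holder-besov-interpolation} applied to $F=s_\xi-s_0$. The ``main obstacle'' you anticipate does not arise: since $q_0,q_\xi\le a_+$ everywhere by \eqref{eq:exp-envelope_gen}, one has pointwise $(\sqrt{q_0}-\sqrt{q_\xi})^2\ge(q_0-q_\xi)^2/(4a_+)$ (the paper uses the analogous bound $u\log(u/v)-u+v\ge(u-v)^2/(2a_+)$ for $u,v\in(0,a_+]$), so the truncation, Lipschitz and two-case arguments are unnecessary, and so is enlarging $I$.
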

\begin{proof}
Due to the Gaussian envelope~\eqref{eq:ou-envelope}, and since the mode strictly satisfies \(a_\xi \ge r_+ + \eta_I\) for a fixed margin \(\eta_I>0\) there exists \(T_I\Subset(v_\star,\infty)\) such that \(s_\xi(I)\cup s_0(I)\subseteq T_I\). The uniform margin \(\eta_I\) securely detaches \(T_I\) from the minimum-energy geometric degeneracy. On \(T_I\), the profiles satisfy \(0<c_I\leq q_\xi(s)\leq C_I\), which implies that their derivatives are bounded uniformly away from zero and infinity due to
\begin{equation}
  c_I\kappa \leq -q_\xi'(s) = q_\xi(s)\xi(q_\xi(s)) \leq C_I K,
  \quad \forall\,s\in T_I.
\end{equation}
Since \(q_\xi(s_\xi(r))=r=q_0(s_0(r))\), we get by the mean value theorem
\begin{equation}
  |s_\xi(r)-s_0(r)| \leq \frac{1}{c_I\kappa} |q_\xi(s_\xi(r))-q_\xi(s_0(r))| = \frac{1}{c_I\kappa} |q_0(s_0(r))-q_\xi(s_0(r))|,
\end{equation}
from which
\begin{multline*}
\Vert s_\xi-s_0\Vert_{L^2(I)}^2
\leq \frac{1}{(c_I\kappa)^2}
\int_I 
|q_\xi(s_0(r))-q_0(s_0(r))|^2\,dr\\
=\frac{1}{(c_I\kappa)^2} 
\int_{s_0(r_+)}^{s_0(r_-)}
|q_\xi(s)-q_0(s)|^2
|q'_0(s)|\,ds
\leq \frac{C_I K}{(c_I\kappa)^2} 
\Vert q_\xi-q_0\Vert_{L^2(s_0(I))}^2.
\end{multline*}
Now observe that
\begin{equation}
  \KL(\pi_0\Vert\pi_\xi) = \int_{v_\star}^\infty m_\V(s) \left[ q_0(s)\log\frac{q_0(s)}{q_\xi(s)} - q_0(s) + q_\xi(s) \right] \dd s \geq \frac{c_{I,V}}{2a_+}\norm{q_\xi-q_0}_{L^2(S_I)}^2
\end{equation}
by the generic inequality \(u\log(u/v)-u+v\geq (u-v)^2/(2a_+)\) for all \(u,v\in (0,a_+]\). By proven above, it follows that \(\Vert s_\xi-s_0\Vert_{L^2(I)}^2\leq \widetilde{C}_{I,V} \KL(\pi_0\Vert\pi_\xi)\) for some \(\widetilde{C}_{I,V}>0\). Moreover,
\[
\Vert \xi-\Xi_0\Vert_{L^2(I)} 
=\left(\int_I 
\left|
r\xi(r)\Xi_0(r)(s_\xi'(r)-s_0'(r))
\right|^2\,dr
\right)^{1/2}
\leq r_+ K^2\Vert s_\xi'-s_0'\Vert_{L^2(I)},
\]
and by the assumptions made \(\Xi_0,\xi\in\mathcal{H}^{\widetilde{\beta}}([0,U],\widetilde{H})\) with some \(\widetilde{\beta}>0\), which means that \((s_\xi'-s_0')\in\mathcal{H}^{\widetilde{\beta}}(I)\) as \(r\geq r_->0\). Applying Lemma~\ref{lem:holder-besov-interpolation} of Supplementary Material to \(F:=s_\xi-s_0\), we then conclude that 
\[ 
\Vert F'\Vert_{L^2(I)}\leq \bar{C}_{I,V} \Vert F\Vert_{L^2(I)}^{\widetilde{\beta}/(\widetilde{\beta}+1)}\Vert F\Vert_{\mathcal{H}^{\widetilde{\beta}+1}(I)}^{1/(\widetilde{\beta}+1)}
\] for \(\bar{C}_{I,V}>0\) some constant, where the first norm to the right-hand side is bounded by \(\sqrt{\KL(\pi_0\Vert\pi_\xi)}\) and the second by some finite constant, giving the claim.
\end{proof}

The following lemma guarantees the existence of the mode margin \(\eta_I\) required by Lemma~\ref{eq:exact-kl-modulus} under the considered assumptions.
\begin{lemma}
\label{lem:mode-localization}
Under assumptions~\ref{ass:potential} and~\ref{ass:coefficient}, every candidate density satisfies
\begin{equation}\label{eq:mode-localization}
 |a_\xi-a_0| \leq 
 C_{d,K,V}
 \norm{\pi_\xi-\pi_0}_{L^1(\R^d)}^{2/(d+2)} 
 \leq 2^{1/(d+2)}C_{d,K,V}
 \KL(\pi_0\Vert\pi_\xi)^{1/(d+2)}
\end{equation}
with some constant 
\(C_{d,K,V}=C(d,K,V)>0\).
Thus, for every 
\(I=[r_-,r_+]\Subset(0,a_0)\), 
setting \(\eta_I=(a_0-r_+)/2\), there exists \(\varepsilon_{I,K,V,a_0}>0\) such that the inequality 
\(\KL(\pi_0\Vert\pi_\xi)
\leq\varepsilon_{I,K,V,a_0}\) 
implies \(a_\xi \ge r_+ + \eta_I\).
\end{lemma}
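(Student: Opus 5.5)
The argument splits into a deterministic localization bound for the mode, $|a_\xi-a_0|\lesssim\|\pi_\xi-\pi_0\|_{L^1}^{2/(d+2)}$, obtained from a uniform Lipschitz bound on both densities, followed by Pinsker's inequality.

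\emph{Step 1: uniform gradient bound.} By the zero-flux identity $\nabla\pi_\xi=-\pi_\xi\,\xi(\pi_\xi)\nabla V$ and the envelope~\eqref{eq:exp-envelope_gen}, we have $|\nabla\pi_\xi(x)|\le K Z_K^{-1}e^{-\kappa(V(x)-v_\star)}\|\nabla V(x)\|_2$. Near the minimizer set of $V$ this is bounded by continuity. Assuming the bound also holds uniformly over the whole space (which needs a mild growth condition relating $\nabla V$ to $e^{\kappa V}$, and holds for instance for the OU potential), both $\pi_\xi$ and $\pi_0$ are Lipschitz with a constant $L_{K,V}$ independent of $\xi$. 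In fact only a neighbourhood of the argmin set is needed, since the mode is attained there.

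\emph{Step 2: modes versus $L^1$ distance.} Both maxima are attained on the set $\{V=v_\star\}$, so take $x_\star$ with $V(x_\star)=v_\star$. Then $\pi_\xi(x_\star)=a_\xi$ and $\pi_0(x_\star)=a_0$. Put $\delta:=|a_\xi-a_0|$ and, without loss of generality, $a_\xi>a_0$. On the ball $B(x_\star,\rho)$ with $\rho=\delta/(4L_{K,V})$ the Lipschitz bounds give
\[
\pi_\xi-\pi_0\ge \delta-2L_{K,V}\rho=\delta/2 .
\]
Integrating over the ball,
\[
\|\pi_\xi-\pi_0\|_{L^1}\ge \frac{\delta}{2}\,\omega_d'\Big(\frac{\delta}{4L_{K,V}}\Big)^d\asymp\delta^{d+1},
\]
where $\omega_d'$ denotes the volume of the unit ball. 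This yields the exponent $1/(d+1)$. To reach the stated exponent $2/(d+2)$ I would instead use second-order control near the common maximizer. There $\nabla\pi=0$, and by $V\in C^2$ together with the profile ODE, $a_\xi-\pi_\xi(x)\le C(V(x)-v_\star)\le C'|x-x_\star|^2$ uniformly in $\xi$. Comparing $\pi_\xi$ with the level $a_0$ on a ball of radius $\asymp\delta^{1/2}$ then gives $\|\pi_\xi-\pi_0\|_{L^1}\gtrsim\delta\cdot\delta^{d/2}=\delta^{(d+2)/2}$. This is exactly the claimed exponent, so the quadratic (Hessian) bound is the intended route, with $C_{d,K,V}$ depending on $\sup\|\nabla^2V\|$ near the argmin and on $K$.

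The symmetric case $a_0>a_\xi$ is identical, using the quadratic bound for $\pi_0$ instead. Note that $\pi_0\le a_0$ everywhere, because $\pi_0=q_0(V)$ with $q_0$ decreasing; similarly $\pi_\xi\le a_\xi$.

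\emph{Step 3: Pinsker and the margin.} Pinsker's inequality gives $\|\pi_\xi-\pi_0\|_{L^1}\le\sqrt{2\KL(\pi_0\Vert\pi_\xi)}$. Raising to the power $2/(d+2)$ yields
\[
|a_\xi-a_0|\le 2^{1/(d+2)}C_{d,K,V}\,\KL(\pi_0\Vert\pi_\xi)^{1/(d+2)},
\]
which is the second inequality in~\eqref{eq:mode-localization}. For the margin, set $\eta_I=(a_0-r_+)/2$ and choose $\varepsilon_{I,K,V,a_0}$ so that $2^{1/(d+2)}C_{d,K,V}\,\varepsilon^{1/(d+2)}\le\eta_I$. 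Then $\KL(\pi_0\Vert\pi_\xi)\le\varepsilon$ implies $a_\xi\ge a_0-\eta_I=r_++\eta_I$.

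The main obstacle is the uniform second-order bound near the minimizer, $a_\xi-\pi_\xi(x)\lesssim|x-x_\star|^2$ with a constant independent of $\xi$. It follows from $0\le a_\xi-q_\xi(s)\le a_\xi K(s-v_\star)$ combined with $V(x)-v_\star\le\frac12\sup\|\nabla^2V\|\,|x-x_\star|^2$ locally, which Assumption~\ref{ass:potential} ($V\in C^2$) supplies.
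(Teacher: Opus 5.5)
Your final quadratic route is correct and is essentially the paper's argument. The ODE bound $a_\xi-\pi_\xi(x)\le Ka_+(V(x)-v_\star)$, together with $\pi_0\le a_0$, gives a gap of at least $\delta/2$ on a sublevel set of $V$; the local Hessian bound shows this set contains a ball of radius $\asymp\delta^{1/2}$, and Pinsker finishes. The Lipschitz detour in Step 1, with its extra growth condition, is unnecessary. The only detail to make explicit is that this radius stays inside the neighbourhood where $\nabla^2V$ is bounded; this follows from $\delta\le a_+$, which is exactly what the paper arranges through its choice of $C_V$.
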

\begin{proof}
Observe that by~\eqref{eq:q-ode-general}, it holds that
\begin{multline*}
|\pi_\xi(x)-a_\xi|
=|q_\xi(V(x))-q_\xi(v_\star)|
=\left|
-\int_{v_\star}^{V(x)}
q'_\xi(s)\,ds
\right|\\
=\left|
\int_{v_\star}^{V(x)}
q_\xi(s)\xi(q_\xi(s))\,ds
\right|
\leq Ka_+(V(x)-v_\star).
\end{multline*}
Hence, \(a_\xi\geq a_0\) implies
\(
\pi_\xi(x)-\pi_0(x)
\geq a_\xi-Ka_+(V(x)-v_\star)-a_0,
\)
and reverting the roles of \(a_\xi\) and \(a_0\), we arrive at \(|\pi_\xi(x)-\pi_0(x)|\geq \Delta_\xi/2\) with \(\Delta_\xi:=|a_\xi-a_0|\), provided that \(V(x)-v_\star\leq \Delta_\xi/(2Ka_+)\). Integrating the inequality, we then get 
\begin{equation}\label{eq:Lebh}
    \Vert \pi_\xi-\pi_0\Vert_{L^1(\R^d)}
    \geq \frac{\Delta_\xi}{2}
    \nu_V\left(
    \frac{\Delta_\xi}{2Ka_+}
    \right),
\end{equation}
where 
\[
\nu_V(h)
    :=\operatorname{Leb}(\{
    x\in\R^d\colon
    V(x)-v_\star\leq h
    \}),
    \, h\geq 0.
\]
Note that by Assumption~\ref{ass:potential}, since \(V\) is coercive, it attains its minimum, and since \(V\in C^2\), for a fixed radius \(\rho_\star>0\) it holds that
\[C_V:=\max\left( 
\sup_{y\in \overline{B(x_\star,\rho_\star)}}\Vert \nabla^2 V(y)\Vert_{\text{op}},\,
(K\rho_\star^2)^{-1}
\right)
<\infty,\] where \(x_\star\) is the minimiser of \(V\). By Taylor's formula one then has, for all \(x\in B(x_\star, \rho_\star)\),
\[
V(x)-v_\star
=\int_0^1 (1-t)(x-x_\star)^\top 
\nabla^2 V(x_\star+t(x-x_\star))
(x-x_\star)\,dt
\leq \frac{C_V}{2}\Vert x-x_\star\Vert_2^2,
\]
from which it follows that \(B(x_\star, \sqrt{2h/C_V})\subseteq \{x\in\R^d\colon V(x)-v_\star\leq h\}\), provided \(h\leq C_V \rho_\star^2/2\). In turn, this implies that 
\(
\nu_V(h)
    \geq \omega_d \left(
    2h/C_V
    \right)^{d/2}
    \gtrsim h^{d/2}.
\)
Taking \(h=\Delta_\xi/(2Ka_+)\) and noting that by definition of \(C_V\) it holds that 
\(
2h/C_V=\Delta_\xi/(Ka_+C_V)
\leq Ka_+\rho_\star^2/(Ka_+)
=\rho_\star^2,
\)
i.e., \(h\leq C_V\rho_\star^2/2\) is satisfied,  we hence get for~\eqref{eq:Lebh}
\(
\Vert \pi_\xi-\pi_0\Vert_{L^1(\R^d)}
\geq (\omega_d/2)(Ka_+C_V)^{-d/2}
\Delta_\xi^{1+d/2}.
\)
Pinsker's inequality yields the second inequality in~\eqref{eq:mode-localization}. Taking \(\ve_{I,K,V, a_0}:=0.5(\eta_I/C_{d,K,V})^{d+2}\), we get that the condition 
\(\KL(\pi_0\Vert\pi_\xi)
\leq \ve_{I,K,V, a_0}\) with 
\(\eta_I\) taken as in the formulation of the lemma implies that \(|a_\xi-a_0|\leq\eta_I\), and in particular, \(a_\xi\geq a_0-\eta_I=r_+\eta_I\). 
\end{proof}

\bibliographystyle{plainnat}
\bibliography{xi_mle_dnn_multivariate_holder_interpolation_corrected}
\end{document}